\newcommand\restr[2]{{
  \left.\kern-\nulldelimiterspace
  #1
  \vphantom{\big|}
  \right|_{#2}
  }}
\newcommand{\suchthat}{\;\ifnum\currentgrouptype=16 \middle\fi|\;}
\newcommand\bint[2]{{
  \left.\kern-\nulldelimiterspace
  \vphantom{\int}
  \right.^b \hspace{-0.25cm}\int_{#1}^{#2}
  }}
\newcommand\pgfmathsinandcos[3]{
  \pgfmathsetmacro#1{sin(#3)}
  \pgfmathsetmacro#2{cos(#3)}
}
\newcommand\LatitudePlane[4][current plane]{
  \pgfmathsinandcos\sinEl\cosEl{#2}
  \pgfmathsinandcos\sint\cost{#3}
  \pgfmathsetmacro\yshift{\cosEl*\sint}
  \tikzset{#1/.estyle={cm={\cost,0,0,\cost*\sinEl,(#4,\yshift)}}}
}
\newcommand\DrawLatitudeCircle[3][1]{
  \LatitudePlane{\angEl}{#2}{#3}
  \tikzset{current plane/.prefix style={scale=#1}}
  \pgfmathsetmacro\sinVis{sin(#2)/cos(#2)*sin(\angEl)/cos(\angEl)}

  \pgfmathsetmacro\angVis{asin(min(1,max(\sinVis,-1)))}
  \draw[current plane, thick, color = red] (\angVis:1) arc (\angVis:-\angVis-180:1);
  \draw[current plane,dashed, thick, color = red] (180-\angVis:1) arc (180-\angVis:\angVis:1);
}
\newcommand\DrawGenus[7]{
  \pgfmathsetmacro{\xstart}{#1 - (0.985*#4)}
  \pgfmathsetmacro{\ystart}{#2 + (0.2*#3)}

	\draw[color = #6, rotate around={#5:(#1,#2)}, #7] (\xstart, \ystart) arc (190:350:#4  and #3);
	\draw[color = #6, rotate around={#5:(#1,#2)}, #7] (\xstart, \ystart) arc (190:210:#4  and #3) arc (150:30:#4  and #3) arc (330:350:#4  and #3);
}
\newcommand\DrawDonut[7]{
  \pgfmathsetmacro{\fctr}{.08}
  \pgfmathsetmacro{\newwidth}{0.5*#4}
  \pgfmathsetmacro{\newheight}{0.5*#3}
  \draw[color = #6, rotate around={#5:(#1,#2)}, #7] (#1, #2) ellipse (#4  and #3);
  \DrawGenus{#1}{#2}{\newheight}{\newwidth}{#5}{#6}{#7}
}
\newcommand\DrawTwoDonut[7]{
  \pgfmathsetmacro{\fctr}{.08}
  \pgfmathsetmacro{\newwidth}{0.25*#4}
  \pgfmathsetmacro{\newheight}{0.25*#3}
  \draw[color = #6, rotate around={#5:(#1,#2)}, #7] (#1, #2) ellipse (#4  and #3);
  \pgfmathsetmacro{\xcoordstart}{#1 - (#4*.4)*cos(#5) - (0.05*#3)*sin(#5)}
  \pgfmathsetmacro{\ycoordstart}{#2 - (#4*.4)*sin(#5) - (0.05*#3)*cos(#5)}
  \DrawGenus{\xcoordstart}{\ycoordstart}{\newheight}{\newwidth}{#5}{#6}{#7}
  \pgfmathsetmacro{\xcoordstart}{#1 + (#4*.4)*cos(#5) - (0.05*#3)*sin(#5)}
  \pgfmathsetmacro{\ycoordstart}{#2 + (#4*.4)*sin(#5) - (0.05*#3)*cos(#5)}
  \DrawGenus{\xcoordstart}{\ycoordstart}{\newheight}{\newwidth}{#5}{#6}{#7}
}
\newtheorem{theorem}{Theorem}
\newtheorem{proposition}[theorem]{Proposition}
\newtheorem{lemma}[theorem]{Lemma}
\newtheorem{corollary}[theorem]{Corollary}
\theoremstyle{definition}
\newtheorem{definition}[theorem]{Definition}
\newtheorem{remark}[theorem]{Remark}
\newtheorem{example}[theorem]{Example}
\newtheorem{claim}[theorem]{Claim}
\newcommand{\CC}{{\mathbb C}}
\newcommand{\SSS}{{\mathbb S}}
\newcommand{\TT}{{\mathbb T}}
\newcommand{\cL}{{\mathcal L}}
\newcommand{\del}{\partial}
\begin{document}

\title{Toric actions on $b$-symplectic manifolds}

\author{Victor Guillemin}\address{Department of Mathematics, MIT, Cambridge MA, USA} \email{vwg@math.mit.edu}
\author{Eva Miranda}\address{Departament de Matem\`{a}tica Aplicada I, Universitat Polit\`{e}cnica de Catalunya, Barcelona, Spain}\email{eva.miranda@upc.edu}\thanks{Eva Miranda is partially supported by Ministerio de Econom\'{i}a y Competitividad project GEOMETRIA ALGEBRAICA, SIMPLECTICA, ARITMETICA Y APLICACIONES with reference: MTM2012-38122-C03-01/FEDER and by the European Science Foundation network CAST}
\author{Ana Rita Pires} \address{Department of Mathematics, Cornell University, Ithaca NY, USA}\email{apires@math.cornell.edu}\thanks{Ana Rita Pires had the partial support of an AMS-Simons Travel Grant.}
\author{Geoffrey Scott}\address{African Institute for Mathematical Sciences, Mbour, Senegal} \email{gsscott@aims-senegal.org}\thanks{Geoffrey Scott was partially supported by NSF RTG grant DMS-1045119 and by NSF RTG grant DMS-0943832.}

\date{\today}

\begin{abstract} We study Hamiltonian actions on $b$-symplectic manifolds with a focus on the effective case of half the dimension of the manifold. In particular, we prove a Delzant-type theorem that classifies these manifolds using polytopes that reside in a certain enlarged and decorated version of the dual of the Lie algebra of the torus.
\end{abstract}

\maketitle

\section{Introduction}

It is a well-known fact that the image of the moment map of a compact symplectic toric manifold is a Delzant polytope and that the symplectic manifold can be reconstructed from this polytope \cite{guilleminsternberg, guilleminsternberg2,delzant}.

In this paper we prove a similar theorem for a class of Poisson manifolds which is close to the symplectic class called \emph{$b$-symplectic manifolds}. These objects were first studied as manifolds with boundary in the works of Melrose \cite{book:melrose} and Nest and Tsygan \cite{nest}; recent treatments of the subject in \cite{guimipi11} and \cite{guimipi12} study the objects as manifolds with distinguished hypersurfaces. The symplectic groupoids integrating $b$-manifolds have been lately considered in \cite{gualtierili} and the topology of these manifolds has been studied further in \cite{cavalcanti}, \cite{frejlichmartinezmiranda} and \cite{marcutosorno1, marcutosorno2}. The $b$-symplectic manifolds that we consider are compact and have the property that the induced symplectic foliation on the exceptional hypersurface has compact leaves, the exceptional hypersurface in this case is a \emph{symplectic mapping torus} (see \cite{guimipi11}).

To define the moment map of a torus action on a $b$-manifold, we first enlarge the codomain $\mathfrak{t}^*$ to include points ``at infinity.'' The preimage of these points will be the exceptional hypersurface of the $b$-symplectic manifold. We will also assign $\mathfrak{t}^*$-valued weights to these points to encode certain geometric data, called the \emph{modular periods} of the components of the exceptional hypersurface. The definition of a Delzant polytope generalizes in a natural way to this enlarged codomain, giving the definition of a \emph{Delzant $b$-polytope}. The main theorem of this paper states that there is a bijection between $b$-symplectic toric manifolds and Delzant $b$-polytopes.

In contrast with classic symplectic geometry, the topology of the codomain of the moment map will depend on the $b$-manifold itself in two ways. First, the smooth structure on the codomain will depend on the modular periods of the exceptional hypersurfaces of the $b$-manifold. Second, in some cases the codomain will be contractible, and in other cases it will be topologically a circle.

This Delzant theorem allows us to classify all $2n$-dimensional $b$-symplectic toric manifolds into two categories. The first kind of $b$-symplectic toric manifold has as its underlying manifold $X_{\Delta} \times \mathbb{T}^2$, where $X_{\Delta}$ is any classic $(2n-2)$-dimensional symplectic toric manifold. The second kind of $b$-symplectic toric manifold is constructed from the manifold $X_{\Delta} \times \mathbb{S}^2$ by a sequence of symplectic cuts performed away from the exceptional hypersurface.

\section{Preliminary definitions and examples}

\subsection{$b$-objects, including $b$-functions}

We begin by recalling some of the notions introduced in detail in \cite{guimipi12}. A \textbf{$b$-manifold} is a pair $(M,Z)$ consisting of an oriented smooth manifold $M$ and a closed embedded hypersurface $Z$. A $b$-map $(M, Z) \rightarrow (M', Z')$ is an orientation-preserving map $f: M \rightarrow M'$ such that $f^{-1}(Z') = Z$ and $f$ is transverse to $Z'$. The sections of the \textbf{$b$-tangent bundle}, $^b TM$, are the vector fields on $M$ which at points of $Z$ are tangent to $Z$. The dual to this bundle is $^b T^*M$, the \textbf{$b$-cotangent bundle}. The sections of $\Lambda^k(^b T^*M)$ are called \textbf{$b$-de Rham $k$-forms} or simply \textbf{$b$-forms}. The space of all such forms is written $^b\Omega^k(M)$. The restriction of any $b$-form to $M \backslash Z$ is a classic differential form on $M \backslash Z$, and there is a differential $d: {^b}\Omega^k(M) \rightarrow {^b}\Omega^{k+1}(M)$ that extends the classic differential on $M \backslash Z$. With respect to this differential, we extend the standard definitions of closed and exact differential forms to \textbf{closed} $b$-forms and \textbf{exact} $b$-forms. A \textbf{$b$-symplectic} form is a closed $b$-form of degree 2 that has maximal rank (as a section of $\Lambda^2(^bT^*M)$) at every point of $M$. A $b$-symplectic manifold consists of the data of a $b$-manifold $(M, Z)$ and a $b$-symplectic form $\omega$. A \textbf{$b$-symplectomorphism} between two $b$-symplectic manifolds $(M,\omega)$ and $(M',\omega')$ is a $b$-map $\varphi:M\to M'$ such that $\varphi^*\omega'=\omega$.

Although a $b$-form can be thought of as a differential form with a singularity along $Z$, the singularity is so tame that it is possible to define the integral of a form of top degree by taking its principal value near $Z$.
\begin{definition}\label{def:lv}
For any $b$-form $\eta \in {^b}\Omega^{n}(M)$ on a $n$-dimensional $b$-manifold and any local defining function $y$ of $Z$, the \textbf{Liouville Volume} of $\eta$ is
\[
\bint{M}{}\eta := \lim_{\varepsilon \rightarrow 0} \int_{M \backslash \{-\varepsilon \leq y \leq \varepsilon\}} \eta
\]
\end{definition}
The fact that the limit in Definition \ref{def:lv} exists and is independent of $y$ is explained in \cite{radko} (for surfaces) and \cite{scott} (in the general case).

In \cite{guimipi12}, the authors prove that every $b$-form $\eta \in {^b}\Omega^p(M)$ can be written in a neighborhood of $Z = \{y = 0\}$ as
\[
\eta = \frac{dy}{y} \wedge \alpha + \beta
\]
for smooth forms $\alpha \in \Omega^{p-1}(M)$ and $\beta \in \Omega^p(M)$. Although the forms $\alpha$ and $\beta$ in this expression are not unique, the pullback $i_Z^*(\alpha)$ is unique, where $i_Z$ is the inclusion $Z \subseteq M$. The resulting differential form on $Z$ admits an alternative description: if $v$ is a vector field on $M$ such that $\restr{dy(v)}{Z} = 1$, then the vector field $\mathbb{L}:= yv$ is a $b$-vector field, $\restr{\mathbb{L}}{Z}$ doesn't depend on $v$ or $y$, the $b$-form $\iota_{\mathbb{L}}\eta$ is a smooth form, and $i_Z^*(\alpha) = i_Z^*\iota_{\mathbb{L}}\eta$. For this reason, we adopt the notation $\iota_{\mathbb{L}}\eta$ for this $(p-1)$-form on $Z$.

One can also study $b$-symplectic manifolds from the perspective of Poisson geometry: the dual of a $b$-symplectic form is a Poisson bivector whose top exterior product vanishes transversely (as a section of $\Lambda^{2n}(TM)$) at $Z$. Using these tools, we learn that $Z$ has a codimension-one symplectic foliation. To study this foliated hypersurface, we review the definition of the \textbf{modular vector field} on $M$.

\begin{definition}
Fix a volume form $\Omega$ on a $b$-symplectic manifold. The \textbf{modular vector field} $v^\Omega_\text{mod}$ on $M$ (or simply $v_{\textrm{mod}}$ if $\Omega$ is clear from the context) is the vector field defined by the derivation
\[
f\mapsto\frac{\mathcal{L}_{u_f}\Omega}{\Omega},
\]
where $u_f$ is the Hamiltonian vector field of $f$, defined by $df = \iota_{u_f}\omega$.
\end{definition}
Although the modular vector field depends on $\Omega$, different choices of $\Omega$ yield modular vector fields that differ by Hamiltonian vector fields. On a $b$-symplectic manifold, the modular vector field is tangent to the exceptional hypersurface $Z$ and its flow preserves the symplectic foliation of $Z$, and Hamiltonian vector fields are tangent to the symplectic foliation. In fact, in \cite{guimipi12} it is shown that corresponding to each modular vector field $v_{\textrm{mod}}$ and compact leaf $\mathcal{L}$ of a component $Z'$ of $Z$, there is a $k \in \mathbb{R}_{>0}$ and a symplectomorphism $f: \mathcal{L} \rightarrow \mathcal{L}$ such that $Z'$ is the mapping torus
\[
\frac{\mathcal{L} \times [0, k] }{(\ell, 0) \sim (f(\ell), k)}
\]
and the time-$t$ flow of $v_{\textrm{mod}}$ is translation by $t$ in the second coordinate. The number $k$, which depends only on the choice of component $Z' \subseteq Z$, is called the \textbf{modular period} of $Z'$. This definition generalizes the one given in \cite{radko} for $b$-symplectic surfaces.

Not all closed $b$-forms on a $b$-manifold are locally exact. For example, if $y$ is a local defining function for $Z$, then $\frac{dy}{y}$ is closed, but it is not exact in any neighborhood of any point of $Z$. Poincar\'{e}'s lemma is such a fundamental property of the (smooth) de Rham complex that we are motivated to enlarge the sheaf $C^{\infty}$ on a $b$-manifold to include functions such as $\log|y|$ so that we have a Poincar\'{e} lemma in $b$-geometry.

\begin{definition}
Let $(M, Z)$ be a $b$-manifold. The sheaf \footnote{If no global defining function for $Z$ exists (for example, if $Z$ is a meridian of $\mathbb{T}^2$), then this definition yields only a presheaf and ${^b}C^{\infty}$ is defined as its sheafification.} ${^b}C^{\infty}$ is defined by
\[
{^b}C^{\infty}(U) := \left\{c\log|y| + f \suchthat \begin{array}{l}c \in \mathbb{R}\\ y \ \textrm{is any defining function for} \ U \cap Z \subseteq U \\ f \in C^{\infty}(U)\end{array} \right\}
\]
Global sections of ${^b}C^{\infty}$ are called \textbf{$b$-functions}.
\end{definition}

Replacing $C^{\infty}$ with $^bC^{\infty}$ also enlarges the possible Hamiltonian torus actions on $b$-manifolds. In fact, in Corollary \ref{cor:notsmooth} we show that unless $Z = \emptyset$ there are no examples of effective Hamiltonian $\mathbb{T}^n$-actions on $2n$-dimensional $b$-symplectic manifolds with all their Hamiltonians in $C^{\infty}(M)$. We prove a simple relationship between the modular period and $b$-functions which will be useful in later sections.

\begin{proposition}\label{prop:propertiesofmodularperiod}
Let $(M, Z, \omega)$ be a $b$-symplectic manifold such that $Z$ is connected and has modular period $k$. Let $\pi: Z \rightarrow \SSS^1 \cong \mathbb{R}/k$ be the projection to the base of the corresponding mapping torus. Let $\gamma:\SSS^1 = \mathbb{R}/k \rightarrow Z$ be any loop with the property that $\pi\circ \gamma$ is the positively-oriented loop of constant velocity 1. The following numbers are equal:
\begin{itemize}
\item The modular period of $Z$.
\item $\int_{\gamma} \iota_{\mathbb{L}}\omega$.
\item The value $-c$ for any ${^b}C^{\infty}$ function $H = c\log|y|+f$ such that the Hamiltonian $X_H$ has 1-periodic orbits homotopic in $Z$ to some $\gamma$.
\end{itemize}
\begin{proof}
Recall from \cite{guimipi12} that $\iota_{\mathbb{L}}\omega (v_{\textrm{mod}})$ is the constant function 1. Let $s: [0,k] \rightarrow Z$ be a trajectory of the modular vector field. Because the modular period is $k$, $s(0)$ and $s(k)$ are in the same leaf $\mathcal{L}$ of the foliation. Let $\hat{s}: [0, k+1]\rightarrow Z$ be a smooth extension of $s$ such that $\restr{s}{[k, k+1]}$ is a path in $\mathcal{L}$ joining $\hat{s}(k) = s(k)$ to $\hat{s}(k+1) = s(0)$, making $\hat{s}$ a loop. Then
\[
k = \int_0^k 1 dt = \int_s \iota_{\mathbb{L}}\omega = \int_{\hat{s}} \iota_{\mathbb{L}}\omega = \int_{\gamma} \iota_{\mathbb{L}}\omega.
\]

Next, let $r: [0, 1] \mapsto Z$ be a trajectory of $X_H$, and notice that $X_H$ satisfies $\iota_{X_H}\omega = c \frac{dy}{y} + df$. Let $y\frac{\partial}{\partial y}$ be a representative of $\mathbb{L}$. Because $X_H$ is 1-periodic and homotopic to $\gamma$,
\[
k = \int_r \iota_{\mathbb{L}}\omega  = \int_0^1 \iota_{y\frac{\partial}{\partial y}}\omega(\restr{X_H}{r(t)}) dt = \int_0^1 \restr{-(c\frac{dy}{y} + df) ( y\frac{\partial}{\partial y})}{r(t)} dt = -c.
\]
\end{proof}
\end{proposition}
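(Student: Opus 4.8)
The plan is to establish the three-way equality by proving two links: first that the modular period equals $\int_\gamma \iota_{\mathbb{L}}\omega$, and then that $\int_\gamma \iota_{\mathbb{L}}\omega$ equals $-c$ for every admissible $H$. The ingredients I would rely on are (i) the identity $\iota_{\mathbb{L}}\omega(v_{\textrm{mod}}) \equiv 1$ recalled from \cite{guimipi12}; (ii) the fact that $\iota_{\mathbb{L}}\omega$ is a \emph{smooth closed} one-form on $Z$ whose kernel at each point is the tangent space to the symplectic leaf through that point (closedness follows from $d\omega = 0$ together with the mapping-torus structure of $Z$), so that $\int_{(\cdot)}\iota_{\mathbb{L}}\omega$ depends only on the class of a loop in $H_1(Z)$ and vanishes on loops contained in a single leaf; and (iii) the normal forms $\omega = \frac{dy}{y}\wedge\alpha + \beta$ near $Z$ and $dH = c\frac{dy}{y} + df$ when $H = c\log|y| + f$.

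For the first link I would take a trajectory $s\colon[0,k]\to Z$ of the modular vector field, $k$ the modular period. Since $v_{\textrm{mod}}$ is translation in the $\mathbb{R}/k$-coordinate of the mapping torus, $s(k)$ lies in the same leaf $\mathcal{L}$ as $s(0)$, so I can close $s$ up to a loop $\hat{s}$ by appending a path inside $\mathcal{L}$. That appended arc contributes nothing to $\int \iota_{\mathbb{L}}\omega$ by (ii), and $\hat{s}$ projects to the positively oriented unit-speed loop downstairs, so it differs from $\gamma$ by a loop inside a leaf; hence, using (ii) again, $\int_\gamma \iota_{\mathbb{L}}\omega = \int_{\hat{s}}\iota_{\mathbb{L}}\omega = \int_s \iota_{\mathbb{L}}\omega = \int_0^k \iota_{\mathbb{L}}\omega(v_{\textrm{mod}})\,dt = k$.

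For the second link, suppose $H = c\log|y| + f$ has a $1$-periodic orbit $r\colon[0,1]\to Z$ of $X_H$ homotopic in $Z$ to $\gamma$; recall $X_H$ is tangent to $Z$, so this is meaningful. From $\iota_{X_H}\omega = dH$ I get $\iota_{\mathbb{L}}\omega(X_H) = \omega(\mathbb{L},X_H) = -dH(\mathbb{L})$, and evaluating $dH = c\frac{dy}{y}+df$ on the representative $\mathbb{L} = y\frac{\partial}{\partial y}$ gives $dH(\mathbb{L}) = c + y\,\partial_y f$, which restricts to the constant $c$ along $r$ because $r(t)\in Z = \{y=0\}$. Hence, using homotopy invariance from (ii), $\int_\gamma \iota_{\mathbb{L}}\omega = \int_r \iota_{\mathbb{L}}\omega = \int_0^1 (-c)\,dt = -c$.

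The two integral computations are routine; the only real content is ingredient (ii) — the closedness of $\iota_{\mathbb{L}}\omega$ on $Z$ and the identification of its kernel with the leaf tangent distribution — which is exactly what lets me pass freely among $s$, $\hat{s}$, $r$, and an arbitrary representative $\gamma$ of the same class. If one wished to avoid citing this, the same equalities could be derived directly in mapping-torus coordinates using the normal form, at the price of more bookkeeping; I would prefer the homological formulation. A minor point to keep in mind is simply that $r$ genuinely lies on $Z$ and $\iota_{\mathbb{L}}\omega$ is a bona fide smooth form there, both of which were recorded in the preliminaries.
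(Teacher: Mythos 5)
Your proposal is correct and follows essentially the same route as the paper's own proof: a modular-vector-field trajectory closed up by a leaf path to compute $\int_\gamma \iota_{\mathbb{L}}\omega = k$, followed by evaluating $\iota_{\mathbb{L}}\omega$ on a periodic $X_H$-orbit via $\omega(\mathbb{L},X_H) = -dH(\mathbb{L}) = -c$ on $Z$. The only difference is that you make explicit the justification (closedness of $\iota_{\mathbb{L}}\omega$ on $Z$ and the identification of its kernel with the leaf distribution) for the steps $\int_s = \int_{\hat{s}} = \int_\gamma = \int_r$ that the paper leaves implicit, which is a welcome clarification rather than a deviation.
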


\subsection{Hamiltonian actions on symplectic and $b$-symplectic manifolds.}

Let $\mathbb{T}^n$ be a torus which acts on a symplectic manifold $M$ by symplectomorphisms, and denote by $\mathfrak{t}$ and $\mathfrak{t}^*$ its Lie algebra and corresponding dual, respectively. We say that the action is \textbf{Hamiltonian} if there exists an invariant map $\mu:M\to\mathfrak{t}^*$ such that for each element $X\in\mathfrak{t}$,
\begin{equation}\label{momentmap}
d\mu^X=\iota_{X^\#}\omega,
\end{equation}
where $\mu^X=<\mu,X>$ is the component of $\mu$ in the direction of $X$, and $X^\#$ is the vector field on $M$ generated by $X$:
\[
X^\#(p)=\frac{d}{dt}\left[\text{exp} (tX)\cdot p\right].
\]
The map $\mu$ is called the \textbf{moment map}.

To study a torus acting on a $b$-manifold by $b$-symplectomorphisms, the definition of a \emph{Hamiltonian} action and of a \emph{moment map} must be adapted. To motivate the appropriate definitions we study two examples.

\begin{example}\label{example:S2}
Consider the $b$-symplectic manifold $(\SSS^2, Z = \{h = 0\}, \omega=\frac{d h}{h}\wedge d\theta)$, where the coordinates on the sphere are  $h\in\left[-1,1\right]$ and $\theta\in\left[0,2\pi\right]$.
For the $\SSS^1$-action given by the flow of $-\frac{\del}{\del \theta}$, a moment map on $M \backslash Z$ is $\mu(h,\theta)= \log |h|$. The image of $\mu$ is drawn in the left half of Figure \ref{fig:S2andT2} as two superimposed half-lines depicted slightly apart to emphasize that each point in the image has two connected components in its preimage: one in the northern hemisphere, and one in the southern hemisphere. By enlarging the codomain of our moment map to include points ``at infinity,'' we can define moment maps for torus actions on a $b$-manifold that enjoy many of the same properties as classic moment maps: they will be everywhere defined and their image will be a parameter space for the orbits of the action.
\begin{figure}[ht]
\includegraphics[height=4.5cm]{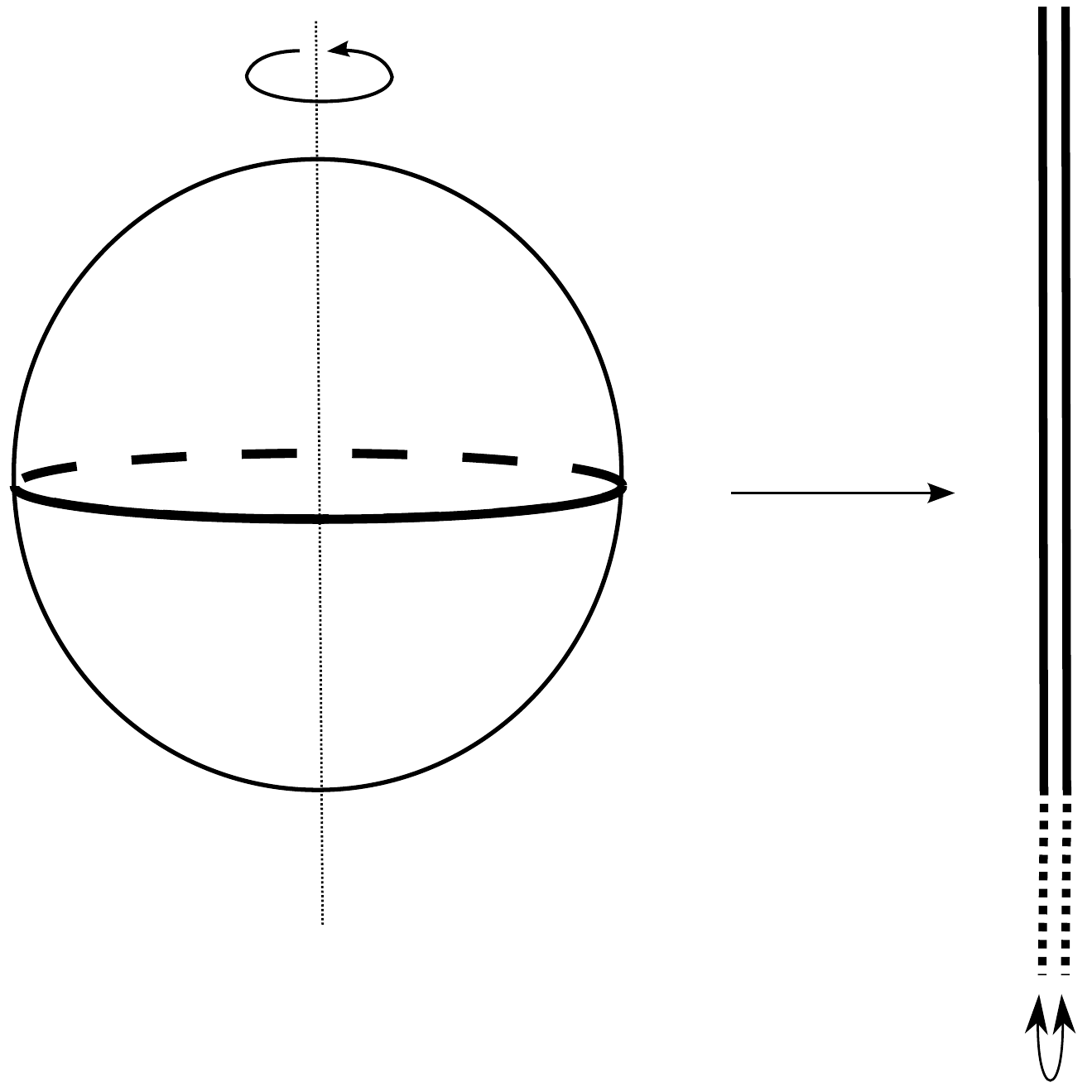} \hskip 1in  \includegraphics[height=4.5cm]{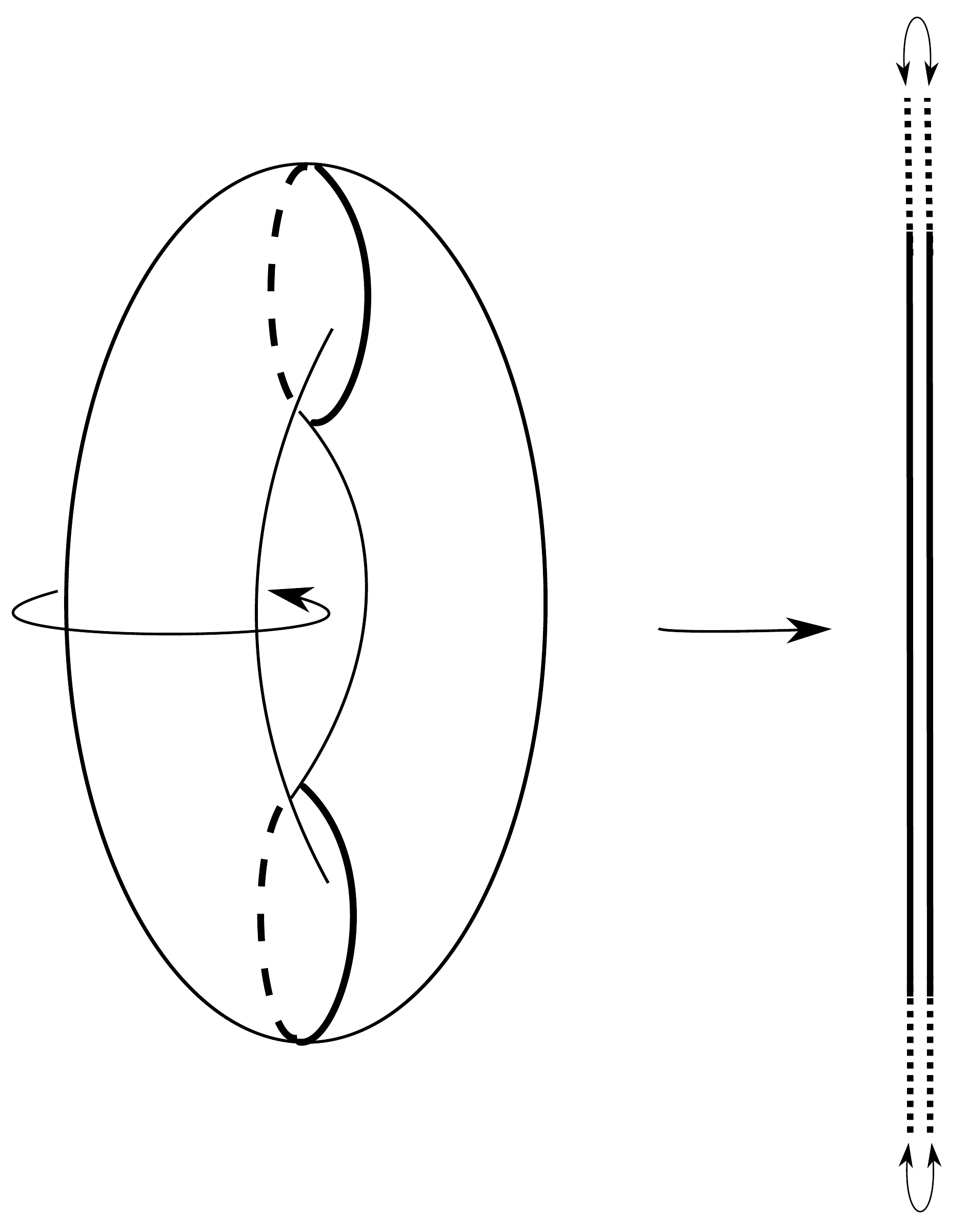}
\caption{The moment maps of the $\mathbb{S}^1$-actions on $M \backslash Z$ in Examples \ref{example:S2} and \ref{example:T2}.}
\label{fig:S2andT2}
\end{figure}
\end{example}

\begin{example}\label{example:T2}
Consider the $b$-symplectic manifold
\[
(\TT^2, Z = \{\theta_1 \in \{0, \pi\}\}, \omega=  \frac{d\theta_1}{\sin\theta_1}\wedge d\theta_2)
\]
where the coordinates on the torus are $\theta_1,\theta_2\in\left[0,2\pi\right]$. The circle action of rotation on the $\theta_2$ coordinate is given by the ${^b}C^{\infty}$ Hamiltonian function $\log \left|\frac{1 + \cos\theta_1}{\sin\theta_1}\right|$, the image of which is shown in the right half of Figure \ref{fig:S2andT2}.
\end{example}

\begin{definition}
An action of $\TT^n$ on a $b$-symplectic manifold $(M,\omega)$ is \textbf{Hamiltonian} if for any $X,Y\in\mathfrak{t}$:
\begin{itemize}
\item the one-form $\iota_{X^\#}\omega$ is exact, i.e., has a primitive $H_X\in\,^bC^\infty(M)$;
\item $\omega(X^\#,Y^\#)=0$.
\end{itemize}
A Hamiltonian action is \textbf{toric} if it is effective and $\dim\left(\mathbb{T}^n\right)=\frac{1}{2}\dim\left(M\right)$.
\end{definition}

\section{ Classification of toric $b$-surfaces}\label{sec:bsurfaces}

In this section we classify toric $\mathbb{S}^1$-actions on $b$-symplectic surfaces, which can be summarized as follows: all $b$-symplectic toric surfaces are equivariantly $b$-symplectomorphic to either Examples \ref{example:S2} or \ref{example:T2} with possibly a different number of components of the exceptional hypersurface. As noted in Remark \ref{rmk:classification in dim 2 and 4}, this result is a consequence of the Delzant theorem for toric $b$-symplectic manifolds (Theorem \ref{thm:bDelzant}). We prove the $2$-dimensional case independently here because it can be considered as the \emph{toy model} that yields the general classification of $b$-symplectic toric manifolds, as explained in Remark \ref{rmk:anotherway}.

One necessary ingredient is the fact that the only orientable compact surfaces admitting an effective $\mathbb{S}^1$-action are $\mathbb{S}^2$ and $\mathbb{T}^2$, and the action is equivalent to the standard action by rotation (see \cite[p.~22]{audin} for a proof). If the surface is symplectic and the action is Hamiltonian, then the surface must be a sphere $\mathbb S^2$. However, there are $b$-symplectic structures on $\mathbb{T}^2$ that admit a Hamiltonian circle action.

Another ingredient is Radko's classification of  $b$-symplectic structures on a compact surface $M$, up to $b$-symplectomorphism, by the set of curves $Z$, their modular periods and the regularized Liouville volume ({see Theorem 3 in \cite{radko} or} Theorem 17 in \cite{guimipi12}). To obtain an equivariant version of this result, we need an equivariant version of the $b$-Moser theorem (the classic version is Theorem 38 in \cite{guimipi12}):

\begin{theorem} \label{equivglobalmoser}\textbf{[Equivariant $b$-Moser theorem]}
Suppose that $M$ is compact and let $\omega_0$ and $\omega_1$ be two $b$-symplectic forms on $(M,Z)$. Suppose that $\omega_t$, for $0\leq t\leq 1$, is a smooth family of $b$-symplectic forms on $(M,Z)$ joining $\omega_0$ and $\omega_1$ such that the $b$-cohomology class $[\omega_t]$ does not depend on $t$. Then, there exists a family of diffeomorphisms $\gamma_t:M\to M$, for $0\leq t\leq 1$ such that $\gamma_t$ leaves $Z$ invariant and $\gamma_t^*\omega_t=\omega_0$.

Furthermore, if the $b$-symplectic forms are invariant by the action of a compact Lie group, then $\gamma_t$ can be made equivariant.
\end{theorem}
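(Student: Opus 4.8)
The plan is to run Moser's path method inside the $b$-de Rham complex, exactly as in the non-equivariant $b$-Moser theorem (Theorem 38 of \cite{guimipi12}), and then to upgrade the resulting isotopy to a $G$-equivariant one by averaging over the compact group $G$. We take the extra hypothesis to mean that every $\omega_t$ in the path is $G$-invariant.

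First I would use that $[\omega_t]\in {}^bH^2(M)$ is independent of $t$: differentiating in $t$ shows that $\dot\omega_t:=\frac{d}{dt}\omega_t$ is an \emph{exact} $b$-$2$-form, and, as in \cite{guimipi12}, one can choose a \emph{smooth} family of primitives $\sigma_t\in {}^b\Omega^1(M)$ with $d\sigma_t=\dot\omega_t$ (via a parameter-dependent $b$-Poincar\'e lemma, i.e.\ a $b$-homotopy operator, or $b$-Hodge theory on the compact $b$-manifold $M$; this is where the analytic content of the cited theorem lives). For the equivariant refinement I would then replace $\sigma_t$ by its average $\tilde\sigma_t:=\int_G g^*\sigma_t\,dg$. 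Since $G$ acts by $b$-maps — it preserves $Z$ and the orientation, so it acts on ${}^bT^*M$ — the form $\tilde\sigma_t$ is again a smooth family of $b$-$1$-forms; since $d$ commutes with pullback and with averaging and $\dot\omega_t$ is $G$-invariant, $d\tilde\sigma_t=\dot\omega_t$; and $\tilde\sigma_t$ is $G$-invariant by construction.

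Next I would define a time-dependent $b$-vector field $X_t$ by $\iota_{X_t}\omega_t=-\tilde\sigma_t$. Because $\omega_t$ has maximal rank as a section of $\Lambda^2({}^bT^*M)$ at every point of $M$, including the points of $Z$, contraction with $\omega_t$ is a bundle isomorphism ${}^bTM\to{}^bT^*M$; hence $X_t$ is a well-defined smooth family of sections of ${}^bTM$, that is, vector fields tangent to $Z$, and $X_t$ inherits $G$-invariance from $\omega_t$ and $\tilde\sigma_t$ by the nondegeneracy. By compactness of $M$ the flow $\gamma_t$ of $X_t$ exists for all $t\in[0,1]$; it leaves $Z$ invariant because $X_t$ is a $b$-vector field, and it is $G$-equivariant because $X_t$ is $G$-invariant. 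Finally, using Cartan's formula in the $b$-de Rham complex,
\[
\frac{d}{dt}\bigl(\gamma_t^*\omega_t\bigr)=\gamma_t^*\bigl(\mathcal{L}_{X_t}\omega_t+\dot\omega_t\bigr)=\gamma_t^*\bigl(d\iota_{X_t}\omega_t+\dot\omega_t\bigr)=\gamma_t^*\bigl(-d\tilde\sigma_t+\dot\omega_t\bigr)=0,
\]
so $\gamma_t^*\omega_t=\omega_0$ for all $t$; taking $G$ trivial (so that the averaging does nothing) recovers the first assertion.

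The Moser computation is purely formal and carries over verbatim from the smooth case, since it only uses that $d$, $\iota$ and $\mathcal{L}$ satisfy Cartan's identities on $b$-forms and that $b$-symplectic forms are pointwise nondegenerate as sections of $\Lambda^2({}^bT^*M)$; the fact that the flow automatically preserves $Z$ is a free consequence of $X_t$ being a $b$-vector field, which is the nice feature of working in the $b$-tangent bundle. The only genuinely new points are the smooth (parametrized) choice of $b$-primitives $\sigma_t$ — which I would quote from \cite{guimipi12} or reprove directly — and the check that averaging stays within $b$-objects and commutes with $d$; of these I expect the parametrized choice of primitives to be the main technical obstacle, just as it is in the ordinary equivariant Moser theorem.
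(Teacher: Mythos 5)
Your proposal is correct and follows essentially the same route as the paper: pick a smooth family of $b$-primitives of $\dot\omega_t$, average over $G$ to make them invariant, solve Moser's equation $\iota_{v_t}\omega_t=-\tilde\sigma_t$ in the $b$-tangent bundle, and integrate the resulting invariant $b$-vector fields. The paper's proof is just a terser version of the same argument, deferring the Moser computation and the parametrized choice of primitives to Theorem 38 of \cite{guimipi12}.
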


\begin{proof}
The proof is essentially the same as in \cite{guimipi12}, but using averaging techniques to guarantee that $\gamma_t$ is equivariant. The isotopy $\gamma_t$ is obtained by integrating a smooth family of invariant $b$-vector fields $v_t$ such that $\cL_{v_t}\omega_t=\frac{d\omega_t}{dt}$. Because  $[\omega_t]$ is independent of $t$, we may pick a family $\mu_t\in\,^b\Omega^1(M)$ such that $\frac{d\omega_t}{dt}=d \mu_t$. By replacing $\mu_t$ with $\int_G (\rho_g^*(\mu_t)dg)$ (where $g$ is a Haar measure and $\rho: G\times (M,Z)\longrightarrow (M,Z)$ is the action), we may assume that $\mu_t$ is invariant. Then the $b$-vector field defined by Moser's equation $\iota_{v_t}\omega_t=-\mu_t$ is invariant because $\mu_t$ and $\omega_t$ are invariant. Its flow gives the desired equivariant diffeomorphism.
\end{proof}

\begin{theorem}
\label{thm:surfaces}
{A $b$-symplectic surface with a toric $\mathbb{S}^1$-action is equivariantly $b$-symplectomorphic to either $(\mathbb{S}^2,Z)$ or $(\mathbb{T}^2,Z)$, where $Z$ is a collection of latitude circles (in the $\mathbb{T}^2$ case, an even number of such circles), the action is the standard rotation, and the $b$-symplectic form is determined by the modular periods of the exceptional curves and the regularized Liouville volume.}
\end{theorem}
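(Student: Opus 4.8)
The plan is to combine the known topological classification of $\SSS^1$-actions on compact orientable surfaces with Radko's invariants and the Equivariant $b$-Moser theorem (Theorem \ref{equivglobalmoser}). First, since the surface $M$ carries an effective $\SSS^1$-action, by the cited result of Audin it is equivariantly diffeomorphic to $\SSS^2$ or $\TT^2$ with the standard rotation action. The fixed-point set of the rotation action is either two poles (the $\SSS^2$ case) or empty (the $\TT^2$ case). The next step is to locate $Z$: since the action is Hamiltonian with $\,^bC^\infty$ moment map $H$, one argues that $Z$ must be a union of orbits, hence a union of latitude circles, and that $Z$ avoids the fixed points (near a fixed point the action linearizes and $\omega$ would have to be a genuine symplectic form there, so $Z$ cannot pass through a pole). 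In the $\TT^2$ case, a parity argument is needed: the modular vector field is tangent to $Z$ and its flow, together with the orientation conventions, forces the components of $Z$ to come in pairs — equivalently, going around the $\theta_2$-circle the $b$-form $\frac{dy}{y}$-type singularity must change sign an even number of times for $\omega$ to be globally defined and the action to be Hamiltonian. One can also see this by noting that $\SSS^1 = \mathfrak{t}^*$-with-points-at-infinity arguments (developed later in the paper) or simply by a direct Euler-characteristic/winding computation on the circle base.

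Once the underlying manifold and the position of $Z$ are pinned down, the second half is to produce the equivariant $b$-symplectomorphism to a model. Here I would invoke the $\SSS^1$-equivariant version of Radko's theorem: two $\SSS^1$-invariant $b$-symplectic forms on $(M,Z)$ with the same set of curves $Z$, the same modular periods, and the same regularized Liouville volume are equivariantly $b$-symplectomorphic. This follows from Theorem \ref{equivglobalmoser} once one checks that these invariants determine the $b$-cohomology class $[\omega] \in \,^bH^2(M)$ and that one can interpolate: by Proposition \ref{prop:propertiesofmodularperiod}, the modular periods are exactly the periods $\int_\gamma \iota_{\mathbb L}\omega$ of the "singular part" of the class, and the Liouville volume pins down the remaining smooth part of $\,^bH^2(M)$ (which by the Mazzeo–Melrose decomposition is $H^2(M) \oplus H^1(Z)$ for surfaces, here a one-dimensional piece coming from the fundamental class plus one contribution per component of $Z$). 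One builds a linear path $\omega_t = (1-t)\omega + t\,\omega_{\mathrm{model}}$ between the given form and the chosen model form with matching invariants — both invariant under the fixed rotation action, both genuinely $b$-symplectic along the path since the curve $Z$ and the cohomology class are held fixed — and applies the equivariant Moser theorem to get $\gamma_1$ with $\gamma_1^*\omega_{\mathrm{model}} = \omega$, leaving $Z$ invariant. Composing with the Audin equivariant diffeomorphism completes the identification.

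The main obstacle I expect is the bookkeeping in the $\TT^2$ case: verifying that $Z$ must consist of an even number of latitude circles and that the arbitrary Radko data (curves, periods, volume) can actually be realized by an $\SSS^1$-invariant model $b$-symplectic form of the stated type, so that the target of the Moser argument genuinely exists. One must check that, given latitude circles $Z_1, \dots, Z_{2m}$ with prescribed modular periods $k_1, \dots, k_{2m}$ and a prescribed Liouville volume, there is an honest $\SSS^1$-invariant $b$-symplectic form on $\TT^2$ (resp.\ $\SSS^2$) with exactly those invariants — this is a one-variable construction, essentially choosing a suitable function of $\theta_1$ with simple zeros of the right "weights" at the $Z_i$ and the right total integral, but it requires care to ensure smoothness of $\omega$ as a $b$-form and compatibility of signs of the residues around $\TT^2$. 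A secondary (minor) obstacle is making the two invariant forms lie in the same $b$-cohomology class so that Theorem \ref{equivglobalmoser} applies verbatim; this is where the explicit identification of the residues with modular periods via Proposition \ref{prop:propertiesofmodularperiod} does the work.
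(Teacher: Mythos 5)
Your proposal is correct and follows essentially the same route as the paper: Audin's classification of effective circle actions on surfaces, the observation that $Z$ must be a union of orbits (latitude circles) with the evenness in the $\TT^2$ case forced by the orientation change across each component of $Z$, and then Radko's classification upgraded to the equivariant setting via Theorem \ref{equivglobalmoser}. The extra bookkeeping you flag (realizing invariant model forms and matching $b$-cohomology classes) is exactly what the paper delegates to the proof of Radko's theorem (Theorem 17 in \cite{guimipi12}), rerun with the equivariant Moser step.
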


\begin{proof}
{
By the classic result about effective circle action on surfaces, we may assume that the surface and group action is either $\mathbb{S}^2$ or $\mathbb{T}^2$ with the standard rotation. Since the Hamiltonian circle action must preserve $Z$, the components of $Z$ must be latitude circles (in the standard coordinates on $\mathbb{S}^2$ or $\mathbb{T}^2$, respectively level curves of $h$ or $\theta_1$). This forces the number of components of $Z \subseteq \mathbb{T}^2$ to be even, since the orientation of a $b$-symplectic form changes when you cross a component of $Z$. By applying an equivariant diffeomorphism, we may also assume that $Z$ is fixed. Finally we repeat the proof of Theorem 17 in \cite{guimipi12}, replacing the classic $b$-Moser theorem (Theorem 38 in \cite{guimipi12}) by its equivariant version (Theorem \ref{equivglobalmoser} above).
}
\end{proof}

\section{Toric actions in higher dimensions}\label{sect:MminusZ}

\subsection{Local picture: in a neighborhood of $Z$}\label{subsect:local}

We begin by studying a toric actions near a connected component of $Z$. To simplify our exposition, \emph{we assume throughout section \ref{subsect:local} that $Z$ has one connected component}. In the general case, these results hold in a neighborhood of each connected component of $Z$.

Proposition \ref{prop:localmomentmap} is the main result of this section. It states that a toric action near $Z$ is locally a product of a codimension-1 torus action on a symplectic leaf of $Z$ with an circle action whose flow is transverse to the leaves. The codimension-1 subtorus $\mathbb{T}^{n-1}_Z$ will consist of those elements of $\mathbb{T}^n$ that preserve the symplectic foliation of $Z$. Toward the goal of showing that this subtorus is well-defined, we recall the following standard fact from Poisson geometry.

\begin{remark}\label{rmk:smoothhamiltonians}
Let $(M, Z, \omega)$ be a $b$-symplectic manifold.  Since $Z$ is a Poisson submanifold of $M$, a Hamiltonian vector field of a smooth function $X_f$ is tangent to the symplectic leaves of $Z$ and $\restr{X_f}{\mathcal{L}} = X_{f \circ i_{\mathcal{L}}}$ where $i_{\mathcal{L}}: (\mathcal{L}, \omega_{\mathcal{L}}) \rightarrow Z$ is the inclusion of a symplectic leaf into $Z$.
\end{remark}

Given a Hamiltonian $\mathbb{T}^k$-action on $(M^{2n}, Z, \omega)$ and any $X \in \mathfrak{t}$, the $b$-form $\iota_{X^{\#}}\omega$ has a ${^b}C^{\infty}$ primitive that can be written in a neighborhood of $Z$ as $c\log|y| + g$, where $y$ is a local defining function for $Z$, $g$ is smooth, and $c \in \mathbb{R}$ depends on $X$. The map $X \mapsto c$ is a well-defined homomorphism and therefore an element $v_{Z}$ of $\mathfrak{t}^* = \textrm{Hom}(\mathfrak{t}, \mathbb{R})$ called the \textbf{modular weight} of $Z$. We will denote by $\mathfrak{t}_{Z}$ the kernel of $v_{Z}$. By Proposition \ref{prop:propertiesofmodularperiod}, the values $\langle v_{Z}, X\rangle$ are integer multiples of the modular period of $Z$ when $X$ is a lattice vector, so the slope of $v_{Z}$ is rational. We will show in Claim \ref{claim:vznonzero} that $v_{Z}$ is nonzero. First, we prove an equivariant Darboux theorem for compact group actions in a neighborhood of a fixed point. Given a fixed point $p$ of an action $\rho:G\times M\longrightarrow M$, we denote by $d\rho$ the linear action defined via the exponential map in a neighborhood of the origin in $T_p M$:  $d\rho(g,v)= d_p(\rho(g))(v)$.

\begin{theorem}\label{thm:equivDTgeneralG}
Let $\rho$ be a $b$-symplectic action of a compact Lie group $G$  on a $b$-symplectic manifold $(M,Z,\omega)$, and let $p \in Z$ be a fixed point of the action.  Then there exist local coordinates $(x_1,y_1,\dots, x_{n-1},y_{n-1}, z,t)$ centered at $p$ such that the action is linear in these coordinates and
\[
\omega =\sum_{i=1}^{n-1} dx_i\wedge dy_i+\frac{1}{z}\,dz\wedge dt.
\]
\end{theorem}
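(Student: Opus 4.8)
The plan is to combine three ingredients: Bochner's linearization theorem to make the action linear near $p$, a piece of linear symplectic algebra to put $\omega$ into normal form at $p$, and an equivariant relative Moser argument --- a local, fixed-point version of Theorem \ref{equivglobalmoser} --- to upgrade the normal form at $p$ to the normal form on a whole neighborhood.

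First, since $G$ is compact and $p$ is a fixed point, Bochner's theorem gives a $G$-equivariant diffeomorphism from a neighborhood of $p$ onto a neighborhood of the origin in $T_pM$ that sends $p$ to $0$ and intertwines $\rho$ with the linear action $d\rho$; thus we may assume the action is already linear near $p$. Because $Z$ is $G$-invariant and $T_pZ$ is a $d\rho$-invariant hyperplane, I would pick (by averaging a projection) a $d\rho$-invariant complementary line $\ell$ and write the invariant hypersurface $Z$ near the origin as a graph $\{s = \psi(w)\}$ over $T_pZ$ with values in $\ell$; invariance of $Z$ forces $\psi$ to be $d\rho$-equivariant, so $(w,s)\mapsto (w, s-\psi(w))$ is an equivariant diffeomorphism that straightens $Z$ to $\{s=0\}$ while keeping the action linear. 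Writing $z$ for a linear coordinate on $\ell$, we now have $Z = \{z = 0\}$, the action linear, and $p = 0$.

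Second, I would normalize the value of $\omega$ at $p$ by a further \emph{linear} change of coordinates --- being linear, it automatically preserves linearity of the action, so no equivariant linear algebra is needed. Put $W := T_pZ$, so that ${}^bT_pM = W \oplus \mathbb{R}\langle z\partial_z\rangle$ and ${}^bT_p^*M = \mathbb{R}\langle \frac{dz}{z}\rangle \oplus W^*$, where $\frac{dz}{z}$ is well defined independently of the scaling of $z$ and $W^*$ sits inside ${}^bT_p^*M$ as the annihilator of $z\partial_z$. Decomposing $\omega|_p = \omega' + \frac{dz}{z}\wedge\mu$ with $\omega'\in\Lambda^2 W^*$ and $\mu = \iota_{z\partial_z}\omega|_p \in W^*$, nondegeneracy of $\omega|_p$ forces $\mu\neq 0$ and $\omega'$ to restrict to a nondegenerate form on the hyperplane $\ker\mu\subseteq W$; a symplectic basis for that restriction supplies covectors $\xi_i,\eta_i\in W^*$ with $\omega' = \sum_i\xi_i\wedge\eta_i$. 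Lifting $\xi_i,\eta_i,\mu$ to linear functionals $x_i,y_i,t$ on $T_pM$ and retaining $z$, one checks that $(x_1,y_1,\dots,x_{n-1},y_{n-1},z,t)$ is a linear coordinate system with $\{z=0\}=Z$ in which the value of $\omega$ at $p$ coincides with the value at $p$ of $\omega_0 := \sum_i dx_i\wedge dy_i + \frac{1}{z}\,dz\wedge dt$.

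Third --- the step I expect to be the main obstacle --- I would run an equivariant Moser isotopy that fixes $p$. The $b$-form $\eta := \omega - \omega_0$ is closed and vanishes at $p$; by the $b$-Poincar\'{e} lemma (local vanishing of $b$-de Rham cohomology in degrees $\geq 2$) we can write $\eta = d\sigma_0$ for a $b$-$1$-form $\sigma_0$, and after subtracting from $\sigma_0$ its constant-coefficient value at $p$ and then averaging over $G$ we obtain an invariant $b$-$1$-form $\sigma$ with $d\sigma = \eta$ and $\sigma(p) = 0$ --- this is where it matters that $G$ fixes $p$, so that averaging does not destroy the vanishing at $p$. The forms $\omega_s := \omega_0 + s\eta$ are $b$-symplectic on some neighborhood of $p$ for all $s\in[0,1]$ (they all equal $\omega$ at $p$), so the $b$-vector field $v_s$ defined by $\iota_{v_s}\omega_s = -\sigma$ is invariant, is tangent to $Z$ (being a $b$-vector field), and vanishes at $p$ because $\sigma(p) = 0$ and $\omega_s|_p$ is nondegenerate. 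Hence the flow $\gamma_s$ of $v_s$ is defined near $p$ for $s\in[0,1]$, is $G$-equivariant, preserves $Z$, fixes $p$, and $\frac{d}{ds}\gamma_s^*\omega_s = \gamma_s^*(d\iota_{v_s}\omega_s + \eta) = \gamma_s^*(-d\sigma + \eta) = 0$, so $\gamma_1^*\omega = \omega_0$. Pulling the coordinates $(x_i,y_i,z,t)$ back by $\gamma_1^{-1}$ yields coordinates centered at $p$ in which $Z$ is still $\{z=0\}$, the action is still linear (since $\gamma_1^{-1}$ is equivariant and the old action was linear), and $\omega = \sum_i dx_i\wedge dy_i + \frac{1}{z}\,dz\wedge dt$. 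The essential difficulty is forcing the Moser isotopy to fix $p$: this is precisely what compels the normalization of $\omega$ at $p$ in the previous step and the choice of a primitive $\sigma$ vanishing at $p$, and it is where the passage from $C^\infty$ to ${}^bC^\infty$ --- hence the $b$-Poincar\'{e} lemma --- earns its keep.
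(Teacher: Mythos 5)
Your proposal is correct and follows essentially the same route as the paper: Bochner linearization at the fixed point, a linear change of coordinates normalizing $\omega$ at $p$, and an equivariant Moser isotopy driven by an invariant primitive of $\omega-\omega_0$ chosen to vanish at $p$. You supply more detail than the paper does (the equivariant straightening of $Z$, the linear algebra on ${}^bT_p^*M$, and the averaging of the primitive), but the argument is the same one.
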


\begin{proof}

{By picking a Riemannian metric we may assume that $\omega$ and $\rho$ live on the $b$-manifold $(T_pM, T_pZ)$.}

By Bochner's theorem \cite{bo}, the action of $\rho$ is locally equivalent to the action of $d\rho$. That is, there are coordinates $(x_1, y_1, \dots, x_{n-1}, y_{n-1}, z, t)$ centered at $\mathbf{0}=(0,0,\ldots,0)$ on which the action is linear. By studying the construction of $\varphi$ in \cite{bo}, we can choose the coordinates so that $T_pZ$ is the coordinate hyperplane $\{z = 0\}$. Also, after a linear change of these coordinates, we may assume that
\[
\restr{\omega}{\mathbf{0}} = \sum_{i=1}^{n-1} dx_i\wedge dy_i+\frac{1}{z}\,dz\wedge dt.
\]

Next, we will perform an equivariant Moser's trick. Let $\omega_0 = \omega$,
$$\omega_1 =\sum_{i=1}^{n-1} dx_i\wedge dy_i+\frac{1}{z}\,dz\wedge dt, \text{ \,\,\,and\,\,\, }\omega_s=s\omega_1+(1-s) \omega_0, \text{ for } s\in [0,1].
$$
Because $\omega_s$ has full rank at $\mathbf{0}$, we may assume (after shrinking the neighborhood) that $\omega_s$ has full rank for all $s$. Let $\alpha$ be a primitive for $\omega_1 - \omega_0$ that vanishes at $\mathbf{0}$ ($\alpha$ is a $b$-form). Then we apply the argument of Theorem \ref{equivglobalmoser} to obtain the desired equivariant $b$-symplectomorphism.
\end{proof}

In the particular case where the group is a torus we obtain the following:

\begin{corollary}\label{thm:equivDT}Consider a fixed point $z \in Z$ of a symplectic $\mathbb{T}^k$-action on $(M, Z, \omega)$. If the isotropy representation on $T_zM$ is trivial, then the action is trivial in a neighborhood of $z$.
\end{corollary}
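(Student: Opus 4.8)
The plan is to read this off directly from Theorem \ref{thm:equivDTgeneralG}. Apply that theorem with $G = \mathbb{T}^k$ and with the given fixed point $z \in Z$ (playing the role of $p$). This produces local coordinates $(x_1,y_1,\dots,x_{n-1},y_{n-1},w,t)$ centered at $z$ in which the torus action is \emph{linear} (and in which $\omega$ is in $b$-Darboux form, though we will not need the form of $\omega$ here). So it suffices to show that the linear $\mathbb{T}^k$-action appearing in this chart is trivial.

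The key observation is that this linear action is, up to a linear change of coordinates, exactly the isotropy representation $d\rho$ on $T_zM$. Indeed, the linearizing coordinates in Theorem \ref{thm:equivDTgeneralG} are obtained from Bochner's theorem (as in the cited proof), where one first uses the exponential map of an invariant metric to pass to $(T_zM, T_zZ)$ — under which the group action becomes $d\rho$ — and then applies an equivariant Moser argument, which only alters the coordinates by an equivariant $b$-symplectomorphism fixing $z$, hence does not change the linearized action at $z$. Therefore the linear action in the chart equals $d\rho$ up to conjugation by an invertible linear map, and in particular it is trivial if and only if $d\rho$ is.

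By hypothesis $d\rho$ is the trivial representation, so every $g \in \mathbb{T}^k$ acts as the identity linear map on $T_zM$, hence as the identity diffeomorphism in the chart. Consequently the $\mathbb{T}^k$-action is trivial on this coordinate neighborhood of $z$, which is the claim. There is really no obstacle of substance here: the entire content is packaged in the equivariant Darboux theorem, and the only point requiring a line of care is matching its coordinate statement with the definition of the isotropy representation $d\rho$ recalled just before Theorem \ref{thm:equivDTgeneralG}. (Alternatively, one could phrase the argument by noting that the fixed-point set of the action is, near $z$, the submanifold corresponding to the fixed subspace of $d\rho$, which is all of $T_zM$, so the fixed-point set contains a neighborhood of $z$ — but this is the same argument in different words.)
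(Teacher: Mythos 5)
Your proof is correct and is exactly the argument the paper intends: the corollary is stated without proof as an immediate consequence of the equivariant $b$-Darboux theorem (Theorem \ref{thm:equivDTgeneralG}), and your identification of the linearized action in the Darboux chart with the isotropy representation $d\rho$ is precisely the one-line justification the authors leave implicit.
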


\begin{claim}\label{claim:vznonzero} Let $(M^{2n}, Z, \omega)$ be a $b$-symplectic manifold with a toric action. Then $v_{Z}$ is nonzero and therefore $\mathfrak{t}_{Z}$ is a hyperplane in $\mathfrak{t}$.
\end{claim}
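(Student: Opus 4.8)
The plan is to argue by contradiction: suppose $v_Z = 0$. This means that for every $X \in \mathfrak{t}$, the primitive $H_X$ of $\iota_{X^\#}\omega$ is a genuinely smooth function near $Z$ (its $\log|y|$-coefficient vanishes). I want to show this forces the action to fail to be toric — either it is not effective, or the dimension count is wrong. The key geometric input is that if all Hamiltonians are smooth, then by Remark \ref{rmk:smoothhamiltonians} every generating vector field $X^\#$ is tangent to the symplectic leaves of $Z$, and its restriction to a leaf $\mathcal{L}$ is the Hamiltonian vector field of the restricted function. So the entire $\mathbb{T}^n$-action preserves the symplectic foliation of $Z$ and restricts to a Hamiltonian $\mathbb{T}^n$-action on each compact symplectic leaf $(\mathcal{L}, \omega_{\mathcal{L}})$, which has dimension $2n-2$.

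The first step is to observe that a Hamiltonian $\mathbb{T}^n$-action on a $(2n-2)$-dimensional compact symplectic manifold cannot be effective: by the standard theory of Hamiltonian torus actions, an effective Hamiltonian action of $\mathbb{T}^k$ on a compact symplectic manifold requires $k \le \frac12 \dim = n-1$. Hence the kernel of the $\mathbb{T}^n$-action on $\mathcal{L}$ is at least one-dimensional; pick a nontrivial subtorus (or at least a nontrivial one-parameter subgroup with generator $X$) acting trivially on $\mathcal{L}$. The second step is to upgrade "acts trivially on the leaf $\mathcal{L}$" to "acts trivially on a neighborhood of $\mathcal{L}$ in $M$", contradicting effectiveness of the $\mathbb{T}^n$-action on $M$. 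For this I would use the equivariant Darboux theorem, Theorem \ref{thm:equivDTgeneralG} (or rather its corollary \ref{thm:equivDT}): since the action is toric it has fixed points, and a fixed point $p$ lying on $Z$ must have its leaf $\mathcal{L}$ passing through $p$; if the subtorus generated by $X$ acts trivially on $\mathcal{L}$ and $v_Z = 0$, I want to conclude the isotropy representation on $T_pM$ is trivial, so by Corollary \ref{thm:equivDT} the subtorus acts trivially near $p$, and by connectedness of $M$ and rigidity of torus actions, trivially on all of $M$ — contradicting effectiveness.

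The main obstacle, and the step requiring the most care, is bridging from "trivial on the leaf $\mathcal{L}$" to "trivial on the normal directions to $\mathcal{L}$ inside $M$". In the coordinates $(x_1,y_1,\dots,x_{n-1},y_{n-1},z,t)$ of Theorem \ref{thm:equivDTgeneralG}, the leaf through $p$ is (a piece of) $\{z = 0, t = \text{const}\}$ and the symplectic form on it is $\sum dx_i \wedge dy_i$; the two extra directions are $z$ and $t$. The hypothesis $v_Z = 0$ says precisely that $X^\#$ has no $\frac1z\partial_t$-type component (no $c \log|y|$ in the Hamiltonian), and triviality on the leaf kills the $x_i, y_i$ components, so only a possible $z\partial_z$ or $\partial_t$ component of $X^\#$ could survive. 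I would rule these out by noting that a linear torus action preserving $\{z=0\}$ and the $b$-symplectic form must act on the $(z,t)$-plane through the $\log$-Hamiltonian, whose coefficient is exactly $\langle v_Z, X\rangle = 0$; so the isotropy weights in the normal plane vanish, the isotropy representation on $T_pM$ is trivial, and Corollary \ref{thm:equivDT} applies. Once $v_Z \ne 0$, it is a nonzero element of $\mathfrak{t}^*$ and $\mathfrak{t}_Z = \ker v_Z$ is automatically a hyperplane, completing the claim.
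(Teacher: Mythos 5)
Your proposal follows essentially the same route as the paper's proof: assume $v_Z=0$, so every $\iota_{X^{\#}}\omega$ has a smooth primitive, use Remark \ref{rmk:smoothhamiltonians} to get an induced Hamiltonian $\mathbb{T}^n$-action on a $(2n-2)$-dimensional compact symplectic leaf $\mathcal{L}$, extract a circle subgroup acting trivially on $\mathcal{L}$ by the dimension bound, and promote this to triviality of the isotropy representation so that Corollary \ref{thm:equivDT} contradicts effectiveness. Two small adjustments bring your write-up in line with the paper. First, you do not need (and have not justified the existence of) a fixed point of the full $\mathbb{T}^n$-action on $Z$: the point fed to Corollary \ref{thm:equivDT} only has to be fixed by the circle subgroup in question, and since that circle acts trivially on $\mathcal{L}$, \emph{every} point $z\in\mathcal{L}$ qualifies. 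Second, the paper replaces your weight computation in the $(z,t)$-plane by pure linear algebra: the isotropy representation of the circle at $z$ is the identity on $T_z\mathcal{L}$, preserves $T_zZ$, and therefore induces linear circle actions on the one-dimensional quotients $T_zZ/T_z\mathcal{L}$ and $T_zM/T_zZ$, which are automatically trivial; a representation of a compact connected group that is unipotent with respect to such a flag is trivial, so the whole isotropy representation on $T_zM$ is trivial. With these repairs your argument is exactly the paper's.
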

\begin{proof} Consider a toric action on $(M^{2n}, Z, \omega)$ such that $\iota_{X^{\#}}\omega \in \Omega^1(M)$ for every $X \in \mathfrak{t}$. We prove that this action is not effective.

Let $(\mathcal{L}, \omega_{\mathcal{L}})$ be a leaf of the symplectic foliation of $Z$. By Remark \ref{rmk:smoothhamiltonians} the action on $M$ induces a Hamiltonian torus action on the symplectic manifold $(\mathcal{L}, \omega_{\mathcal{L}})$. Because $\textrm{dim}(\mathcal{L}) = 2n-2$, there must be a subgroup $\SSS^1 \subseteq \mathbb{T}^n$ that acts trivially on $\mathcal{L}$. For any $z \in \mathcal{L}$, the isotropy representation of this $\SSS^1$-action on $T_zM$ restricts to the identity on $T_z\mathcal{L} \subseteq T_zM$ and preserves the subspace $T_zZ$. It therefore induces a linear $\SSS^1$-action on the 1-dimensional vector space $T_zZ / T_z\mathcal{L}$. Any such action is trivial, so the isotropy representation restricts to the identity on $T_zZ$. By the same argument, the isotropy representation on all of $T_zM$ is trivial. By Corollary \ref{thm:equivDT}, the $\SSS^1$-action is the identity on a neighborhood of $z$ and therefore the action is not effective.
\end{proof}

\begin{corollary}\label{cor:notsmooth} If the $b$-symplectic manifold $(M, Z, \omega)$ admits a toric action such that $\iota_{X^{\#}}\omega \in \Omega^1(M)$ for every $X \in \mathfrak{t}$, then $Z = \emptyset$.
\end{corollary}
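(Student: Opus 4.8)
The plan is to deduce Corollary \ref{cor:notsmooth} directly from Claim \ref{claim:vznonzero} by contraposition. Suppose $Z \neq \emptyset$ and that $(M, Z, \omega)$ carries a toric action with $\iota_{X^\#}\omega \in \Omega^1(M)$ for every $X \in \mathfrak{t}$. Since $Z$ is a closed embedded hypersurface, it has at least one connected component $Z'$, and all of the local analysis of Section \ref{subsect:local} applies in a neighborhood of $Z'$. I would first note that the hypothesis $\iota_{X^\#}\omega \in \Omega^1(M)$ forces the $b$-function primitive $H_X = c\log|y| + g$ of $\iota_{X^\#}\omega$ to have $c = 0$ for every $X$ (a closed smooth $1$-form that is $b$-exact is in fact exact as a smooth form, since the $\log|y|$ term is the only source of a nonzero modular weight). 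Hence the modular weight $v_{Z'}$ is identically zero.

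Next, I would invoke Claim \ref{claim:vznonzero}, whose proof applies verbatim to the component $Z'$: it shows that a toric action on a $b$-symplectic manifold whose Hamiltonians near $Z'$ are all genuinely smooth cannot be effective, because on any compact symplectic leaf $\mathcal{L}$ of $Z'$ the induced Hamiltonian $\mathbb{T}^n$-action on the $(2n-2)$-dimensional manifold $\mathcal{L}$ must have a circle subgroup $\SSS^1 \subseteq \mathbb{T}^n$ acting trivially, and then the isotropy argument together with Corollary \ref{thm:equivDT} (the equivariant Darboux/Bochner statement) shows that this $\SSS^1$ acts trivially on a whole neighborhood of a point of $\mathcal{L}$, contradicting effectiveness. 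This is exactly the conclusion "this action is not effective" established in Claim \ref{claim:vznonzero}. But a toric action is by definition effective, so we have a contradiction, and therefore $Z = \emptyset$.

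The only subtlety, and the step I would be most careful about, is the reduction "$\iota_{X^\#}\omega$ smooth for all $X$" $\Longrightarrow$ "$v_{Z'} = 0$," i.e., checking that the smoothness hypothesis really does kill the modular weight rather than merely constraining it. One way to see this cleanly: if $H_X = c\log|y| + g$ is a $b$-primitive of the smooth closed form $\iota_{X^\#}\omega$, then $d(c\log|y|) = \iota_{X^\#}\omega - dg$ is smooth near $Z'$; but $d(c\log|y|) = c\,\frac{dy}{y}$, and $\frac{dy}{y}$ is not a smooth $1$-form in any neighborhood of $Z'$ unless $c = 0$. Hence $c = 0$, the map $X \mapsto c$ is the zero homomorphism, and $v_{Z'} = 0$. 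With $v_{Z'} = 0$ the hypothesis of Claim \ref{claim:vznonzero} is met near $Z'$, and the rest is immediate. I do not expect any genuine obstacle here; the content of the corollary is really just repackaging Claim \ref{claim:vznonzero}, the only thing to verify being that the global smoothness assumption is equivalent to the vanishing of the modular weight that drives that claim's proof.
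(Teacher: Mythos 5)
Your proposal is correct and follows essentially the same route as the paper: Corollary \ref{cor:notsmooth} is an immediate consequence of the proof of Claim \ref{claim:vznonzero}, which literally begins by assuming $\iota_{X^{\#}}\omega \in \Omega^1(M)$ for all $X$ and deduces that the action is not effective (an argument that requires a leaf of $Z$, hence $Z \neq \emptyset$). Your intermediate step showing that the smoothness hypothesis forces $v_{Z'}=0$ is correct but not strictly needed, since the claim's proof uses the smoothness of the one-forms directly (via Remark \ref{rmk:smoothhamiltonians}) rather than the vanishing of the modular weight.
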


When $Z$ is not connected, there is a modular weight $v_Z$ for each connected component of $Z$, but we will see in Claim \ref{clm:vflips} that they are nonzero scalar multiples of one another.

\begin{proposition}\label{prop:no_twisting_occurs}
Let $(M^{2n}, Z, \omega)$ be a $b$-symplectic manifold with a toric action. Let $X$ be a representative of a primitive lattice vector of $\mathfrak{t}/\mathfrak{t}_{Z}$ that pairs positively with $v_{Z}$. Then $\langle X, v_{Z}\rangle$ equals the modular period of $Z$.
\end{proposition}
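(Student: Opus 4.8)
The plan is to leverage the circle $\mathbb{S}^1_X\subseteq\mathbb{T}^n$ generated by $X$ together with the structure of the symplectic foliation of $Z$. First I would normalize. Since $\langle X,v_Z\rangle$ depends only on the image of $X$ in $\mathfrak t/\mathfrak t_Z$, and since $\mathfrak t_Z$ is a rational hyperplane (so $\mathbb{T}_Z=\exp\mathfrak t_Z$ is a closed subtorus and $\Lambda\cap\mathfrak t_Z$ is a direct summand of the integral lattice $\Lambda$), I may replace $X$ by a primitive vector of $\Lambda$ representing the same class, so that $\mathbb{T}^n=\mathbb{T}_Z\times\mathbb{S}^1_X$. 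By Proposition \ref{prop:propertiesofmodularperiod}, $\langle X,v_Z\rangle$ is an integer multiple of the modular period $k$, and since $X$ pairs positively with $v_Z$ we have $\langle X,v_Z\rangle=mk$ with $m\ge 1$; the whole problem is to show $m=1$.

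I would record two preliminary facts. (i) \emph{$\mathbb{T}^n$ acts effectively on $Z$}: an element acting trivially on $Z$ generates a subtorus acting trivially on $Z$; if that subtorus is positive-dimensional it contains a circle acting trivially on $Z$, whose isotropy representation at any $z\in Z$ is trivial on $T_zZ$ and hence trivial on $T_zM$ (a circle acting on the line $T_zM/T_zZ$ is trivial), so by Corollary \ref{thm:equivDT} the circle is trivial near $Z$ and, $M$ being connected, on all of $M$, contradicting effectiveness; and if the element has finite order one runs the same argument, the isotropy multiplier on $T_zM/T_zZ$ being a positive real root of unity, hence $1$. (ii) \emph{$\mathbb{T}_Z$ acts torically on each leaf}: for $W\in\mathfrak t_Z$ the primitive $H_W$ has vanishing $\log$-coefficient, so it is smooth near $Z$ and $W^\#$ is tangent to the leaves of the symplectic foliation of $Z$ (Remark \ref{rmk:smoothhamiltonians}); thus $\mathbb{T}_Z$ preserves each leaf $\mathcal L$ and acts Hamiltonianly with moment map $\mu_{\mathcal L}^W=H_W|_{\mathcal L}$. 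If some $h\in\mathbb{T}_Z$ fixed one leaf pointwise, then, as $h$ commutes with the flow of $X^\#$ and this flow is transverse to the leaves and carries one leaf onto every other, $h$ would fix all of $Z$, so by (i) $h=e$; hence $\mathbb{T}_Z$ acts effectively on each leaf, and being Hamiltonian of dimension $n-1=\tfrac12\dim\mathcal L$ the action is toric.

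Now suppose $m\ge 2$ and set $\zeta:=\exp(X/m)\in\mathbb{T}^n$, so $\zeta^m=\exp(X)=e$ while $\zeta\ne e$ by primitivity. Let $\pi\colon Z\to\mathbb{R}/k$ be the base projection of the mapping torus, normalized so that $\iota_{\mathbb{L}}\omega|_Z=\pi^*(ds)$ (legitimate because $\iota_{\mathbb{L}}\omega(v_{\mathrm{mod}})=1$ and $\pi_*v_{\mathrm{mod}}=\partial_s$). Since $\iota_{\mathbb{L}}\iota_{X^\#}\omega=-\iota_{X^\#}\iota_{\mathbb{L}}\omega$ and $\iota_{\mathbb{L}}(\iota_{X^\#}\omega)|_Z=\langle X,v_Z\rangle$, the $X^\#$-flow moves the base coordinate at constant speed $-\langle X,v_Z\rangle$; hence $\pi(\zeta\cdot z)=\pi(z)-mk/m=\pi(z)-k\equiv\pi(z)$, i.e.\ $\zeta$ preserves every leaf. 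On a leaf $\mathcal L$, $\zeta|_{\mathcal L}$ is a symplectomorphism of finite order commuting with the toric $\mathbb{T}_Z$-action; being of finite order it preserves $\mu_{\mathcal L}$ (a competing moment map differs by a constant, which $m$-fold iteration kills), so it preserves each $\mathbb{T}_Z$-orbit and, on the free orbits over the interior of the moment polytope, acts as translation by a fixed $b_{\mathcal L}\in\mathbb{T}_Z$; by continuity $\zeta|_{\mathcal L}$ is the action of $b_{\mathcal L}$. Finally, since $\zeta$ commutes with the $X^\#$-flow, which intertwines the $\mathbb{T}_Z$-actions on different leaves, conjugation shows $b_{\mathcal L}$ is the same for all leaves, say $b$; then $\zeta$ and $b\in\mathbb{T}_Z$ induce the same transformation of $Z$, so by (i) $\zeta=b\in\mathbb{T}_Z$. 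But $\exp(X/m)\in\mathbb{T}_Z$ forces $\tfrac1m\,[X]\in\Lambda/(\Lambda\cap\mathfrak t_Z)$, contradicting primitivity of $[X]$. Hence $m=1$, i.e.\ $\langle X,v_Z\rangle=k$.

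The crux — and the only place the toric hypothesis is genuinely used beyond Proposition \ref{prop:propertiesofmodularperiod} — is the step showing that the finite-order symmetry $\zeta$ lands in $\mathbb{T}_Z$ leafwise, \emph{with the same element on every leaf}: this is exactly what collapses the $m$-fold ambiguity permitted by the integrality in Proposition \ref{prop:propertiesofmodularperiod} down to none. (One can instead try to argue via the degree of $\pi$ restricted to an $\mathbb{S}^1_X$-orbit through a generic point of $Z$, using effectiveness of the induced action on $Z/\mathcal F\cong\mathbb{R}/k$; but forcing that degree to be $1$ meets the same monodromy obstruction, so the leafwise analysis appears unavoidable.)
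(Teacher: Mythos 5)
Your argument is correct, and at the decisive step it takes a genuinely different route from the paper's. Both proofs share the same skeleton: reduce via Proposition \ref{prop:propertiesofmodularperiod} to showing that the $X^{\#}$-orbits wind around the base of the mapping torus exactly once, and use effectiveness together with the equivariant linearization (Corollary \ref{thm:equivDT}) to kill the extra winding. The paper looks at the first return time $p$ to a leaf $\mathcal{L}$, realizes the orbit-preserving return map as the time-$1$ flow of a Hamiltonian vector field on $\mathcal{L}$ (citing the proof of Proposition 6.4 in \cite{lertol}), enlarges the action to a Hamiltonian $\mathbb{T}_Z\times\SSS^1$-action on the $(2n-2)$-dimensional leaf, extracts a trivially acting circle from the dimension bound, and after replacing $X$ by $X+Y$ with $Y\in\mathfrak{t}_Z$ concludes $p=1$ from effectiveness. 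You instead exploit that $\zeta=\exp(X/m)$ has finite order: a finite-order, orbit-preserving, equivariant symplectomorphism of a toric leaf must be translation by a single $m$-torsion element $b\in\mathbb{T}_Z$, and conjugating by the $X^{\#}$-flow propagates the same $b$ to every leaf, so $\zeta\in\mathbb{T}_Z$, contradicting primitivity of $[X]$. This buys you independence from the Hamiltonian-realization lemma and the dimension count, at the price of leaning harder on the rigidity of finite-order symmetries; note that finiteness of the order is essential at the one point you state rather than justify, namely that $b_{\mathcal{L}}$ is \emph{constant} on the set of free orbits --- without finite order, an equivariant symplectomorphism preserving $\mu_{\mathcal{L}}$ can be $(\mu,\phi)\mapsto(\mu,\phi+df(\mu))$ in action-angle coordinates, which is not a fixed translation; here you should add the one line that $b(x)^m=e$ forces $b$ to take values in a discrete set, hence to be constant on the connected set of free points. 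With that line added, and with the routine verification that an element of $\mathbb{T}^n$ acting trivially on $Z$ is trivial (your fact (i), which also needs the finite-order case of the linearization, covered by Theorem \ref{thm:equivDTgeneralG}), the proof is complete.
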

\begin{proof}
By Proposition \ref{prop:propertiesofmodularperiod}, it suffices to prove that a time-1 trajectory of $X^{\#}$ that starts on $Z$, when projected to the $\mathbb{S}^1$ base of the mapping torus $Z$, travels around the loop once. Let $p \in \mathbb{R}^+$ be the smallest number such that $\Phi_p^{X}(\mathcal{L}) = \mathcal{L}$, where $\Phi_p^{X^{\#}}$ is the time-$p$ flow of $X^{\#}$. The condition that $\omega(X^{\#}, Y^{\#}) = 0$ for all $Y \in \mathfrak{t}_{Z}$ implies that the symplectomorphism $\restr{\Phi_p^{X^{\#}}}{\mathcal{L}}$ preserves the $\mathbb{T}_{Z}$-orbits of $\mathcal{L}$. We can realize any such symplectomorphism as the time-1 flow of a Hamiltonian vector field $v$ on the symplectic leaf $(\mathcal{L}, \omega_{\mathcal{L}})$ (see, for example, the proof of Proposition 6.4 in \cite{lertol}). The product of the $\mathbb{T}_{Z}$ action with the flow of $p^{-1}v$ defines a Hamiltonian $\mathbb{T}_{Z} \times \mathbb{S}^1 \cong \mathbb{T}^n$ action on $(\mathcal{L}, \omega_{\mathcal{L}})$, so there exists $\mathbb{S}^1 \subseteq \mathbb{T}_Z \times \mathbb{S}^1$ that acts trivially on $\mathcal{L}$. Since the $\mathbb{T}_{Z}$ action is effective, this $\mathbb{S}^1$ is not a subset of $\mathbb{T}_{Z}$. Therefore we may assume, after replacing $X$ with $X + Y$ for some $Y \in \mathfrak{t}_{Z}$, that the time-$p$ flow of $X^{\#}$ is the identity on $\mathcal{L}$. Then, for any $z \in \mathcal{L}$, the isotropy representation of the time-$p$ flow of $X^{\#}$ would be the identity on $T_zM$, proving (by Corollary \ref{thm:equivDT}) that the time-$p$ flow of $X^{\#}$ is the identity in a neighborhood of $z$. By effectiveness, $p = 1$.
\end{proof}

Proposition \ref{prop:no_twisting_occurs} implies that the trajectories of $X^{\#}$ on $Z$ travel around the $\mathbb{S}^1$ base of the mapping torus $Z$ exactly once. Because $X^{\#}$ is periodic and a Poisson vector field, its flow defines a product structure on $Z$.

\begin{corollary}\label{cor:product} Let $(M^{2n}, Z, \omega)$ be a $b$-symplectic manifold with a toric action and $\mathcal{L}$ a symplectic leaf of $Z$. Then $Z \cong \mathcal{L} \times \mathbb{S}^1.$
\end{corollary}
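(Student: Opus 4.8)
The plan is to realize the diffeomorphism $Z\cong\mathcal{L}\times\SSS^1$ as the orbit decomposition of a free circle action on $Z$ cut out by the torus action. Take $X\in\mathfrak{t}$ as in Proposition \ref{prop:no_twisting_occurs}, a representative of a primitive lattice vector of $\mathfrak{t}/\mathfrak{t}_{Z}$; since $\mathfrak{t}_{Z}$ is a rational hyperplane (Claim \ref{claim:vznonzero} together with the rationality of the slope of $v_{Z}$), we may take $X$ to be a primitive lattice vector of $\mathfrak{t}$ itself, so that $\SSS^1:=\exp(\RR X)\subseteq\TT^n$ and $X^{\#}$ has period $1$ on $M$. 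The vector field $X^{\#}$ is tangent to $Z$ because the action preserves $Z$, and by Proposition \ref{prop:no_twisting_occurs} and the remark following it, the $X^{\#}$-trajectory through any point of $Z$ winds exactly once in time $1$ around the base circle $\RR/k$ of the mapping torus $Z$.

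The one substantive point is that $X^{\#}|_{Z}$ is \emph{everywhere transverse} to the symplectic foliation of $Z$, which I would check by pairing $X^{\#}$ with the one-form $\iota_{\LL}\omega$ on $Z$. The computation in the proof of Proposition \ref{prop:propertiesofmodularperiod} gives, pointwise on $Z$, that $\iota_{\LL}\omega$ evaluated on the Hamiltonian vector field of a ${^b}C^{\infty}$ function $c\log|y|+g$ equals $-c$; applied to $X^{\#}$ this yields $\iota_{\LL}\omega(X^{\#})|_{Z}=-\langle v_{Z},X\rangle$, which is nonzero because $v_{Z}\neq0$ (Claim \ref{claim:vznonzero}) and $X\notin\mathfrak{t}_{Z}$. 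The same identity with $c=0$ shows that $\iota_{\LL}\omega|_{Z}$ annihilates the Hamiltonian vector field of every smooth function, hence (Remark \ref{rmk:smoothhamiltonians}) every vector tangent to a leaf; since $\iota_{\LL}\omega|_{Z}$ is nonzero on $X^{\#}$ at every point while vanishing on the codimension-one leaf directions, $X^{\#}$ is nowhere tangent to a leaf. Combined with the winding statement, the projection of an $X^{\#}$-trajectory to $\RR/k$ is now an immersed degree-one loop, hence embedded, so it meets the point of $\RR/k$ corresponding to $\mathcal{L}$ exactly once; thus each $X^{\#}$-orbit meets $\mathcal{L}$ transversally in exactly one point. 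Consequently the $\SSS^1$-action on $Z$ is free: every orbit meets $\mathcal{L}$; a point of $\mathcal{L}$ with full stabilizer would be a zero of $X^{\#}$, and one with stabilizer $\mathbb{Z}/m$ ($m\geq2$) would return to $\mathcal{L}$ at a time strictly between $0$ and $1$ -- both impossible.

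It remains to assemble this. The free $\SSS^1$-action makes $Z\to Z/\SSS^1$ a principal $\SSS^1$-bundle, and since every orbit meets $\mathcal{L}$ in exactly one point, the inclusion $\mathcal{L}\hookrightarrow Z$ descends to a smooth bijection $\mathcal{L}\to Z/\SSS^1$ that is an immersion by transversality, hence a diffeomorphism; therefore $\mathcal{L}\hookrightarrow Z$ is a global section of the bundle. A principal circle bundle admitting a section is trivial, so $Z\cong\mathcal{L}\times\SSS^1$. (Alternatively one avoids bundle theory and exhibits the diffeomorphism directly as $(\ell,[t])\mapsto\Phi_{t}^{X^{\#}}(\ell)$, checking surjectivity from the winding property, injectivity and the local-diffeomorphism property from transversality, and well-definedness after first replacing $X$ by $X+Y$ with a suitable $Y\in\mathfrak{t}_{Z}$ so that $\Phi_{1}^{X^{\#}}$ restricts to the identity on $\mathcal{L}$, as in the proof of Proposition \ref{prop:no_twisting_occurs}.) The transversality step of the second paragraph is the only genuine obstacle; the remainder is formal.
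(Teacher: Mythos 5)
Your proof is correct and follows essentially the same route as the paper, which justifies the corollary in one line by noting that $X^{\#}$ is periodic, Poisson, and (by Proposition \ref{prop:no_twisting_occurs}) winds once around the base of the mapping torus, so its flow trivializes $Z$. Your added verification that $\iota_{\mathbb{L}}\omega(X^{\#})|_{Z}=-\langle v_{Z},X\rangle\neq 0$ makes explicit the transversality to the leaves that the paper leaves implicit, but it is the same argument.
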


When $Z$ is not connected, this implies that each component $Z' \subseteq Z$ is of the form $\mathcal{L}'\times \SSS^1$, for possibly distinct $\mathcal{L}'$. We will see that the existence of a global toric action forces all $\mathcal{L}'$ to be identical.

We are nearly ready to prove that in a neighborhood of $Z$ the toric action splits as the product of a $\mathbb{T}_{Z}^{n-1}$-action and an $\SSS^1$-action. We preface this by studying a related example in classic symplectic geometry that will give us intuition for the proofs of Lemma \ref{prop:technicalprop} and Proposition \ref{prop:localmomentmap}.

Consider the symplectic manifold $M = \SSS^2 \times \SSS^2, \omega = dh_1 \wedge d\theta_1 + dh_2 \wedge d\theta_2$ with a Hamiltonian $\mathbb{T}^2$-action defined by
\[
(t_1, t_2) \cdot (h_1, \theta_1, h_2, \theta_2) = (h_1, \theta_1 + t_1, h_2, \theta_2 + t_2).
\]
Let $X_1, X_2\in\mathfrak{t}$ be such that $X_1^{\#} = \frac{\partial}{\partial \theta_1}$ and $X_2^{\#} = \frac{\partial}{\partial \theta_2}$. The moment map of this action is given by $(h_1, h_2)$, its image is the square $\Delta = [-1, 1]^2$.

Consider the hypersurfaces $Z_1 = \{h_2 = 0\}$ and $Z_2 = \{h_1 + h_2 = -1\}$ in $M$, see Figure \ref{fig:hypersurfacesinS2S2}.
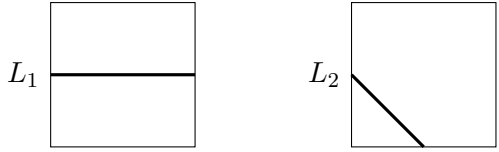
\begin{figure}[ht]
\centering
\begin{tikzpicture} [scale = 0.8]
\pgfmathsetmacro{\boxradius}{1.2}
\pgfmathsetmacro{\xshift}{5}

\draw (-\boxradius, -\boxradius) -- (\boxradius, -\boxradius) -- (\boxradius, \boxradius) -- (-\boxradius, \boxradius) -- cycle;
\draw (\xshift-\boxradius, -\boxradius) -- (\xshift+\boxradius, -\boxradius) -- (\xshift+\boxradius, \boxradius) -- (\xshift-\boxradius, \boxradius) -- cycle;

\draw[very thick] (-\boxradius, 0) node[left] {$L_1$}-- (\boxradius, 0);
\draw[very thick] (\xshift - \boxradius, 0) node[left] {$L_2$} -- (\xshift, -\boxradius);

\end{tikzpicture}
\caption{Hypersurfaces in $\SSS^2 \times \SSS^2$: $Z_1 = \mu^{-1}(L_1)$ and $Z_2 = \mu^{-1}(L_2)$.}
\label{fig:hypersurfacesinS2S2}
\end{figure}
Near $L_1$, the polytope $\Delta$ is locally the product $L_1 \times (-\varepsilon, \varepsilon)$; near $L_2$, it is not locally a product. The vector field $u = \frac{\partial}{\partial h_2}$ in a neighborhood of $Z_1$ has the property that $dh_2(u) = 1$, and $\omega(Y^{\#}, u) = 0$ for all $Y$ in the hyperplane of $\mathfrak{t}$ spanned by $X_1$. If we flow $Z_1$ along the vector field $u$, the image under $\mu$ would consist of the line segment $L_1$ moving with constant velocity in the direction perpendicular to $\langle X_1 \rangle$, showing once again that $\Delta$ is locally the product $L_1 \times (-\varepsilon, \varepsilon)$ near $L_1$. In contrast, there is no vector field $u'$ in a neighborhood of $Z_2$ such that $d(h_1 + h_2)(u') = 1$ and $\omega(Y^{\#}, u') = 0$ for all $Y$ in a hyperplane of $\mathfrak{t}$, reflecting the fact that $\Delta$ is not locally a product near $L_2$. The reason that no such $u'$ exists is because every hyperplane of $\mathfrak{t}$ contains some $Y$ such that $\iota_{Y^{\#}}\omega$ is a multiple of $d(h_1 + h_2)$ somewhere along $Z_2$ (making the condition that $d(h_1 + h_2)(u') = 1$ incompatible with $\omega(Y^{\#}, u') = 0$). In other words, the fact that $\Delta$ is locally a product near $L_1$ is reflected in the fact that there is a hyperplane in $\mathfrak{t}$ such that $\textrm{ker}(\iota_{Y^{\#}}\omega_z) \neq T_zZ$ for all $z \in Z$ and all $Y$ in this hyperplane.

In a neighborhood of the exceptional hypersurface $Z$ of a $b$-manifold, a toric action will always behave similarly to the $Z_1$ example: the hyperplane $\mathfrak{t}_{Z} \subseteq \mathfrak{t}$ satisfies the property $\textrm{ker}(\iota_{Y^{\#}}\omega_z) \neq T_zZ$ for all $z \in Z$ and $Y \in \mathfrak{t}_{Z}$. This fact is the content of Lemma \ref{prop:technicalprop} and will play an important role in the proof of Proposition \ref{prop:localmomentmap}.

\begin{lemma}\label{prop:technicalprop}
Let $k < n$ and consider a Hamiltonian $\mathbb{T}^k$-action on $(M^{2n}, Z, \omega)$ for which $\iota_{X^{\#}}\omega \in \Omega^1(M)$ for each $X \in \mathfrak{t}$. Then for any $z \in Z$ and $X \in \mathfrak{t}$, $\ker(\iota_{X^{\#}}\omega_z) \neq T_zZ$.
\end{lemma}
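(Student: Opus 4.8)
The plan is to argue by contradiction. Suppose that $\ker(\iota_{X^{\#}}\omega_z) = T_zZ$ for some $z \in Z$ and some $X \in \mathfrak{t}$. I would first show that this forces $z$ to be a fixed point of the flow of $X^{\#}$, and then derive a contradiction by combining the equivariant Darboux theorem (Theorem \ref{thm:equivDTgeneralG}) with the compactness of the orbits of $\TT^k$.

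For the first step, choose local coordinates $(x_1,y_1,\dots,x_{n-1},y_{n-1},u,t)$ near $z$ in which $Z=\{u=0\}$ and $\omega=\sum_i dx_i\wedge dy_i+\frac{1}{u}\,du\wedge dt$ (the standard local form of a $b$-symplectic structure near $Z$; cf.\ \cite{guimipi12}). Because the action preserves $Z$, the vector field $X^{\#}$ is tangent to $Z$, so its $\partial_u$-component is divisible by $u$; computing $\iota_{X^{\#}}\omega$ and invoking the hypothesis that this $1$-form is \emph{smooth} then forces the $\partial_t$-component of $X^{\#}$ to be divisible by $u$ as well. Writing $X^{\#}=u\,a\,\partial_u+u\,b\,\partial_t+\sum_i(p_i\partial_{x_i}+q_i\partial_{y_i})$ with $a,b,p_i,q_i$ smooth, one gets $\iota_{X^{\#}}\omega=a\,dt-b\,du+\sum_i(p_i\,dy_i-q_i\,dx_i)$, and the condition $\ker(\iota_{X^{\#}}\omega_z)=T_zZ=\ker(du_z)$ says precisely that $\iota_{X^{\#}}\omega_z$ is a nonzero multiple of $du_z$, i.e.\ $a(z)=p_i(z)=q_i(z)=0$ while $b(z)\neq 0$. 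In particular $X^{\#}_z=0$, so $z$ is a fixed point of the compact group $G:=\overline{\{\exp(\tau X):\tau\in\RR\}}\subseteq\TT^k$.

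For the second step, apply Theorem \ref{thm:equivDTgeneralG} to $G$ at the fixed point $z$: there are coordinates $(x_1',y_1',\dots,x_{n-1}',y_{n-1}',w,s)$ centered at $z$ in which the $G$-action, hence the flow of $X^{\#}$, is linear, $Z=\{w=0\}$, and $\omega=\sum_i dx_i'\wedge dy_i'+\frac{1}{w}\,dw\wedge ds$. Running the computation of the previous paragraph again, now with linear coefficients, yields $X^{\#}=\rho\,w\,\partial_w+\varsigma\,w\,\partial_s+V$ with $\rho,\varsigma\in\RR$ and $V$ a linear vector field involving only the $\partial_{x_i'},\partial_{y_i'}$, so that $\iota_{X^{\#}}\omega=\rho\,ds-\varsigma\,dw+\iota_V\!\left(\sum_i dx_i'\wedge dy_i'\right)$. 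Pick a point $p$ with $w(p)=w_0\neq 0$ and all other coordinates $0$, close enough to $z$ that its (compact) $G$-orbit lies inside the chart; this orbit is disjoint from $Z$. Along the $X^{\#}$-orbit through $p$ the $w$-coordinate equals $e^{\rho\tau}w_0$, which must stay bounded and bounded away from $0$, forcing $\rho=0$; then the $s$-coordinate equals $\varsigma w_0\tau$, which must stay bounded, forcing $\varsigma=0$. Hence $\iota_{X^{\#}}\omega$ has no $dw$- or $ds$-term and, since the linear field $V$ vanishes at $z$, $\iota_{X^{\#}}\omega_z=0$ --- contradicting $\iota_{X^{\#}}\omega_z=-b(z)\,du_z\neq 0$ established above.

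The essential point, and the part I expect to be the main obstacle, is that in $b$-symplectic geometry a fixed point of the action need \emph{not} be a critical point of the smooth Hamiltonian: $\iota_{X^{\#}}\omega_z$ can genuinely be a nonzero covector lying in the conormal direction of $Z$, which is exactly the degeneracy the lemma rules out. So the statement cannot be obtained by symplectic linear algebra alone; one has to use that $X^{\#}$ generates a torus action, and the equivariant Darboux theorem is precisely what lets the resulting compactness-of-orbits argument be carried out in the cleanest possible local model. (The argument sketched here does not use the hypothesis $k<n$; it is inherited from the setting where the lemma is applied.)
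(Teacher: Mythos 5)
Your proof is correct, but it takes a genuinely different route from the paper's. The paper argues directly in $\mathbb{T}^k$-invariant coordinates near $z$ (which need not be a fixed point): it chooses an invariant vector field $\frac{\partial}{\partial t}$ transverse to $Z$ together with a function $\theta$ satisfying $i_Z^*(d\theta)=\iota_{\mathbb{L}}\omega$, and uses the identity $\frac{\partial}{\partial t}\left(d\theta(X^{\#})\right)=0$ to conclude, by a short local computation that is left to the reader, that $\ker(\iota_{X^{\#}}\omega_z)\supseteq T_zZ$ forces $\iota_{X^{\#}}\omega_z=0$; compactness of the torus enters only through the averaging needed to produce the invariant transversal $\frac{\partial}{\partial t}$. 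You instead observe that the hypothetical degeneracy forces $X^{\#}_z=0$, pass to the compact closure $G$ of $\{\exp(\tau X)\}$, linearize with the equivariant Darboux theorem (Theorem \ref{thm:equivDTgeneralG}, which precedes the lemma and does not depend on it, so there is no circularity), and then kill the coefficients $\rho$ and $\varsigma$ by noting that $e^{\rho\tau}w_0$ and $\varsigma w_0\tau$ would otherwise carry the compact orbit $G\cdot p$ out of the chart. Both arguments prove the same slightly stronger fact --- that $\iota_{X^{\#}}\omega_z$ is never a nonzero multiple of the conormal to $Z$ --- and both are sound. Yours is longer but makes explicit the computation the paper compresses into one sentence, and it isolates precisely where periodicity of the orbits is used; your closing remark is the right diagnosis, since the statement genuinely fails for non-periodic symplectic flows with smooth contraction. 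The paper's version is shorter and avoids the detour through the fixed-point case. Your observation that the hypothesis $k<n$ is not needed is also consistent with the paper, whose proof never invokes it.
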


\begin{proof}
Pick coordinates $(t, \theta, x_1, \dots, x_{2n-2})$ centered around a point $z \in Z$ such that $t$ is a defining function for $Z$, $\frac{\partial}{\partial t}$ is a $\mathbb{T}^k$-invariant vector field, and $i_Z^*(d\theta) = \iota_{\mathbb{L}}(\omega)$. Because $\frac{\partial}{\partial t}$ is invariant, for any $X \in \mathfrak{t}$ we have $0 = \left[\frac{\partial}{\partial t}, X^{\#}\right](\theta) = \frac{\partial}{\partial t} \left( d\theta (X^{\#}) \right)$.
If $\textrm{ker}(\iota_{X^{\#}}\omega_z)$ contains $T_zZ$, then a calculation in local coordinates shows that $\iota_{X^{\#}}\omega$ must in fact vanish completely at $z$. That is, if $\ker(\iota_{X^{\#}}\omega_z) \supseteq T_zZ$, then $\ker(\iota_{X^{\#}}\omega_z) = T_zM$.
\end{proof}

\begin{proposition}\label{prop:localmomentmap}
Let $(M^{2n}, Z, \omega)$ be a $b$-symplectic manifold with a toric action and $\mathcal{L}$ a leaf of its symplectic foliation, and $v_Z$ the modular weight of $Z$. Pick a lattice element $X \in \mathfrak{t}$ that represents a generator of $\mathfrak{t} / \mathfrak{t}_Z$ and pairs positively with $v_{Z}$.

Then there is a neighborhood $\mathcal{L} \times \SSS^1 \times (-\varepsilon, \varepsilon) \cong \mathcal{U} \subseteq M$ of $Z$ such that the $\mathbb{T}^n$-action on $\mathcal{U}\setminus Z$ has moment map
\begin{equation}\label{eqn:localmm}
\mu_{\mathcal{U}\setminus Z}: \mathcal{L} \times \SSS^1 \times \left((-\varepsilon, \varepsilon)\setminus\{0\}\right) \rightarrow \mathfrak{t}^* \cong \mathfrak{t}_{Z}^* \times \mathbb{R},\text{\,\,}
(\ell, \rho, t) \mapsto (\mu_{\mathcal{L}}(\ell), c\log|t|)
\end{equation}
where $c$ is the modular period of $Z$, the map $\mu_{\mathcal{L}}: \mathcal{L} \rightarrow \mathfrak{t}_{Z}^*$ is a moment map for the $\mathbb{T}_{Z}^{n-1}$-action on $\mathcal{L}$, and the isomorphism $\mathfrak{t}^* \cong \mathfrak{t}_{Z}^* \times \mathbb{R}$ is induced by the splitting $\mathfrak{t} \cong \mathfrak{t}_Z \oplus \langle X \rangle $.
\end{proposition}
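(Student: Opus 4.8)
The plan is to build the product neighborhood of $Z$ out of the ingredients already in hand, then verify that the moment map has the claimed form component by component. First I would set up the product structure: by Corollary \ref{cor:product} we know $Z \cong \mathcal{L} \times \SSS^1$, and the $\SSS^1$ factor is the base of the mapping torus, realized concretely as the flow of $X^{\#}$, which is periodic by Proposition \ref{prop:no_twisting_occurs}. Since $X^{\#}$ is a Poisson vector field tangent to $Z$, its flow together with a choice of transverse coordinate $t$ (a defining function for $Z$, which can be chosen $\mathbb{T}^n$-invariant by averaging) gives an identification of a neighborhood $\mathcal{U}$ of $Z$ with $\mathcal{L} \times \SSS^1 \times (-\varepsilon, \varepsilon)$. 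The $\mathbb{T}_Z^{n-1}$-action (the action of the subtorus $\mathfrak{t}_Z = \ker v_Z$, which is a genuine subtorus because $v_Z$ has rational slope by Claim \ref{claim:vznonzero} and the remark preceding it) preserves each leaf $\mathcal{L} \times \{\rho\} \times \{t\}$, and the condition $\omega(X^{\#}, Y^{\#}) = 0$ for $Y \in \mathfrak{t}_Z$ ensures the $\mathbb{T}_Z^{n-1}$-action commutes appropriately with the $\SSS^1$ direction; so in these coordinates the full $\mathbb{T}^n = \mathbb{T}_Z^{n-1} \times \SSS^1$ action is the product of a $\mathbb{T}_Z^{n-1}$-action on $\mathcal{L}$ with the rotation of the $\rho$-coordinate.

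Next I would compute the moment map on $\mathcal{U} \setminus Z$. For $Y \in \mathfrak{t}_Z$, the one-form $\iota_{Y^{\#}}\omega$ is smooth (by definition of $\mathfrak{t}_Z$) and, using Lemma \ref{prop:technicalprop}, $\ker(\iota_{Y^{\#}}\omega_z) \neq T_z Z$ at every $z \in Z$; this is what lets one arrange, possibly after shrinking $\mathcal{U}$ and adjusting the transverse coordinate $t$ so that the leaves $\{t = \text{const}\}$ are $\omega$-symplectic and carry the restricted form $\omega_{\mathcal{L}}$, that $\iota_{Y^{\#}}\omega$ restricted to such a leaf agrees with $\iota_{Y^{\#}}\omega_{\mathcal{L}}$, whose primitive is $\mu_{\mathcal{L}}^Y$. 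Here I would invoke the equivariant $b$-Moser / Darboux-type normalization (Theorem \ref{equivglobalmoser}, in the spirit of its use in Theorem \ref{thm:equivDTgeneralG}) to put $\omega$ in the normal form $\omega_{\mathcal{L}} + c\,\frac{dt}{t}\wedge d\rho$ on $\mathcal{U}$, with $c$ yet to be identified; the hypothesis $\omega(X^{\#}, Y^{\#}) = 0$ is exactly what guarantees no cross terms between the $\mathcal{L}$-directions and the $(t,\rho)$-plane survive. For the $X$-component: $\iota_{X^{\#}}\omega = \iota_{\partial/\partial\rho}(c\,\frac{dt}{t}\wedge d\rho) = c\,\frac{dt}{t} = d(c\log|t|)$, so $\mu^X = c\log|t|$.

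The remaining point is the identification of the constant $c$ with the modular period of $Z$. This follows from Proposition \ref{prop:propertiesofmodularperiod}: the function $H_X = c\log|t| + (\text{smooth})$ is a ${^b}C^\infty$ primitive of $\iota_{X^{\#}}\omega$ whose Hamiltonian $X^{\#}$ is $1$-periodic (by Proposition \ref{prop:no_twisting_occurs}) and whose orbits on $Z$ are homotopic to the loop $\gamma$ winding once around the mapping-torus base (again Proposition \ref{prop:no_twisting_occurs}); hence $-c$ equals $-(\text{modular period})$, wait — more precisely the third bullet of Proposition \ref{prop:propertiesofmodularperiod} gives that the modular period equals $-c$, but one must track the sign convention coming from the choice that $X$ pairs \emph{positively} with $v_Z$, which flips this to $+c$. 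Assembling the two components gives $(\ell,\rho,t) \mapsto (\mu_{\mathcal{L}}(\ell), c\log|t|)$ under the splitting $\mathfrak{t}^* \cong \mathfrak{t}_Z^* \times \mathbb{R}$ dual to $\mathfrak{t} \cong \mathfrak{t}_Z \oplus \langle X\rangle$, as claimed.

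I expect the main obstacle to be the normalization step: proving that $\omega$ can be brought to the split form $\omega_{\mathcal{L}} + c\,\frac{dt}{t}\wedge d\rho$ on a \emph{full} neighborhood of $Z$ equivariantly, rather than just along $Z$ itself. One has the pointwise/along-$Z$ statement for free from the $b$-cotangent description $\omega = \frac{dy}{y}\wedge\alpha + \beta$ together with $\omega(X^{\#},Y^{\#})=0$ and Lemma \ref{prop:technicalprop}, but upgrading to a neighborhood requires a careful equivariant Moser argument where the homotopy of $b$-symplectic forms stays within a fixed $b$-cohomology class and invariant primitives are produced by averaging — exactly the mechanism of Theorem \ref{equivglobalmoser}, but one must check the hypotheses (compactness is replaced by working in a tubular neighborhood with compact fibers over $Z$, and the primitive must be chosen to vanish suitably so that the Moser vector field is tangent to $Z$ and preserves the product structure). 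Getting the bookkeeping of the splitting $\mathfrak{t}^* \cong \mathfrak{t}_Z^* \times \mathbb{R}$ and the sign of $c$ exactly right is the other place where care is needed.
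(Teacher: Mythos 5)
Your outline matches the paper's up to the crucial step, but the way you compute the $\mathfrak{t}_Z$-component of the moment map has a genuine gap. You route that computation through an equivariant normal form $\omega = \omega_{\mathcal{L}} + c\,\frac{dt}{t}\wedge d\rho$ on a full neighborhood of $Z$, and you yourself flag that establishing this normal form equivariantly on a neighborhood (rather than just along $Z$) is ``the main obstacle.'' That obstacle is real: the normal form is strictly stronger than Proposition \ref{prop:localmomentmap} --- it is essentially the later local-model statement (Proposition \ref{prop:local_model}), whose proof in the paper requires its own careful Moser argument (writing $\varphi^*\omega - \omega_{\textrm{loc}}$ as a pullback from the moment codomain, etc.) and which in fact \emph{uses} the product coordinates constructed here. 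So as written, your proof defers its central step to an unproved and harder statement, and the hypotheses of Theorem \ref{equivglobalmoser} (a path of $b$-symplectic forms in a fixed $b$-cohomology class on a compact manifold) are not checked in your setting.

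The paper avoids the normal form entirely with a softer argument that you should adopt. Using Lemma \ref{prop:technicalprop}, choose a $\mathbb{T}^n$-invariant vector field $u$ near $Z$ with $dy(u)=1$ \emph{and} $\iota_{Y^{\#}}\omega(u)=0$ for all $Y\in\mathfrak{t}_Z$ (the second condition is the essential one, and it is not guaranteed by merely picking an invariant defining function $t$ as in your first paragraph). The chart $\phi(\ell,\rho,t)=\Phi^u_t\circ\Phi^{X^{\#}}_{\rho}(\ell)$ then has the property that the retraction $p_{\mathcal{L}}$ onto $\mathcal{L}$ is a composition of flows of $u$ and $X^{\#}$, both of which preserve the closed one-forms $\iota_{Y^{\#}}\omega$; hence $\iota_{Y^{\#}}\omega = p_{\mathcal{L}}^*(\iota_{Y^{\#}}\omega)$, so its primitive is $\mu_{\mathcal{L}}^Y\circ p_{\mathcal{L}}$, which depends only on $\ell$. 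No statement about $\omega$ itself splitting is needed --- only the contractions $\iota_{Y^{\#}}\omega$ and $\iota_{X^{\#}}\omega$ must be identified. Your identification of $c$ with the modular period via Propositions \ref{prop:propertiesofmodularperiod} and \ref{prop:no_twisting_occurs} is fine (the sign bookkeeping you worry about is resolved by the convention that $X$ pairs positively with $v_Z$), and your description of the product action is consistent with the paper's.
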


\begin{proof}
Observe that the splitting $\mathfrak{t} \cong \mathfrak{t}_Z \oplus \langle X \rangle $ induces a splitting $\mathbb{T}^n \cong \mathbb{T}^{n-1} \times \SSS^1$. Pick a primitive $f$ of $\iota_{X^{\#}}\omega$. For a suitable neighborhood $\mathcal{U}$ of $Z$, let $y: \mathcal{U} \rightarrow \mathbb{R}$ be a defining function for $Z$ with the property that $f = c\textrm{log}|y|$ on $\mathcal{U}\setminus Z$. Because $f$ is $\mathbb{T}^n$-invariant, so too is $y$. Our first goal is to pick a vector field $u$ in a neighborhood of $Z$ with the following three properties:
\begin{enumerate}
\item $dy(u) = 1$
\item $\iota_{Y^\#}\omega (u) = 0$ for all $Y \in \mathfrak{t}_Z$
\item $u$ is $\mathbb{T}^n$-invariant
\end{enumerate}

To show that a vector field exists that satisfies conditions $(1)$ and $(2)$ simultaneously, it suffices to observe that for each $z \in Z$ and $Y \in \mathfrak{t}_Z$, $\textrm{ker}(\iota_{Y^{\#}}\omega_z) \neq T_zZ$ by Lemma \ref{prop:technicalprop}. Let $u$ be a vector field satisfying $(1)$ and $(2)$. Because $dy$ and each $\iota_{Y^\#}\omega$ are $\mathbb{T}^n$-invariant, we can average $u$ by the $\mathbb{T}^n$-action   without disturbing properties $(1)$ and $(2)$. By replacing $u$ with its $\mathbb{T}^n$-average, we assume that $u$ is $\mathbb{T}^n$-invariant. Let $\Phi^u_t$ and $\Phi^{X^{\#}}_t$ be the time-t flows of $u$ and $X^{\#}$ respectively. Then, using Corollary \ref{cor:product}, the map
$$\phi: \mathcal{L} \times \SSS^1 \times (-\varepsilon, \varepsilon) \rightarrow \mathcal{U}, \text{\,\,\,\,\,\,}(\ell, \rho, t) \mapsto \Phi^u_t \circ \Phi^{X^{\#}}_\rho (\ell)$$
is a diffeomorphism for sufficiently small $\varepsilon$. Let $p$ and $t$ be the projections of $\mathcal{L} \times \SSS^1 \times (-\varepsilon, \varepsilon)$ onto $Z \cong \mathcal{L} \times \SSS^1$ and $(-\varepsilon, \varepsilon)$ respectively. To study the induced $\mathbb{T}^n$-action on the domain of $\phi$, fix some $(s, g) = (\exp(kX), \exp(Y)) \in \SSS^1 \times \mathbb{T}^{n-1}_Z$ and recall that since $u$ is $\mathbb{T}^n$-invariant, its flows commute with the flows of all $\{X^{\#} \mid X \in \mathfrak{t}\}$.
{We can check that in these coordinates the action is given by}
$$
{\phi(g \cdot_{\mathcal{L}} \ell, \rho + s, t) = \Phi^u_t \circ \Phi^{X^{\#}}_{\rho + s} (g \cdot_{\mathcal{L}} \ell) = (s, g) \cdot \phi(\ell, \rho, t),}
$$
{where $\cdot_{\mathcal{L}}$ denotes the $\mathbb{T}_Z^{n-1}$-action on $\mathcal{L}$.}

We will show that the moment map for this action is given by (\ref{eqn:localmm}). By construction, $\mu_{\mathcal{U}\setminus Z}^{X} \in {^b}C^{\infty}\left(\mathcal{L} \times \SSS^1 \times \left((-\varepsilon, \varepsilon)\setminus\{0\}\right)\right)$ is given by $c\log|t|$ as desired. To prove that $\iota_{Y^{\#}}(\phi^*\omega) = d\mu_{\mathcal{U}\setminus Z}^Y$ for $Y \in \mathfrak{t}_Z$, we define the map
$$p_\mathcal{L}: \mathcal{U}\rightarrow \mathcal{L},\text{\,\,\,\,\,\,}\phi(\ell, \rho, t) \mapsto \ell.$$
Since the map $p_{\mathcal{L}}$ can be realized at $\phi(\ell, \rho, t)$ as the time-($-t$) flow of $u$ followed by the time-($-\rho$) flow of $X^{\#}$, both of which preserve $\iota_{Y^{\#}}\omega$, it follows that $p_{\mathcal{L}}^*(\iota_{Y^{\#}}\omega) = \iota_{Y^{\#}}\omega$. Then {$\iota_{Y^{\#}}(\phi^*\omega) = \phi^* p_{\mathcal{L}}^* (\iota_{Y^{\#}}\omega) = d\mu_{\mathcal{U}\setminus Z}^Y.$}

\end{proof}

As a result, for such a neighborhood $\mathcal{U}$ of $Z$, each half (each connected component) of the the open set $\mathcal{U}\setminus Z$ is taken under the moment map to the product of a Delzant polytope $\Delta$ with an interval of the form $(-\infty,k)$. The image of $Z$ under the moment map, were it to be defined, would be $\Delta\times\{-\infty\}$. In Sections \ref{sec:codomain} and \ref{sec:mmallofM} we will make this precise.

\subsection{Global picture}

Let $(M^{2n}, Z, \omega)$ be a $b$-symplectic manifold with a toric action. We now consider the general case, when $Z$ is not necessarily connected. For a connected component $W$ of $M \setminus Z$, we write $\mu_{W}: W \rightarrow \mathfrak{t}^*$.

\begin{claim}\label{clm:Wconvexity}
The image $\mu_W(W)$ is convex.
\end{claim}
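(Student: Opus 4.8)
The plan is to realize $\mu_W$ as an ordinary moment map for a Hamiltonian $\TT^n$-action on the non-compact symplectic manifold $(W,\omega|_W)$, to prove that this moment map is proper, and then to invoke the convexity theorem for proper Hamiltonian torus actions.

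First I would note that since $\TT^n$ is connected and preserves $Z$ it preserves $W$, and since $W\subseteq M\setminus Z$ the $b$-form $\omega$ restricts to an honest symplectic form on $W$. For each $X\in\mathfrak{t}$ the primitive $H_X\in{}^bC^\infty(M)$ of $\iota_{X^\#}\omega$ provided by the definition of a Hamiltonian action restricts to a smooth function on $W$, so the restriction $\mu_W$ of the $b$-moment map is a genuine, smooth, $\TT^n$-invariant moment map $\mu_W\colon W\to\mathfrak{t}^*$ on a connected symplectic manifold. Since $W$ is not compact, the Atiyah--Guillemin--Sternberg theorem \cite{guilleminsternberg} does not apply directly, so the claim is genuinely a noncompact convexity statement.

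Next I would prove that $\mu_W$ is proper. As $W$ is a connected component of the open set $M\setminus Z$, its topological boundary in $M$ lies in $Z$, and a standard connectedness argument (using that each component of $Z$ is two-sided) shows that $\overline{W}\cap Z$ is a union of whole connected components $Z_1,\dots,Z_m$ of $Z$; near each $Z_i$, $W$ consists of one or both of the half-collars described (componentwise) in Proposition \ref{prop:localmomentmap}, which also identifies $Z_i\cong\mathcal{L}\times\SSS^1$ as in Corollary \ref{cor:product}. On such a half-collar $\mathcal{L}\times\SSS^1\times(0,\varepsilon_i)$ one has $\mu_W(\ell,\rho,t)=(\mu_{\mathcal{L}}(\ell),\,c_i\log|t|)$ with $c_i>0$ the modular period of $Z_i$, so, writing $X$ for the vector of Proposition \ref{prop:localmomentmap} attached to $Z_i$, one gets $|\langle\mu_W,X\rangle|=c_i|\log|t||\to\infty$ as $t\to 0$. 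Hence, given a compact set $K\subseteq\mathfrak{t}^*$, the set $\mu_W^{-1}(K)$ is closed in $W$ and, by this estimate applied near each $Z_i$, is contained in the set obtained from the compact manifold-with-boundary $\overline{W}$ by deleting a small collar neighborhood of $Z$; that set is a compact subset of $W$, so $\mu_W^{-1}(K)$ is compact, and $\mu_W$ is proper.

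Finally, the claim follows from the convexity theorem for Hamiltonian torus actions with proper moment map, the noncompact refinement of \cite{guilleminsternberg}: the image of a connected symplectic manifold under a proper Hamiltonian-torus moment map is a closed convex subset of $\mathfrak{t}^*$, so in particular $\mu_W(W)$ is convex. I expect the properness step to be the main obstacle, being the only place that combines the local normal form near $Z$ with the compactness of $M$, and where one must be careful that $W$ may meet several components of $Z$, or a single component from both sides. If one prefers to avoid quoting the proper convexity theorem, an alternative is: delete collars $\{0<|t_i|<\delta\}$ of each $Z_i$ from $W$, cap off each resulting boundary component $\mathcal{L}\times\SSS^1$ by a disk bundle $\mathcal{L}\times D^2$ (the reverse of a symplectic cut, available because the residual $\SSS^1$-action there is free), apply \cite{guilleminsternberg} to the resulting closed symplectic manifold, and let $\delta\to 0$; since a segment joining two fixed points of $\mu_W(W)$ stays in a bounded region while the images of the caps recede to modular coordinate $-\infty$, for small $\delta$ that segment lies in the image of the uncapped truncation and hence in $\mu_W(W)$.
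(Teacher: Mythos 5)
Your main argument is correct but follows a genuinely different route from the paper. The paper never leaves the compact world: it truncates $W$ by performing symplectic cuts at the level sets $\{|t_i| = \varepsilon\}$ near each adjacent component $Z_i$, observes that the result $\overline{W_{\geq\varepsilon}}$ is a \emph{compact} symplectic toric manifold whose moment image is convex by the classical Atiyah--Guillemin--Sternberg theorem and is contained in $\mu_W(W)$, and then notes that any two points of $\mu_W(W)$ lie in such a truncation for $\varepsilon$ small enough, so the segment joining them lies in $\mu_W(W)$. You instead prove that $\mu_W$ is proper --- using exactly the same local input, namely the normal form of Proposition \ref{prop:localmomentmap} which forces $\langle\mu_W,X\rangle = c_i\log|t_i|\to-\infty$ near $Z_i$ --- and then invoke the convexity theorem for proper moment maps on connected noncompact symplectic manifolds. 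Your properness argument is sound (including the points you flag: that $\overline{W}\cap Z$ is a union of whole components of $Z$, and that $W$ might a priori approach a single component from both sides), and it buys you slightly more than the paper's proof, namely that $\mu_W(W)$ is a \emph{closed} convex set; the cost is that you must quote a heavier theorem (Lerman--Meinrenken--Tolman--Woodward or Hilgert--Neeb--Plank rather than \cite{guilleminsternberg}), which the paper deliberately avoids. The alternative you sketch in your last sentences --- truncate, collapse the free residual $\SSS^1$-orbits on the boundary to get a closed toric manifold, apply the compact theorem, and let the cut parameter tend to zero --- is essentially the paper's proof verbatim (what you call ``the reverse of a symplectic cut'' is precisely the symplectic cut the paper performs), so either of your two routes would serve.
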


\begin{proof}
Let $Z_1,\ldots,{Z}_{r}$ be the connected components of $Z$ which are in the closure of $W$. By Proposition \ref{prop:localmomentmap}, we can find a function $t_i$ in a neighborhood of $Z_i$ for which an $\SSS^1$ factor of the $\mathbb{T}^n$-action is generated by the Hamiltonian $c_i\log|t_i|$ for $c_i > 0$. Define $W_{\geq \varepsilon}\subseteq W$ to be $W \backslash \{|t_i| < \varepsilon\}$, let $W_{= \varepsilon}$ be its boundary, and let $W_{> \varepsilon} = W_{\geq \varepsilon} \backslash W_{= \varepsilon}$.

Performing a symplectic cut at $W_{= \varepsilon}$ gives a compact symplectic toric manifold $\overline{W_{\geq \varepsilon}}$ which has an open subset canonically identified with $W_{> \varepsilon}$. Let $\mu_{W, \varepsilon}: \overline{W_{\geq \varepsilon}} \rightarrow \mathfrak{t}^*$ be the moment map for the toric action on $\overline{W_{\geq \varepsilon}}$ that agrees with $\mu_W$ on $W_{> \varepsilon}$. To show that $\mu_W(W)$ is convex, pick points $\mu_W(p), \mu_W(q)$ in $\mu_W(W)$ and fix some $\varepsilon > 0$ small enough that that $p, q \in W_{> \varepsilon}$. Because $\overline{W_{\geq \varepsilon}}$ is compact, $\mu_{W, \varepsilon}(\overline{W_{\geq \varepsilon}})$ contains the straight line joining $\mu_W(p) = \mu_{W, \varepsilon}(p)$ and $\mu_W(q) = \mu_{W, \varepsilon}(q)$. Since $\mu_{W, \varepsilon}(\overline{W_{\geq \varepsilon}}) \subseteq \mu_{W}(W)$, the image $\mu_W(W)$ also contains the straight line joining $\mu_W(p)$ and $\mu_W(q)$.
\end{proof}

As a result of Proposition \ref{prop:localmomentmap}, for each connected component $Z'$ of $Z$ adjacent to $W$ there is a neighborhood $\mathcal{U}$ of $Z'$ such that $\mu_W(\mathcal{U} \cap W)$ is the product of a Delzant polytope with the ray generated by $-v_{Z'}$. By performing symplectic cuts near the hypersurfaces adjacent to $W$ (as in the proof of Claim \ref{clm:Wconvexity}) to partition the image of $\mu_W$ into a convex set and these infinite prisms, we see that the convex set $\mu_W(W)$ extends indefinitely in precisely the directions $-v_{Z'}$ for all components $Z'$ adjacent to $W$.

By convexity, if $\mu_W(W)$ extends infinitely far in directions $v_1$ and $v_2$, then it also extends infinitely far in every direction of the cone spanned by $v_1$ and $v_2$. Since the number of these directions is bounded by the (finite) number of components of $Z$ adjacent to $W$, $v_1$ and $v_2$ must be multiples of one another. This proves that all $v_{Z}$ occupy the same one-dimensional subspace of $\mathfrak{t}^*$, so that $\mathfrak{t}_{Z'} := v_{Z'}^{\perp}$ is independent of the choice of component $Z' \subseteq Z$.

\begin{claim}\label{clm:vflips}
Suppose that $Z_1$ and $Z_2$ are two different connected components of $Z$ both adjacent to the same connected component $W$ of $M \backslash Z$. Then $v_{Z_1} = kv_{Z_2}$ for some $k < 0$.
\end{claim}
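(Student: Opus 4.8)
The claim is that two components $Z_1, Z_2$ of $Z$ adjacent to the same region $W$ have modular weights that are negative multiples of each other. We already know from the discussion preceding the claim that all the $v_{Z'}$ lie on a common line in $\mathfrak{t}^*$, so $v_{Z_1} = k\, v_{Z_2}$ for some nonzero real $k$; the only thing to prove is that $k < 0$. The plan is to exhibit a single element $X \in \mathfrak{t}$ and a single ${}^bC^\infty$ Hamiltonian $H = \mu_W^X$ on $W$ whose logarithmic coefficient has \emph{opposite signs} near $Z_1$ and near $Z_2$, which forces $\langle v_{Z_1}, X\rangle$ and $\langle v_{Z_2}, X\rangle$ to have opposite signs, giving $k < 0$.

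\medskip

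\textbf{Key steps.} First, pick $X \in \mathfrak{t}$ a lattice vector generating $\mathfrak{t}/\mathfrak{t}_Z$ that pairs positively with $v_{Z_1}$ (this makes sense since all $v_{Z'}$ span one line), and let $H = \mu_W^X : W \to \mathbb{R}$ be a primitive of $\iota_{X^\#}\omega$ on $W$ — such a primitive exists because the action is Hamiltonian. By Proposition \ref{prop:localmomentmap}, in a neighborhood of $Z_1$ we have $H = c_1 \log|t_1| + (\text{smooth})$ with $c_1 = \langle v_{Z_1}, X\rangle > 0$, and in a neighborhood of $Z_2$ we have $H = c_2 \log|t_2| + (\text{smooth})$ with $c_2 = \langle v_{Z_2}, X\rangle$; we must show $c_2 < 0$. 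Second, observe that $H$ is a continuous (indeed smooth) real-valued function on the connected open set $W$, and that near $Z_1$ the term $c_1\log|t_1|$ with $c_1 > 0$ drives $H \to -\infty$ as one approaches $Z_1$. So $H$ is bounded above on $W$ near $Z_1$; more importantly, $H$ is \emph{unbounded below} as one approaches $Z_1$ but — and this is the crux — I claim $H$ must be bounded below away from \emph{at least one} other boundary component, forcing the logarithmic behavior near $Z_2$ to be of the form $c_2\log|t_2|$ with $c_2 < 0$ so that $H \to +\infty$ near $Z_2$.

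\medskip

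\textbf{Making the crux precise.} The cleanest route uses the symplectic-cut picture from Claim \ref{clm:Wconvexity}: after cutting near all adjacent components, $\mu_W(W)$ is a convex set which, by the paragraph preceding this claim, extends to infinity exactly in the directions $-v_{Z'}$ for $Z'$ adjacent to $W$. A convex subset of $\mathfrak{t}^*$ cannot extend to infinity in two directions that are \emph{positive} multiples of one another unless it is all of the line — but it cannot contain an entire line while also being (after the cuts) contained in a region with Delzant-type boundary, since the uncut $\mu_W(W)$ sits inside a proper convex set bounded by the facets coming from the fixed-point strata of the compact cut spaces $\overline{W_{\geq\varepsilon}}$. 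Hence the recession directions $-v_{Z_1}$ and $-v_{Z_2}$, being distinct rays on the same line, must point in \emph{opposite} directions; that is, $-v_{Z_1}$ and $-v_{Z_2}$ are negative multiples of each other, so $v_{Z_1} = k\, v_{Z_2}$ with $k < 0$.

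\medskip

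\textbf{Main obstacle.} The delicate point is ruling out that $\mu_W(W)$ could recede to infinity in the \emph{same} direction along both $Z_1$ and $Z_2$ — a priori one might fear $v_{Z_1}$ and $v_{Z_2}$ are positive multiples. The resolution is that $\mu_W(W)$, while convex and unbounded, is still a proper subset of an affine half-space structure: each $\overline{W_{\geq\varepsilon}}$ is a compact symplectic toric manifold whose moment polytope is genuinely bounded, and letting $\varepsilon \to 0$ these exhaust $\mu_W(W)$, so $\mu_W(W)$ is a nested union of bounded Delzant polytopes and therefore contains no complete affine line. A convex set containing no complete line has the property that its recession cone is \emph{salient} (pointed), so it cannot contain both a ray and its opposite; since $-v_{Z_1}$ and $-v_{Z_2}$ are nonzero recession directions on one line, they lie in this salient cone and hence cannot be antipodal — wait, salience says exactly that it contains no pair $\{w, -w\}$, so in fact $-v_{Z_1}$ and $-v_{Z_2}$ being on the same line and both in the cone forces them to be \emph{positive} multiples, i.e. $k > 0$. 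This is the opposite of what we want, so the honest argument must instead track the \emph{sign of $H$ itself}: as one traverses $W$ from $Z_1$ to $Z_2$, the value $\mu_W^X = H$ goes from $-\infty$ (at $Z_1$, since $c_1 > 0$) and, being a well-defined ${}^bC^\infty$ function on the connected $W$ that is unbounded, its only other logarithmic singularities are along the remaining adjacent components; if $c_2 > 0$ then $H \to -\infty$ near $Z_2$ as well, and one checks via the local model that $H$ would then be bounded above on all of $W$ with no compensating $+\infty$ behavior, contradicting that the symplectic cut $\overline{W_{\geq\varepsilon}}$ is a genuine compact toric manifold whose moment image in the $X$-direction is a nondegenerate bounded interval whose length grows like $\log(1/\varepsilon)$ in \emph{both} directions. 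Pinning down this last compactness/growth dichotomy cleanly is where the real work lies.
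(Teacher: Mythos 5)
Your reduction of the claim to showing that the logarithmic coefficients $c_1 = \langle v_{Z_1}, X\rangle$ and $c_2 = \langle v_{Z_2}, X\rangle$ of a single Hamiltonian $H = \mu_W^X$ have opposite signs is the right starting point, and you are right to abandon the recession-cone argument: as your own salient-cone detour shows, convexity of $\mu_W(W)$ by itself is entirely consistent with $-v_{Z_1}$ and $-v_{Z_2}$ being the \emph{same} ray, so that route cannot close the proof. But the replacement you offer --- the ``compactness/growth dichotomy'' --- is not a valid argument, and your last sentence concedes that the real work is still undone. If $c_1 > 0$ and $c_2 > 0$, then $H$ tends to $-\infty$ near both $Z_1$ and $Z_2$ and is simply bounded above on $W$ (its supremum is attained on the compact set $W_{\geq \varepsilon}$); nothing about the compactness of the cut spaces $\overline{W_{\geq \varepsilon}}$ contradicts this. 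The moment image of $\overline{W_{\geq \varepsilon}}$ in the $X$-direction would then be a bounded interval growing like $\log(1/\varepsilon)$ in \emph{one} direction only, which is perfectly admissible; there is no compactness or growth obstruction to the scenario $k > 0$.

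The missing ingredient is the connectivity of level sets of moment map components (Atiyah--Guillemin--Sternberg), which is what the paper's proof uses. Performing symplectic cuts near all components of $Z$ adjacent to $W$, as in the proof of Claim \ref{clm:Wconvexity}, produces a compact \emph{connected} symplectic toric manifold $\overline{W_{\geq \varepsilon}}$ on which $H$ extends to a component of a genuine moment map, so $H^{-1}(\lambda)$ is connected for every $\lambda$; this passes to $W$ itself. If both $c_1, c_2 > 0$, then for $N$ sufficiently large the level set $H^{-1}(-N)$ would have one connected component inside a small neighborhood of $Z_1$ and another inside a disjoint small neighborhood of $Z_2$, contradicting connectedness unless $Z_1 = Z_2$. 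That single step is what replaces your final paragraph and completes the proof.
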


\begin{proof}
By the discussion above, $v_{Z_1} = kv_{Z_2}$ for some $k \in \mathbb{R}$, and by Claim \ref{claim:vznonzero}, $k \neq 0$. It suffices, therefore, to prove that $k$ cannot be positive. Assume towards a contradiction that $k$ is positive, and pick $X \in \mathfrak{t}$ such that $\langle X, v_{Z_1}\rangle > 0$, and let $H: W \rightarrow \mathbb{R}$ be a Hamiltonian for the flow of $X^{\#}$. By performing symplectic cuts sufficiently close to the components of $Z$ adjacent to $W$ (as in the proof of Claim \ref{clm:Wconvexity}) and using the fact that the level sets of moment maps on compact connected symplectic manifolds are connected, it follows that $H^{-1}(\lambda)$ is connected for any $\lambda \in \mathbb{R}$. In a neighborhood of $Z_1$ and of $Z_2$, the function $H$ approaches negative infinity. Therefore, for sufficiently large values of $N$, the level set $H^{-1}(-N)$ has a connected component inside a neighborhood of $Z_1$ and another inside a neighborhood of $Z_2$. Because $H^{-1}(-N)$ has just one connected component, $Z_1 = Z_2$.
\end{proof}

In particular, each component of $M \backslash Z$ is adjacent to at most two connected components of $Z$.

\begin{definition} The \textbf{weighted adjacency graph} $\mathcal{G} = (G, w)$ of a symplectic $b$-manifold $(M, Z, \omega)$ consists of a graph $G=(V,E)$ and a weight function on the set of edges, $w:E \rightarrow \mathfrak{t}^*$. The graph $G$ has a vertex for each component of $M \setminus Z$ and an edge for each connected component of $Z$ that connects the vertices corresponding to the components of $M \setminus Z$ that it separates. The weight $w(e)$ is the modular weight of the connected component of $Z$ corresponding to $e$.
\end{definition}

When $(M^{2n}, Z, \omega)$ has an effective toric action, this graph must either a loop or a line, as illustrated in Figure \ref{fig:graphsofWs}. If it is a loop, Claim \ref{clm:vflips} implies that it must have an even number of vertices.

\begin{figure}[ht]
\centering
\begin{tikzpicture}[scale = 0.8]

\pgfmathsetmacro{\sizer}{1.7}

\DrawTwoDonut{1.45*\sizer}{-0.085*\sizer} {0.26*\sizer}{0.54*\sizer}{0}{red}{very thick}
\DrawTwoDonut{0.55*\sizer}{0.535*\sizer}  {0.21*\sizer}{0.42*\sizer}{70}{red}{very thick}
\DrawTwoDonut{-0.55*\sizer}{0.535*\sizer} {0.21*\sizer}{0.42*\sizer}{110}{red}{very thick}
\DrawTwoDonut{-1.45*\sizer}{-0.085*\sizer}{0.26*\sizer}{0.54*\sizer}{180}{red}{very thick}
\DrawTwoDonut{-0.55*\sizer}{-0.6*\sizer}  {0.18*\sizer}{0.36*\sizer}{250}{red}{very thick}
\DrawTwoDonut{0.55*\sizer}{-0.6*\sizer}   {0.18*\sizer}{0.36*\sizer}{290}{red}{very thick}
\DrawDonut{0}{0}{1*\sizer}{2*\sizer}{0}{black}{very thick}

\draw (1.3*\sizer, 0.4*\sizer) node[]    {$W_0$};
\draw (0*\sizer, 0.7*\sizer) node[]      {$W_1$};
\draw (-1.3*\sizer, 0.4*\sizer) node[]   {$W_2$};
\draw (-1.15*\sizer, -0.5*\sizer) node[] {$W_3$};
\draw (0*\sizer, -0.65*\sizer) node[]    {$W_4$};
\draw (1.15*\sizer, -0.5*\sizer) node[]  {$W_5$};

\pgfmathsetmacro{\rightshift}{7.5}

\draw (\rightshift + 1.5,  0.4) node[above] (A1) {$W_0$};
\draw (\rightshift + 0  ,  0.8) node[above] (A2) {$W_1$};
\draw (\rightshift - 1.5,  0.4) node[above] (A3) {$W_2$};
\draw (\rightshift - 1.5, -0.4) node[below] (A4) {$W_3$};
\draw (\rightshift + 0  , -0.8) node[below] (A5) {$W_4$};
\draw (\rightshift + 1.5, -0.4) node[below] (A6) {$W_5$};
\draw[very thick] (A1) -- (A2) -- (A3) -- (A4) -- (A5) -- (A6) -- (A1);

\pgfmathsetmacro{\downshift}{-3}

\DrawTwoDonut{-1.4}{\downshift}{0.4}{1.1}{90}{red}{very thick}
\DrawTwoDonut{0}   {\downshift}{0.4}{1.1}{90}{red}{very thick}
\DrawTwoDonut{1.4} {\downshift}{0.4}{1.1}{90}{red}{very thick}

\draw[very thick] (-2, 1.1+\downshift) -- (2, 1.1+\downshift) arc (90:-90:1.1 and  1.1) -- (-2, -1.1+\downshift) arc (270:90:1.1 and 1.1);

\draw (-2.2, \downshift) node[] {$W_0$};
\draw (-0.7, \downshift) node[] {$W_1$};
\draw (0.7, \downshift) node[] {$W_2$};
\draw (2.2, \downshift) node[] {$W_3$};

\draw (\rightshift - 2.25,  \downshift) node[] (B1) {{\small{$W_0$}}};
\draw (\rightshift - 0.75,  \downshift) node[] (B2) {{\small{$W_1$}}};
\draw (\rightshift + 0.75,  \downshift) node[] (B3) {{\small{$W_2$}}};
\draw (\rightshift + 2.25,  \downshift) node[] (B4) {{\small{$W_3$}}};
\draw[very thick] (B1) -- (B2) -- (B3) -- (B4);

\pgfmathsetmacro{\massyzero}{\downshift - 1.1}
\pgfmathsetmacro{\massxzero}{2}
\pgfmathsetmacro{\massyscale}{0.02}
\pgfmathsetmacro{\massxscale}{0.02}

\end{tikzpicture}
\caption{The adjacency graph is either a cycle of even length or a line.}
\label{fig:graphsofWs}
\end{figure}
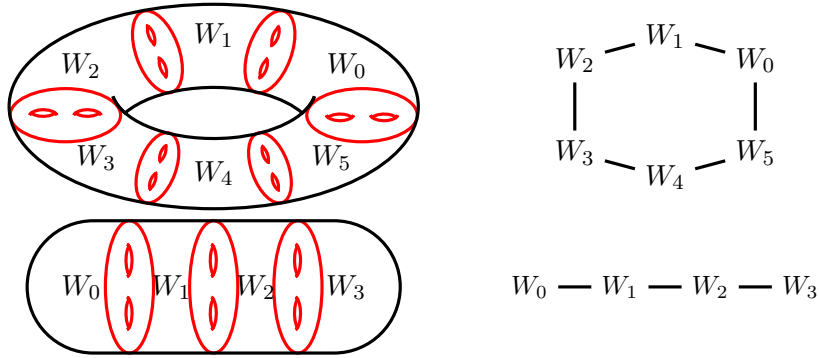

\section{The $b$ moment codomain}\label{sec:codomain}

When $b$-functions are the Hamiltonians of a torus action, we cannot expect to be able to gather them into a moment map $\mu: M \rightarrow \mathfrak{t}^*$ the same way we do in classic symplectic geometry: it is impossible to define $\mu$ along $Z$. In this section, we define a moment map for a toric action on a $b$-manifold. We start with a copy of $\mathfrak{t}^*$ for each component of $M \backslash Z$, add points ``at infinity,'' then glue these copies of $\mathfrak{t}^*$ together in a zig-zag pattern and use labels on the points at infinity to put a smooth structure on the result. Figure \ref{fig:bline} shows an example of the $\mathfrak{t}^* \cong \mathbb{R}$ case.
\begin{figure}[ht]
\centering
\begin{tikzpicture}[scale = 0.8]
\foreach \copies in {0, 1, 2}{ \pgfmathtruncatemacro{\lowerlabel}{(\copies * 2)+1}	 \pgfmathtruncatemacro{\upperlabel}{(\copies * 2) + 2}	
\foreach \paramt in {-3, -2.75, ..., 0.01}	
	{
	\pgfmathsetmacro{\height}{1.5 * exp(\paramt)}	
	\pgfmathsetmacro{\basept}{\copies * (8/3)}	
	\pgfmathsetmacro{\xshift}{0.25*(\height * (\height - 3)) - 0.1}
	\draw[fill = black] (\basept + \xshift, \height - 1.5) circle(.1mm);
	\draw[fill = black] (\basept - \xshift, \height - 1.5) circle(.1mm);
	\draw[red, fill = red] (\basept, -1.6) circle(.5mm) node[below right, black] {\small{$\textrm{w}(e_\lowerlabel)$}};
	
	\draw[fill = black] (\basept + 1.33 + \xshift, 1.5 - \height) circle(.1mm);
	\draw[fill = black] (\basept + 1.33 - \xshift, 1.5 - \height) circle(.1mm);
	\draw[red, fill = red] (\basept + 1.33, 1.6) circle(.5mm) node[above right, black] {\small{$\textrm{w}(e_\upperlabel)$}};
	}}

\foreach \paramt in {-3, -2.75, ..., 0.01}	
	{
	\pgfmathsetmacro{\height}{1.5 * exp(\paramt)}	
	\pgfmathsetmacro{\basept}{0 * (8/3)}	
	
	\draw[fill = black] (\basept  - 0.66,1.5 -  \height) circle(.1mm);

	\pgfmathsetmacro{\basept}{2.5 * (8/3)}	
	\draw[fill = black] (\basept + 0.66, \height -  1.5) circle(.1mm);

	}

\pgfmathsetmacro{\rline}{11}	

\draw (9, 0) edge node[above] {$\hat{x}$} (10.5, 0);
\draw[->] (9, 0) -- (10.5, 0);

\foreach \paramt in {-3, -2.75, ..., 0.01}	
	{
	\pgfmathsetmacro{\height}{1.5 * exp(\paramt)}	
	\pgfmathsetmacro{\xshift}{0.25*(\height * (\height - 3)) - 0.1}
	\draw[fill = black] (\rline, \height - 1.5) circle(.1mm);
	\draw[fill = black] (\rline, 1.5 - \height) circle(.1mm);
	}

\draw (\rline, 1.5) node[right] {$\mathfrak{t}^* \cong \mathbb{R}$};
	
\end{tikzpicture}
\caption{A $b$ moment codomain when $G$ is a line with six edges.}
\label{fig:bline}
\end{figure}
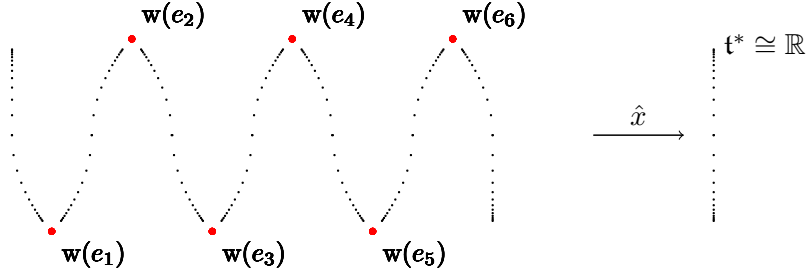

The technical details are as follows: let $\mathbb{T}^n$ be a torus and consider the pair $\mathcal{G} = (G, w)$, of $G$ a finite graph that is either a cycle of even length or a line and a nonvanishing function $w: E \rightarrow \mathfrak{t}^*$ such that whenever the edges $e$ and $e'$ meet at a vertex, $w(e) = kw(e')$ for some $k < 0$. Note that these are exactly the properties that weighted adjacency graphs of toric $b$-symplectic manifolds satisfy.

From this data, we construct the \textbf{$b$ moment codomain} $(\mathcal{R}_\mathcal{G}, \mathcal{Z}_\mathcal{G}, \hat{x})$, where $(\mathcal{R}_\mathcal{G}, \mathcal{Z}_\mathcal{G})$ will be a $b$-manifold and $\hat{x}: \mathcal{R}_\mathcal{G} \backslash \mathcal{Z}_\mathcal{G} \rightarrow \mathfrak{t}^*$ a smooth map. If $G$ is a single vertex, define $(\mathcal{R}_\mathcal{G}, \mathcal{Z}_\mathcal{G}, \hat{x}) = (\mathfrak{t}^*, \emptyset, \textrm{id})$. If $G$ has more than one vertex, let $\mathfrak{t}_w = (w(e))^\perp \subseteq \mathfrak{t}$ for any choice of $e \in E$. As a set, define
\begin{align*}
\mathcal{R}_\mathcal{G} &= \mathfrak{t}^* \times V \sqcup \mathfrak{t}_w^* \times E\\
\mathcal{Z}_\mathcal{G} &= \hspace{1.5cm} \mathfrak{t}_w^* \times E
\end{align*}
and let $\hat{x}((x, v)) = x$.

We define a smooth structure on $\mathcal{R}_\mathcal{G}$ by declaring the maps $\hat{x}$ and the following maps $\{y_{A,e}\}_{e \in E}$ to be smooth: pick an $A \in \mathfrak{t} \backslash \mathfrak{t}_w$ (the smooth structure on $\mathcal{R}_\mathcal{G}$ can be proven to be independent of this choice); for each edge $e=(v,v')$ we define the map $y_{A, e}$ from a subset of $\mathcal{R}_\mathcal{G}$ to $\mathfrak{t}_w^* \times \mathbb{R}$ by

\begin{figure}[ht]
\centering
\begin{tikzpicture}
\draw (5, 3) [->]   node[left] {$\mathfrak{t}^* \times \{v\} \ \sqcup \ \mathfrak{t}^* \times \{v'\} \ \sqcup \ \mathfrak{t}_w^* \times \{e\}$} to (7, 3)   node[right] {$\mathfrak{t}_w^* \times \mathbb{R}$};

\draw [|->] (0.3, 2) node[left] {$(x, v)$} to (7, 2) node[right] {$\left([x], \textrm{exp}\left( \frac{\langle x, A\rangle}{\langle  w(e), A\rangle}\right)\right)$};

\draw [|->] (2.5, 1) node[left] {$(x, v')$} to  (7, 1) node[right] {$\left([x], -\textrm{exp}\left(\frac{\langle x, A\rangle}{\langle  w(e), A\rangle}\right)\right)$};

\draw [|->] (4.6, 0) node[left] {$([x], e)$} to (7, 0) node[right] {$([x], 0),$};
\end{tikzpicture}
\end{figure}
\noindent where $[x]$ is the image of $x$ in $\mathfrak{t}_w^* \cong \mathfrak{t}^* / \mathfrak{t}_w^{\perp}$. This completes the construction of the $b$ moment codomain.\\

Given a smooth $b$-map $\mu$ from a $b$-manifold $(M, Z)$ to a $b$ moment codomain, and a choice of $X \in \mathfrak{t}$, the map $\langle \hat{x} \circ \mu, X \rangle$ defines a $b$-function on $M$. In this way, we can represent many $b$-functions as maps to $b$ moment codomains.

\begin{definition}\label{def:mm}
Consider a Hamiltonian $\mathbb{T}^n$-action on a $b$-symplectic manifold $(M, Z, \omega)$, and let $\mu: M \rightarrow \mathcal{R}_\mathcal{G}$ be a smooth $\mathbb{T}^n$-invariant $b$-map. We say that $\mu$ is a \textbf{moment map} for the action if the map $\mathfrak{t}\ni X \mapsto \mu^X\in C^\infty(M)$, with $\mu^X(p) = \langle \hat{x} \circ \mu(p), X\rangle$, is linear and
$$\iota_{X^{\#}}\omega = d\mu^X.$$
\end{definition}

\begin{example}\label{ex:mms2}
Let $(h, \theta)$ be the standard coordinates on $\SSS^2$. Pick a generator $X$ of $\mathfrak{t} = \textrm{Lie}(\mathbb{S}^1)$ and consider the $\mathbb{S}^1$ action described by $X^{\#} = -\frac{\partial}{\partial \theta}$. For any $c \in \mathbb{R}_{> 0}$, the form $\omega_c = c\frac{dh}{h} \wedge d\theta$ is $b$-symplectic, and for any $k \in \mathbb{R}$ the $b$-function $c\log|h| + k$ generates the $\SSS^1$-action given by the flow of $-\frac{\partial}{\partial \theta}$. Let $G$ consist of a single edge connecting two vertices, and $w(e) = cX^*$. Figure \ref{fig:moment_maps_on_s2} shows the map $\mu: \SSS^2 \rightarrow \mathcal{R}_\mathcal{G}$ corresponding to the Hamiltonian $c\log|h|$, and another $\mu'$ corresponding to $c\log|h| - 2$. In both cases, we have drawn $\mathcal{R}_{\mathcal{G}}$ twice -- the first is vertically so that $\mu$ can be visualized as a projection, the second is bent to look visually similar to Figure \ref{fig:bline}.

\begin{figure}[ht]
\centering
\begin{tikzpicture}[scale = 0.8]

\def\R{1.6}
\def\angEl{10}

\pgfmathsetmacro{\circlebbase}{8}
\pgfmathsetmacro{\momentmapabase}{2.5}
\pgfmathsetmacro{\momentmapbbase}{4}

\pgfmathsetmacro{\momentmapcbase}{2.5 + \circlebbase}
\pgfmathsetmacro{\momentmapdbase}{4 + \circlebbase}

\draw (0, 0) circle (\R);
\DrawLatitudeCircle[\R]{0}{0}

\draw (\circlebbase, 0) circle (\R);
\DrawLatitudeCircle[\R]{0}{5}

\foreach \paramt in {-3, -2.75, ..., 0.01}	
	{
	\pgfmathsetmacro{\height}{1.5 * exp(\paramt)}
	\draw[fill = black] (\momentmapabase, \height - 0 + 0.1) circle(.1mm);
	\draw[fill = black] (\momentmapabase, 0 - \height - 0.1) circle(.1mm);
	
	\pgfmathsetmacro{\xshift}{0.25*(\height * (\height - 3)) - 0.1}
	\draw[fill = black] (\momentmapbbase + \xshift, \height - 1.5) circle(.1mm);
	\draw[fill = black] (\momentmapbbase - \xshift, \height - 1.5) circle(.1mm);
	
	\draw[fill = black] (\momentmapcbase, \height - 0 + 0.1) circle(.1mm);
	\draw[fill = black] (\momentmapcbase, 0 - \height - 0.1) circle(.1mm);
	
	\draw[fill = black] (\momentmapdbase + \xshift, \height - 1.5) circle(.1mm);
	\draw[fill = black] (\momentmapdbase - \xshift, \height - 1.5) circle(.1mm);
	}

\foreach \extratail in {0.25, 0.5}
	{
	\pgfmathsetmacro{\extrat}{\extratail * 1.5}
	\draw[fill = black] (\momentmapabase, 0.1 + 1.5 + \extrat) circle(.1mm);
	\draw[fill = black] (\momentmapabase, -0.1 - 1.5 - \extrat) circle(.1mm);
	
	\draw[fill = black] (\momentmapbbase - .66, 0 + \extrat) circle(.1mm);
	\draw[fill = black] (\momentmapbbase + .66, 0 + \extrat) circle(.1mm);
	
	\draw[fill = black] (\momentmapcbase, 0.1 + 1.5 + \extrat) circle(.1mm);
	\draw[fill = black] (\momentmapcbase, -0.1 - 1.5 - \extrat) circle(.1mm);
	
	\draw[fill = black] (\momentmapdbase - .66, 0 + \extrat) circle(.1mm);
	\draw[fill = black] (\momentmapdbase + .66, 0 + \extrat) circle(.1mm);
	}

\draw[red, fill = red] (\momentmapbbase, -1.6) circle(.3mm) node[below right, text=black] {\small{$c$}};
\draw[red, fill = red] (\momentmapabase, 0) circle(.3mm) node[right, text=black] {\small{$c$}};
\draw[red, fill = red] (\momentmapdbase, -1.6) circle(.3mm) node[below right, text=black] {\small{$c$}};
\draw[red, fill = red] (\momentmapcbase, 0) circle(.3mm) node[right, text=black] {\small{$c$}};
				
\draw[line width = 6pt, join = round, opacity=0.2] (\momentmapabase, 1.6) -- (\momentmapabase, -1.6);
\draw[line width = 6pt, join = round, opacity=0.2] (\momentmapbbase - .66, 0) -- (\momentmapbbase - .63,  1.16 - 1.5) -- (\momentmapbbase - .575,  .91 - 1.5) -- (\momentmapbbase - .5, .71-1.5) -- (\momentmapbbase - .44, .55-1.5) -- (\momentmapbbase, -1.65) -- (\momentmapbbase + .44, .55-1.5) -- (\momentmapbbase + .5, .71-1.5) -- (\momentmapbbase + .575,  .91 - 1.5) -- (\momentmapbbase + .63,  1.16 - 1.5) -- (\momentmapbbase + .66, 0);

\draw[line width = 6pt, join = round, opacity=0.2] (\momentmapcbase, -1.01) -- (\momentmapcbase, 1.01);

\draw[line width = 6pt, join = round, opacity=0.2] (\momentmapdbase - .575,  .91 - 1.5) -- (\momentmapdbase - .5, .71-1.5) -- (\momentmapdbase - .44, .55-1.5) -- (\momentmapdbase, -1.65) -- (\momentmapdbase + .44, .55-1.5) -- (\momentmapdbase + .5, .71-1.5) -- (\momentmapdbase + .575,  .91 - 1.5);

\draw[thick, ->] (1.3, 1.4) -- (2.2, 1.4);
\draw[thick, ->] (1.3, -1.4) -- (2.2, -1.4);

\draw[thick, ->] (1.3 + \circlebbase, 1.4) -- (2.2 + \circlebbase, 0.9);
\draw[thick, ->] (1.3 + \circlebbase, -1.4) -- (2.2 + \circlebbase, -0.9);
		
\draw (2, -3) node {\small{$c\log|h| + 0$ as a map to $\mathcal{R}_{\mathcal{G}}$}};	
	
\draw (2 + \circlebbase, -3) node {\small{$c\log|h| - 2$ as a map to $\mathcal{R}_{\mathcal{G}}$}};	

\end{tikzpicture}
\caption{Two Hamiltonians generating the same $\SSS^1$-action.}
\label{fig:moment_maps_on_s2}
\end{figure}
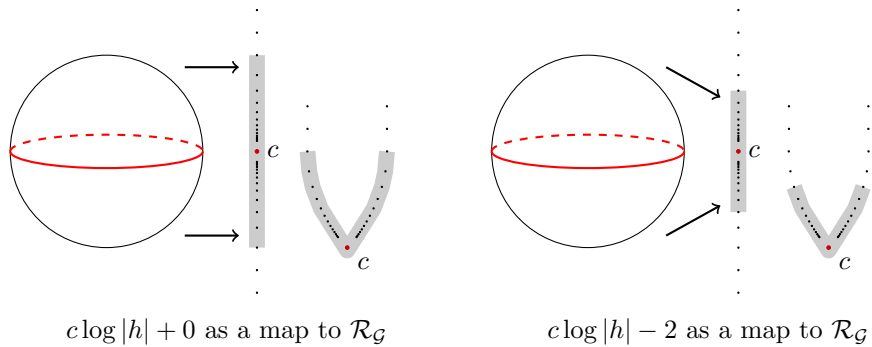

Observe that the moment maps $\mu = c\log|h|$ for different values of $c$ have visually similar images -- the weight data on the graph and therefore the smooth structure of $\mathcal{R}_{\mathcal{G}}$ distinguishes them. For different values of $c$, the $b$-manifolds $(M, Z, \omega_c)$ are \emph{not} symplectomorphic. Also observe that the moment maps in Figure \ref{fig:moment_maps_on_s2} differ by changing the corresponding ${^b}C^{\infty}$ function by a constant; the image of the two moment maps are translates of one another.
\end{example}

\begin{example}
Consider the $b$-symplectic manifold
\[
(M = \SSS^2 \times \SSS^2, Z = \{h_1 = 0\}, \omega = 3\frac{dh_1}{h_1}\wedge d\theta_1 + dh_2 \wedge d\theta_2)
\]
where $(h_1, \theta_1, h_2, \theta_2)$ are the standard coordinates on $\SSS^2 \times \SSS^2$. The $\mathbb{T}^2$-action
\[
(t_1, t_2)\cdot (h_1, \theta_1, h_2, \theta_2) = (h_1, \theta_1- t_1, h_2, \theta_2 - t_2)
\]
is Hamiltonian. Let $X_1$ and $X_2$ be the elements of $\mathfrak{t}$ satisfying $X_1^{\#} = -\frac{\partial}{\partial \theta_1}$ and $X_2^{\#} = -\frac{\partial}{\partial \theta_2}$ respectively. Let $G$ be the connected graph with one edge $e$, and let $w(e) = 3 X_1^*$. Then the map (described here using the basis $\{X_1, X_2\}$)
\[
M \backslash Z \rightarrow \mathfrak{t}^*, \text{\,\,\,\,\,\,} (h_1, \theta_1, h_2, \theta_2) \mapsto (\log|h_1|, h_2),
\]
extends to a moment map $\mu: M \rightarrow \mathcal{R}_{\mathcal{G}}$, the image of which is illustrated in Figure \ref{fig:momentmapS2S2}.

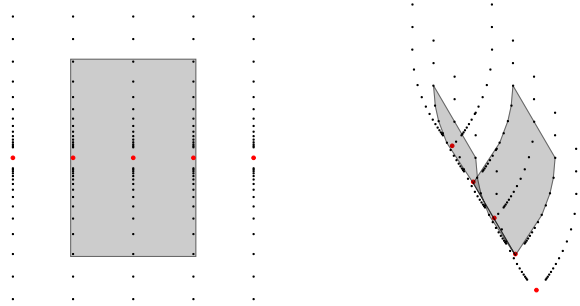
\begin{figure}[ht]
\centering
\begin{tikzpicture}[scale = 0.8]

\pgfmathsetmacro{\startx}{0} \pgfmathsetmacro{\startxt}{6} \pgfmathsetmacro{\verline}{0} \pgfmathsetmacro{\verlinetwo}{-1}
\draw[black, fill = black, opacity = 0.2] (-1.04, 1.64) -- (1.04, 1.64) -- (1.04, -1.64) -- (-1.04, -1.64) -- cycle;
\draw[black, opacity = 0.5] (-1.04, 1.64) -- (1.04, 1.64) -- (1.04, -1.64) -- (-1.04, -1.64) -- cycle;

\foreach \horizshift in {-2, -1, 0, 1, 2}{
	\pgfmathsetmacro{\xshifttwo}{0.35*\horizshift}
	\pgfmathsetmacro{\yshifttwo}{-0.6*\horizshift}
\foreach \paramt in {-3, -2.75, ..., 0.01}	
	{
	\pgfmathsetmacro{\height}{1.5 * exp(\paramt)}
	\draw[fill = black] (\startx + \horizshift, \verline + \height + 0.1) circle(.1mm);
	\draw[fill = black] (\startx + \horizshift, \verline - \height - 0.1) circle(.1mm);
	
	\pgfmathsetmacro{\xshift}{0.25*(\height * (\height - 3)) - 0.1}
	
	\draw[fill = black] (\startxt + \xshifttwo + \xshift, \verlinetwo + \yshifttwo + \height + 0.1) circle(.1mm);
	\draw[fill = black] (\startxt + \xshifttwo - \xshift, \verlinetwo + \yshifttwo + \height + 0.1) circle(.1mm);
	}
	\foreach \extratail in {0.25, 0.5}
		{
		\pgfmathsetmacro{\extrat}{\extratail * 1.5}
		\draw[fill = black] (\startx + \horizshift, \verline + 0.1 + 1.5 + \extrat) circle(.1mm);
		\draw[fill = black] (\startx + \horizshift, \verline -0.1 - 1.5 - \extrat) circle(.1mm);
		
		\draw[fill = black] (\startxt + \xshifttwo - .66 , \verlinetwo + \yshifttwo + 1.6 + \extrat) circle(.1mm);
		\draw[fill = black] (\startxt + \xshifttwo + .66, \verlinetwo + \yshifttwo + 1.6 + \extrat) circle(.1mm);
		
		}
	
	\draw[red, fill = red] (\startx + \horizshift, \verline) circle(.3mm);
	\draw[red, fill = red] (\startxt + \xshifttwo, \verlinetwo + \yshifttwo) circle(.3mm);
	
	}
	
	\pgfmathsetmacro{\xshiftera}{0.35}
	\pgfmathsetmacro{\yshiftera}{-0.6 + 0.1}

	\draw[black, join = round, opacity=0.5]				 (\startxt + \xshiftera, \verlinetwo + \yshiftera -0.12) -- (\startxt + \xshiftera + .44, \verlinetwo + \yshiftera + .55) -- (\startxt + \xshiftera + .5, \verlinetwo + \yshiftera + .71)  -- (\startxt + \xshiftera + .575,  \verlinetwo + \yshiftera + .91) -- (\startxt + \xshiftera + .63,  \verlinetwo + \yshiftera + 1.16) -- (\startxt + \xshiftera + .66,  \verlinetwo + \yshiftera + 1.5) -- (\startxt - \xshiftera + .66,  \verlinetwo - \yshiftera + 0.2 + 1.5) -- (\startxt - \xshiftera + .63,  \verlinetwo - \yshiftera + 0.2 + 1.16) -- (\startxt - \xshiftera + .575,  \verlinetwo - \yshiftera + 0.2 + .91) -- (\startxt - \xshiftera + .5, \verlinetwo - \yshiftera + 0.2 + .71) -- (\startxt - \xshiftera + .44, \verlinetwo - \yshiftera + 0.2 + .55) -- (\startxt - \xshiftera, \verlinetwo - \yshiftera + 0.2 -0.12) -- cycle;
	
	\draw[black, join = round, opacity=0.2, fill = black]				 (\startxt + \xshiftera, \verlinetwo + \yshiftera -0.12) -- (\startxt + \xshiftera + .44, \verlinetwo + \yshiftera + .55) -- (\startxt + \xshiftera + .5, \verlinetwo + \yshiftera + .71)  -- (\startxt + \xshiftera + .575,  \verlinetwo + \yshiftera + .91) -- (\startxt + \xshiftera + .63,  \verlinetwo + \yshiftera + 1.16) -- (\startxt + \xshiftera + .66,  \verlinetwo + \yshiftera + 1.5) -- (\startxt - \xshiftera + .66,  \verlinetwo - \yshiftera + 0.2 + 1.5) -- (\startxt - \xshiftera + .63,  \verlinetwo - \yshiftera + 0.2 + 1.16) -- (\startxt - \xshiftera + .575,  \verlinetwo - \yshiftera + 0.2 + .91) -- (\startxt - \xshiftera + .5, \verlinetwo - \yshiftera + 0.2 + .71) -- (\startxt - \xshiftera + .44, \verlinetwo - \yshiftera + 0.2 + .55) -- (\startxt - \xshiftera, \verlinetwo - \yshiftera + 0.2 -0.12) -- cycle;

\draw[black, join = round, opacity=0.5]				 (\startxt + \xshiftera, \verlinetwo + \yshiftera -0.12) -- (\startxt + \xshiftera - .44, \verlinetwo + \yshiftera + .55) -- (\startxt + \xshiftera - .5, \verlinetwo + \yshiftera + .71)  -- (\startxt + \xshiftera - .575,  \verlinetwo + \yshiftera + .91) -- (\startxt + \xshiftera - .63,  \verlinetwo + \yshiftera + 1.16) -- (\startxt + \xshiftera - .66,  \verlinetwo + \yshiftera + 1.5) -- (\startxt - \xshiftera - .66,  \verlinetwo - \yshiftera + 0.2 + 1.5) -- (\startxt - \xshiftera - .63,  \verlinetwo - \yshiftera + 0.2 + 1.16) -- (\startxt - \xshiftera - .575,  \verlinetwo - \yshiftera + 0.2 + .91) -- (\startxt - \xshiftera - .5, \verlinetwo - \yshiftera + 0.2 + .71) -- (\startxt - \xshiftera - .44, \verlinetwo - \yshiftera + 0.2 + .55) -- (\startxt - \xshiftera, \verlinetwo - \yshiftera + 0.2 -0.12) -- cycle;

\draw[black, join = round, opacity=0.2, fill = black]				 (\startxt + \xshiftera, \verlinetwo + \yshiftera -0.12) -- (\startxt + \xshiftera - .44, \verlinetwo + \yshiftera + .55) -- (\startxt + \xshiftera - .5, \verlinetwo + \yshiftera + .71)  -- (\startxt + \xshiftera - .575,  \verlinetwo + \yshiftera + .91) -- (\startxt + \xshiftera - .63,  \verlinetwo + \yshiftera + 1.16) -- (\startxt + \xshiftera - .66,  \verlinetwo + \yshiftera + 1.5) -- (\startxt - \xshiftera - .66,  \verlinetwo - \yshiftera + 0.2 + 1.5) -- (\startxt - \xshiftera - .63,  \verlinetwo - \yshiftera + 0.2 + 1.16) -- (\startxt - \xshiftera - .575,  \verlinetwo - \yshiftera + 0.2 + .91) -- (\startxt - \xshiftera - .5, \verlinetwo - \yshiftera + 0.2 + .71) -- (\startxt - \xshiftera - .44, \verlinetwo - \yshiftera + 0.2 + .55) -- (\startxt - \xshiftera, \verlinetwo - \yshiftera + 0.2 -0.12) -- cycle;

\end{tikzpicture}
\caption{The moment map image $\mu(\SSS^2 \times \SSS^2)$, drawn twice. The figure on the left shows the similarity with that of the standard action of $\mathbb{T}^2$ on $\SSS^2 \times \SSS^2$ from classic symplectic geometry and the one on the right is bent to be visually similar to Figure \ref{fig:bline}.}
\label{fig:momentmapS2S2}
\end{figure}
\end{example}

\begin{example}\label{ex:mmt2}
Consider the $b$-symplectic manifold
\[
(\TT^2 = \{(\theta_1, \theta_2) \in (\mathbb{R} / 2\pi)^2\}, Z = \{\theta_1 \in \{0, \pi\}\}, \omega=\frac{d \theta_1}{\sin \theta_1}\wedge d\theta_2)
\]
with $\SSS^1$-action given by the flow of $\frac{\partial}{\partial \theta_2}$. Let $X \in \mathfrak{t}$ be the element satisfying $X^{\#} = \frac{\partial}{\partial \theta_2}$. Let $G$ be the cycle with two edges, and let $w$ map one edge to $X^*$ and the other to $-X^*$. A (smooth) moment map for the $\SSS^1$-action is given by extending
\[
\mathbb{T}^2 \backslash Z \rightarrow \mathfrak{t}^*, \text{\,\,\,\,\,\,\,}
(\theta_1, \theta_2) \mapsto
\left\{ \begin{array}{r l}

\left(1, \textrm{log}\left| \frac{1 + \cos\theta_1}{\sin\theta_1}\right|\right) & \text{if } 0 < \theta_1 < \pi \\

\left(0, \textrm{log}\left| \frac{1 + \cos\theta_1}{\sin\theta_1}\right|\right) & \text{if } \pi < \theta_1 <2\pi
\end{array}\right.
\]
to $\mu: \TT^2 \rightarrow \mathcal{R}_{\mathcal{G}}$, as shown in Figure \ref{fig:momentmapT2}.

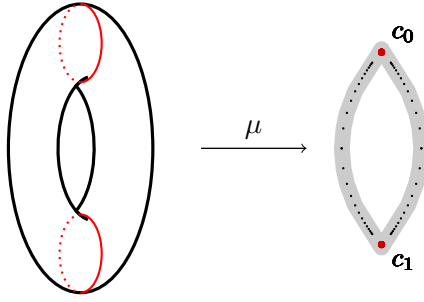
\begin{figure}[ht]
\begin{tikzpicture}[scale = 0.8]

\pgfmathsetmacro{\sizer}{2}
\pgfmathsetmacro{\donutcenterx}{0}
\pgfmathsetmacro{\donutcentery}{0}
\pgfmathsetmacro{\basept}{5}

\DrawDonut{\donutcenterx}{\donutcentery}{.6*\sizer}{1.2*\sizer}{-90}{black}{very thick}
\draw [red, thick] (\donutcenterx, \donutcentery - .55*\sizer) arc (90:-90:0.35 and .65);
\draw [red, thick, dotted] (\donutcenterx, \donutcentery - .55*\sizer) arc (90:270:0.35 and .65);

\draw [red, thick] (\donutcenterx, \donutcentery + .55*\sizer) arc (-90:90:0.35 and .65);
\draw [red, thick, dotted] (\donutcenterx, \donutcentery + .55*\sizer) arc (270:90:0.35 and .65);

\foreach \paramt in {-3, -2.75, ..., 0.01}	
	{
	\pgfmathsetmacro{\height}{1.5 * exp(\paramt)}	
	
	\pgfmathsetmacro{\xshift}{0.25*(\height * (\height - 3)) - 0.1}
	\draw[fill = black] (\basept + \xshift, \height - 1.5) circle(.1mm);
	\draw[fill = black] (\basept - \xshift, \height - 1.5) circle(.1mm);
	\draw[red, fill = red] (\basept, -1.6) circle(.5mm) node[below right, black] {\small{$c_1$}};
	
	\draw[fill = black] (\basept + \xshift, 1.5 - \height) circle(.1mm);
	\draw[fill = black] (\basept - \xshift, 1.5 - \height) circle(.1mm);
	\draw[red, fill = red] (\basept, 1.6) circle(.5mm) node[above right, black] {\small{$c_0$}};
	}

\draw (2, 0) edge node[above] {$\mu$} (3.75, 0);
\draw[->] (2, 0) -- (3.75, 0);

\draw[line width = 6pt, join = round, opacity=0.2] (\basept - .66, 0) -- (\basept - .63,  1.16 - 1.5) -- (\basept - .575,  .91 - 1.5) -- (\basept - .5, .71-1.5) -- (\basept - .44, .55-1.5) -- (\basept, -1.65) -- (\basept + .44, .55-1.5) -- (\basept + .5, .71-1.5) -- (\basept + .575,  .91 - 1.5) -- (\basept + .63,  1.16 - 1.5) -- (\basept + .66, 0) -- (\basept + .63,  1.5 - 1.16) -- (\basept + .575,  1.5 - .91) -- (\basept + .5, 1.5 - .71) -- (\basept + .44, 1.5 - .55) -- (\basept, 1.65) -- (\basept - .44, 1.5 - .55) -- (\basept - .5, 1.5-.71) -- (\basept - .575,  1.5-.91) -- (\basept - .63,  1.5-1.16) -- cycle;

\end{tikzpicture}
\caption{The moment map $\mu: \TT \rightarrow \mathcal{R}_{\mathcal{G}}$.}
\label{fig:momentmapT2}
\end{figure}
\end{example}

\section{The moment map of a toric action on a $b$-symplectic manifold}\label{sec:mmallofM}

The main result of this section is Theorem \ref{thm:mmexistence}, which states that every $b$-symplectic manifold with a toric action has a globally-defined moment map. We begin by studying a toric action in a neighborhood of $Z$ .

First, we note that Proposition \ref{prop:localmomentmap} can now be rewritten in terms of the new notion of moment map. The weighted adjacency graph of the neighborhood $\mathcal{U}$ of $Z$ is $(G, w)$, where $G$ consists of one edge $e$ connecting two vertices, and $w(e)$ is equal to the modular weight $v_Z$ of $Z$. The classical moment map in equation (\ref{eqn:localmm}) can be rewritten as a $b$ moment map, now defined in the whole open set $\mathcal{U}$:
\begin{equation}\label{eqn:newlocalmm}
\mu: \mathcal{L} \times \SSS^1 \times (-\varepsilon, \varepsilon) \rightarrow \mathcal{R}_\mathcal{G} \cong \mathfrak{t}_{Z}^* \times \mathbb{R},\text{\,\,\,\,\,\,\,\,\,}
(\ell, \rho, t) \mapsto (\mu_{\mathcal{L}}(\ell), t),
\end{equation}
where the isomorphism $\mathcal{R}_{\mathcal{G}} \cong \mathfrak{t}_{Z}^* \times \mathbb{R}$ is the map $y_{X, e}$ described in the definition of the $b$ moment codomain, for a choice of $X\in \mathfrak{t}$ that represents a generator of $\mathfrak{t} / \mathfrak{t}_Z$ and pairs positively with $v_{Z}$.

Secondly, we need a proposition that describes a local model for the $b$-symplectic manifold in a neighborhood of $Z$.

\begin{proposition}\label{prop:local_model}(Local Model)
Let $G$ be a graph with two vertices and a single edge $e$, let $w$ be any weight function, and let $X$ be any lattice generator of $\mathfrak{t}/\mathfrak{t}_w$ that pairs positively with $w(e)$. Recall that the function $y_{X, e}$ gives an isomorphism between $\mathcal{R}_{\mathcal{G}}$ and $\mathfrak{t}_w^* \times \mathbb{R}$. Then for any Delzant polytope $\Delta \subseteq \mathfrak{t}_w^*$ with corresponding symplectic toric manifold $(X_{\Delta}, \omega_{\Delta}, \mu_{\Delta})$, define the \textbf{local model} $b$-symplectic manifold as
\[
M_{\textrm{loc}} = X_{\Delta} \times \SSS^1 \times \mathbb{R} \hspace{1cm}
\omega_{\textrm{loc}} = \omega_{\Delta} + c\frac{dt}{t} \wedge d\theta
\]
where $\theta$ and $t$ are the coordinates on $\SSS^1$ and $\mathbb{R}$ respectively. The $\SSS^1 \times \mathbb{T}_Z$ action on $M_{\textrm{loc}}$ given by $(\rho, g) \cdot (x, \theta, t) = (g \cdot x, \theta + \rho, t)$ has moment map $\mu_{\textrm{loc}}(x, \theta, t) = (\mu_{\Delta}(x), t) \in \mathfrak{t}_w^* \times \mathbb{R} \cong \mathcal{R}_{\mathcal{G}}$.

For any toric action on a $b$-manifold $(M, Z, \omega)$ with moment map $\mu: M \rightarrow \mathcal{R}_{\mathcal{G}}$ such that $\mu(U) = \Delta \times (-\epsilon \leq y_0 \leq \epsilon)$ in a neighborhood $U$ of $Z$, there is an equivariant $b$-symplectomorphism $\varphi: M_{\textrm{loc}} \rightarrow M$ in a neighborhood of $X_{\Delta} \times \SSS^1 \times \{0\}$ satisfying $\mu\circ \varphi = \mu_{\textrm{loc}}$.
\end{proposition}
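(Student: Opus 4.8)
\emph{Part 1} is a direct verification. The form $\omega_{\textrm{loc}}$ is closed, and $\omega_{\textrm{loc}}^n = n\,c\,\omega_\Delta^{\,n-1}\wedge\frac{dt}{t}\wedge d\theta$ is nowhere zero as a section of $\Lambda^{2n}({}^bT^*M_{\textrm{loc}})$, so $\omega_{\textrm{loc}}$ is $b$-symplectic with exceptional hypersurface $X_\Delta\times\SSS^1\times\{0\}$. The $\SSS^1\times\mathbb{T}_Z$-action preserves $\omega_{\textrm{loc}}$ and satisfies $\omega_{\textrm{loc}}(X^\#,Y^\#)=0$ because the two factors act on disjoint groups of coordinates. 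Finally $\iota_{Y^\#}\omega_{\textrm{loc}}=\iota_{Y^\#}\omega_\Delta=d\mu_\Delta^Y$ for $Y\in\mathfrak{t}_Z$, while $\iota_{X^\#}\omega_{\textrm{loc}}$ is a multiple of $\frac{dt}{t}$ whose primitive, translated through the identification $y_{X,e}\colon\mathcal{R}_\mathcal{G}\to\mathfrak{t}_w^*\times\mathbb{R}$, is exactly $\mu_{\textrm{loc}}^X$; here and below $c:=\langle X,w(e)\rangle$, which is the modular period by Proposition \ref{prop:no_twisting_occurs}, and matching the sign conventions in the definition of $y_{X,e}$ is routine.

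For \emph{Part 2}, start from Proposition \ref{prop:localmomentmap}, rewritten as in (\ref{eqn:newlocalmm}): a neighborhood of $Z$ in $M$ is equivariantly diffeomorphic to $\mathcal{L}\times\SSS^1\times(-\varepsilon,\varepsilon)$, on which $\mathbb{T}^n\cong\mathbb{T}_Z\times\SSS^1$ acts by the product of the $\mathbb{T}_Z$-action on the symplectic leaf $\mathcal{L}$ and rotation of $\SSS^1$, with $b$ moment map $(\ell,\rho,t)\mapsto(\mu_\mathcal{L}(\ell),t)$. Since $M$ is compact with compact symplectic leaves, $(\mathcal{L},\omega_\mathcal{L})$ is a compact symplectic toric manifold of dimension $2n-2$ whose moment polytope is the $\Delta$ of the hypothesis; by the classical Delzant theorem there is an equivariant symplectomorphism $\psi\colon(X_\Delta,\omega_\Delta,\mu_\Delta)\to(\mathcal{L},\omega_\mathcal{L},\mu_\mathcal{L})$. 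Composing $\psi\times\textrm{id}$ with the diffeomorphism of Proposition \ref{prop:localmomentmap} yields an equivariant $b$-map $\varphi_0$ from a neighborhood of $X_\Delta\times\SSS^1\times\{0\}$ in $M_{\textrm{loc}}$ onto a neighborhood of $Z$ in $M$, carrying $X_\Delta\times\SSS^1\times\{0\}$ to $Z$ and satisfying $\mu\circ\varphi_0=\mu_{\textrm{loc}}$ by construction. Moreover $\varphi_0$ restricts to a symplectomorphism on each leaf, so $\varphi_0^*\omega$ and $\omega_{\textrm{loc}}$ have the same leafwise symplectic form $\omega_\Delta$ and the same modular period $c$ along $X_\Delta\times\SSS^1\times\{0\}$.

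It remains to correct $\varphi_0$ to a $b$-symplectomorphism without changing the moment map, via an equivariant $b$-Moser argument as in Theorems \ref{equivglobalmoser} and \ref{thm:equivDTgeneralG}. Put $\sigma=\varphi_0^*\omega-\omega_{\textrm{loc}}$. Since $\varphi_0^*\omega$ and $\omega_{\textrm{loc}}$ are $\mathbb{T}^n$-invariant $b$-symplectic forms with the same moment map, $\iota_{V^\#}\sigma=d(\mu\circ\varphi_0)^V-d\mu_{\textrm{loc}}^V=0$ for all $V\in\mathfrak{t}$, and $\sigma$ is closed and invariant; because the leafwise forms and the modular periods agree, the linear path $\omega_s=(1-s)\omega_{\textrm{loc}}+s\,\varphi_0^*\omega$ stays $b$-symplectic near $X_\Delta\times\SSS^1\times\{0\}$. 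As the $\SSS^1$-factor acts freely and $\iota_{X^\#}\sigma=\mathcal{L}_{X^\#}\sigma=0$, $\sigma$ descends to a closed $b$-form $\bar\sigma$ on the germ of $(X_\Delta\times\mathbb{R},\,X_\Delta\times\{0\})$ along $X_\Delta\times\{0\}$; by Mazzeo--Melrose its class lies in $H^2(X_\Delta)$ (the residue group $H^1(X_\Delta)$ vanishes as $X_\Delta$ is simply connected) and is computed by the leafwise class $[\omega_\Delta]-[\omega_\Delta]=0$, so $\bar\sigma=d\bar\lambda$. Averaging $\bar\lambda$ over $\mathbb{T}_Z$ we may take it invariant; then each $\iota_{Y^\#}\bar\lambda$ is locally constant, and evaluating it at a $\mathbb{T}_Z$-fixed point of $X_\Delta$ shows it vanishes. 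Hence $\lambda:=\pi^*\bar\lambda$ is a $\mathbb{T}^n$-invariant $b$-$1$-form with $d\lambda=\sigma$ and $\iota_{V^\#}\lambda=0$ for every $V\in\mathfrak{t}$. Integrating the $b$-vector field $v_s$ defined by $\iota_{v_s}\omega_s=-\lambda$ produces equivariant $b$-diffeomorphisms $\gamma_s$ fixing $X_\Delta\times\SSS^1\times\{0\}$ with $\gamma_1^*(\varphi_0^*\omega)=\omega_{\textrm{loc}}$; and $\frac{d}{ds}(\mu_{\textrm{loc}}^V\circ\gamma_s)=\gamma_s^*(\iota_{V^\#}\lambda)=0$, so $\gamma_s$ preserves $\mu_{\textrm{loc}}$. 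Then $\varphi:=\varphi_0\circ\gamma_1$ is the desired equivariant $b$-symplectomorphism, possibly after shrinking the neighborhood.

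The main obstacle is the last paragraph: one must produce a primitive of $\sigma$ that is simultaneously $\mathbb{T}^n$-invariant and annihilated by every generating vector field, so that the Moser isotopy fixes the $\mathcal{R}_\mathcal{G}$-valued moment map on the nose rather than merely its underlying $b$-functions up to additive constants. The simple-connectivity of $X_\Delta$ together with the existence of $\mathbb{T}_Z$-fixed points on $X_\Delta$ are precisely what rule out these constants; everything else is bookkeeping with the normal forms already established.
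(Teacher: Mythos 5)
Your proposal is correct, and its skeleton coincides with the paper's: verify the local model directly, build the candidate map as $\Phi^u_t \circ \Phi^{X^{\#}}_{\theta} \circ \varphi_{\Delta}$ using the classical Delzant theorem on a leaf together with the product structure of Proposition \ref{prop:localmomentmap}, check that it intertwines the moment maps and that the two forms agree along $Z$, and finish with an equivariant Moser argument. The one place where you genuinely diverge is the production of the Moser primitive $\lambda$ of $\sigma=\varphi_0^*\omega-\omega_{\textrm{loc}}$ with the two crucial properties ($\mathbb{T}^n$-invariance and $\iota_{V^{\#}}\lambda=0$ for all $V$, so that the isotopy preserves $\mu_{\textrm{loc}}$ on the nose). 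The paper observes that $\sigma$ is invariant with all orbit directions in its kernel and asserts that it is therefore the pullback under $\mu_{\textrm{loc}}$ of a form $\nu$ on $\mathcal{R}_{\mathcal{G}}$, taking $\lambda$ to be the pullback of a primitive of $\nu$; this is shorter but leans on the (standard yet nontrivial) fact that invariant basic forms on a toric manifold descend smoothly to the moment image, including at singular orbits. You instead quotient only along the freely acting $\SSS^1$, and then use the Mazzeo--Melrose decomposition of the $b$-cohomology of the germ of $X_\Delta\times\mathbb{R}$, simple connectivity of $X_\Delta$ to kill the residue class, averaging over $\mathbb{T}_Z$, and evaluation at a torus fixed point to kill the constants $\iota_{Y^{\#}}\bar\lambda$. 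This avoids the descent-to-the-polytope lemma entirely at the cost of a cohomological computation; both routes deliver the same $\lambda$. One small point to tighten: the nondegeneracy of the linear path $\omega_s$ needs $\sigma$ to vanish identically on $Z$ as a section of $\Lambda^2({}^bT^*M)|_Z$, which requires not only leafwise agreement and equality of $\sigma(t\frac{\partial}{\partial t},\frac{\partial}{\partial\theta})$ but also $\iota_{\mathbb{L}}\sigma|_{TX_\Delta}=0$; this does hold because $\varphi_0$ carries leaves to leaves, so $\varphi_0^*(\iota_{\mathbb{L}}\omega)$ annihilates the leaf directions just as $\iota_{t\partial_t}\omega_{\textrm{loc}}$ does, but it is worth saying explicitly (the paper's own basis computation is addressing exactly this term).
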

\begin{proof}
Fix a symplectic leaf $\mathcal{L} \subseteq Z$. Observe that $\textrm{im}(\restr{\mu}{\mathcal{L}}) = \{0\} \times \Delta$, and define $\mu_{\mathcal{L}}: \mathcal{L} \rightarrow \mathfrak{t}_Z^*$ to be the projection of $\restr{\mu}{\mathcal{L}}$ to its second coordinate. By the classic Delzant theorem there is an equivariant symplectomorphism $\varphi_{\Delta}: (X_{\Delta}, \omega_{\Delta}) \rightarrow (\mathcal{L}, \omega_{\mathcal{L}})$ such that $\mu_{\Delta} = \mu_{\mathcal{L}} \circ \varphi_{\Delta}$. As in the proof of Proposition \ref{prop:localmomentmap}, let $f$ be a primitive of $\iota_{X^{\#}}\omega$ and $y$ a defining function for $Z$ with the property that $f = c\textrm{log}|y|$ near $Z$. Let $u$ be a $\mathbb{T}^n$-equivariant vector field in a neighborhood of $Z$, such that $dy(u) = 1$ and $\iota_{Y^{\#}}\omega(u) = 0$ for all $Y \in \mathfrak{t}_Z$. Then the map
$$\varphi: M_{\textrm{loc}} = X_{\Delta} \times \SSS^1 \times \mathbb{R} \rightarrow M,\text{\,\,\,\,\,\,}
(x, \theta, t) \mapsto \Phi^u_t \circ \Phi^{X^{\#}}_{\theta} \circ \varphi_{\Delta}(x)
$$
is defined in a neighborhood of $X_{\Delta} \times \SSS^1 \times \{0\}$.

It follows by the equivariance of $u, X^{\#}$, and $\varphi_{\Delta}$ that $\varphi$ itself is equivariant. Next, observe that
\[
\mu \circ \varphi(x, \theta, t) = \mu \circ \Phi_{\theta}^{X^{\#}} \circ \Phi_t^u \circ \varphi_{\Delta}(x) = \mu \circ \Phi_t^u \circ \varphi_{\Delta}(x)
\]
since $\mu$ is $\mathbb{T}^n$-invariant, and that the $\mathfrak{t}_Z^*$-component of $\mu \circ \Phi_t^u \circ \varphi_{\Delta}(x)$ will be $\varphi_{\Delta}(x)$, since $\iota_{Y^{\#}}\omega(u) = 0$ for all $Y \in \mathfrak{t}_Z$. The $\mathbb{R}$-component of $\mu \circ \Phi_t^u \circ \varphi_{\Delta}(x)$ will be $t$, since the vector field $u$ satisfies $dy(u) = 1$. Therefore, $\mu \circ \varphi = \mu_{\textrm{loc}}$.

This shows that on $(M_{\textrm{loc}}, \omega_{\textrm{loc}})$ and $(M_{\textrm{loc}}, \varphi^*(\omega))$, the same moment map $\mu_{\textrm{loc}}$ corresponds to the same action. Our next goal is to show that $\restr{\varphi^*\omega}{Z} = \restr{\omega_{\textrm{loc}}}{Z}$. For $z \in Z$, let $A \subseteq {^b}T_zM$ be the symplectic orthogonal to $(X^{\#})_z$. Restriction of the canonical map ${^b}T_zM \rightarrow T_zM$ to $A$ leaves its image unchanged (since the kernel of the canonical map, $\mathbb{L}$, is not in $A$). By picking a basis for $T_zL \subseteq T_zZ$ and pulling it back to $A$, and then adding $(X^{\#})_z$ and $(t\frac{\partial}{\partial t})_z$, we obtain a basis of ${^b}T_zZ$. By calculating the value of $\omega_z$ with respect to this basis, and using the facts that $\varphi_{\Delta}$ is a symplectomorphism and that
\[
\varphi^*\omega(t\frac{\partial}{\partial t}, \frac{\partial}{\partial \theta}) = \omega(yu, X^{\#}) = d(c\log|y|)(yu) = c,
\]
we conclude that $\restr{\varphi^*\omega}{Z} = \restr{\omega_{\textrm{loc}}}{Z}$. Finally, we apply Moser's path method to construct a symplectomorphism between $\varphi^*\omega$ and $\omega_{\textrm{loc}}$.

Note that $\varphi^*\omega - \omega_{\textrm{loc}}$ is $\TT^n$-invariant and has the property that the tangent space to each orbit is in the kernel of $\varphi^*\omega - \omega_{\textrm{loc}}$. Therefore, we can write $\varphi^*\omega - \omega_{\textrm{loc}}$ as the pullback under $\mu_{\textrm{loc}}$ of a smooth form $\nu$ on $\mathcal{R}_{\mathcal{G}}$. Let $\alpha$ be the pullback (under $\mu_{\textrm{loc}}$) of a primitive of $\nu$. Then $\alpha$ is a primitive of $\varphi^*\omega - \omega_{\textrm{loc}}$ with the property that the vector fields defined using Moser's path method will be tangent to the orbits of the torus action, and also with the property that $\alpha$ is torus invariant. Therefore, the equivariant symplectomorphism it defines leaves the moment map unchanged, completing the proof.
\end{proof}

\begin{theorem}\label{thm:mmexistence}
Let $(M, Z, \omega, \mathbb{T}^n)$ be a $b$-symplectic manifold with an effective Hamiltonian toric action, and let $\mathcal{G} = (G, w)$ be its weighted adjacency graph. Then there is a moment map $\mu: M \rightarrow \mathcal{R}_{\mathcal{G}}$.
\end{theorem}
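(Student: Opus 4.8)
The plan is to build the moment map directly out of ${}^bC^{\infty}$ primitives of the one-forms $\iota_{X^{\#}}\omega$, relying on the fact that essentially all of the combinatorial and geometric content of Sections \ref{sect:MminusZ}--\ref{sec:codomain} has already been invested to guarantee that $\mathcal{R}_{\mathcal{G}}$ is a well-defined $b$-manifold (the weights all lie on one line, so $\mathfrak{t}_w$ makes sense; they flip sign at shared vertices; and $G$ is a line or an even cycle). So the remaining task is really just to check that the naive moment map on $M\setminus Z$ extends smoothly across $Z$ into this codomain.

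First I would fix a lattice basis $X_1,\dots,X_n$ of $\mathfrak{t}$, use the Hamiltonian hypothesis to pick primitives $H_{X_j}\in{}^bC^{\infty}(M)$ of $\iota_{X_j^{\#}}\omega$, and extend $\mathbb{R}$-linearly to a family $\mathfrak{t}\ni X\mapsto H_X\in{}^bC^{\infty}(M)$. Linearity of $X\mapsto\iota_{X^{\#}}\omega$ gives $dH_X=\iota_{X^{\#}}\omega$ for all $X$, and the identity $\mathcal{L}_{Y^{\#}}H_X=\iota_{Y^{\#}}dH_X=\omega(X^{\#},Y^{\#})=0$ (which holds for ${}^bC^{\infty}$ functions by the same computation as in the smooth case) shows each $H_X$ is $\mathbb{T}^n$-invariant. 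On $M\setminus Z$ these functions assemble into a smooth invariant map $\mu_0\colon M\setminus Z\to\mathfrak{t}^*$, which I regard as a map into $\mathfrak{t}^*\times V=\mathcal{R}_{\mathcal{G}}\setminus\mathcal{Z}_{\mathcal{G}}$ by sending the component $W_v$ of $M\setminus Z$ into $\mathfrak{t}^*\times\{v\}$.

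The heart of the argument is then to show $\mu_0$ extends over $Z$ to a smooth $b$-map $\mu\colon(M,Z)\to(\mathcal{R}_{\mathcal{G}},\mathcal{Z}_{\mathcal{G}})$. Since $\hat x\circ\mu=\mu_0$ away from $Z$, this is a local question near each component $Z_i$ of $Z$, and there I would work in the chart $y_{A,e_i}$ for a fixed lattice element $A\in\mathfrak{t}\setminus\mathfrak{t}_w$. Using that $\mathfrak{t}_w=\mathfrak{t}_{Z_i}=\ker v_{Z_i}$ and that $w(e_i)$ is the modular weight $v_{Z_i}$, the $\mathfrak{t}_w^*$-component of $y_{A,e_i}\circ\mu_0$ is the tuple of the $H_Y$ with $Y\in\mathfrak{t}_w$, each of which has vanishing $\log$-coefficient near $Z_i$ (because $\langle v_{Z_i},Y\rangle=0$) and is therefore honestly smooth there. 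The $\mathbb{R}$-component is $\pm\exp\bigl(H_A/\langle w(e_i),A\rangle\bigr)$; writing $H_A=c\log|y_i|+g$ for a defining function $y_i$ of $Z_i$ and $g\in C^{\infty}$, and noting that $c=\langle v_{Z_i},A\rangle=\langle w(e_i),A\rangle$ by the very definition of the modular weight, this component equals $\pm|y_i|e^{g/c}$. A short check of the two ways the physical sides of $Z_i$ can be matched with the endpoints of $e_i$ shows that in each case this glues across $Z_i$ to a single smooth function of the form $\pm y_i e^{g/c}$, which vanishes transversally along $Z_i$. Hence $\mu$ is a smooth map with $\mu^{-1}(\mathcal{Z}_{\mathcal{G}})=Z$, transverse to $\mathcal{Z}_{\mathcal{G}}$, i.e. a $b$-map; invariance on $Z$ follows by continuity; and $\mu$ is a moment map in the sense of Definition \ref{def:mm}, since $X\mapsto\mu^X=\langle\hat x\circ\mu,X\rangle=H_X$ is linear and $\iota_{X^{\#}}\omega=dH_X=d\mu^X$. (The local form of Proposition \ref{prop:localmomentmap} can be used here as a cross-check that this $\mu$ restricts near $Z$ to the expected model.)

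I expect the main obstacle to be bookkeeping in that last step: keeping straight the identifications $\mathfrak{t}_w\cong\mathfrak{t}_{Z_i}$, $w(e_i)=v_{Z_i}$, and $c=\langle w(e_i),A\rangle$, and verifying that the $\pm$ sign built into the definition of $y_{A,e_i}$ is absorbed by the exponential so that the $\mathbb{R}$-coordinate really is a smooth defining function transverse to $Z_i$. By contrast, the global consistency issue one might fear — whether the locally defined moment maps of Proposition \ref{prop:localmomentmap} can be patched coherently around a cycle of $G$ — never arises, precisely because the single linear family $\{H_X\}$ is chosen on all of $M$ at once; a byproduct of this is that $\mu$ is unique up to a translation by an element of $\mathfrak{t}^*$, since any closed ${}^bC^{\infty}$ function on the connected manifold $M$ is constant.
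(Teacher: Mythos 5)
Your argument is correct, but it reaches the conclusion by a genuinely different route than the paper. The paper constructs $\mu$ by gluing: it fixes a classical moment map on one component $W_0$ of $M\setminus Z$, uses the local normal form near each hypersurface (Proposition \ref{prop:localmomentmap}, rewritten as the $b$-moment map (\ref{eqn:newlocalmm})) to cross each $Z_i$, translating at every step so that consecutive pieces agree on overlaps; when $G$ is a cycle it must then check that the accumulated translation around the cycle vanishes, which it does by showing that $\int_\gamma \iota_{X^{\#}}\omega=0$ for $X\in\mathfrak{t}_Z$ and that the principal-value integral $\bint{\gamma}{}\iota_{X^{\#}}\omega$ vanishes for $X\notin\mathfrak{t}_Z$. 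You instead fix one global linear family of ${}^bC^{\infty}$ primitives $\{H_X\}$ --- available because the paper's definition of Hamiltonian demands a \emph{global} primitive of each $\iota_{X^{\#}}\omega$ --- and reduce everything to a local verification in the charts $y_{A,e}$ that the resulting map extends smoothly and transversally across each $Z_i$. This eliminates both the line/cycle case split and the monodromy computation, since the global consistency is absorbed into the exactness hypothesis; the price is that you must redo by hand the smooth-extension step that the paper extracts from Proposition \ref{prop:localmomentmap}, and the computation you sketch (the log-coefficient of $H_A$ near $Z_i$ equals $\langle w(e_i),A\rangle=\langle v_{Z_i},A\rangle$, so $\pm\exp\bigl(H_A/\langle w(e_i),A\rangle\bigr)=\pm|y_i|e^{g/c}$ glues to $\pm y_ie^{g/c}$, a transverse defining function) is exactly right and is the crux in either approach. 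The paper's version has the advantage of making the zig-zag structure of the image explicit and of isolating the $b$-analogue of ``exact forms have zero periods,'' which would be the essential content if one only assumed local exactness; your version is shorter and uniform. Your closing remark that $\mu$ is unique up to translation (because a ${}^bC^{\infty}$ function with vanishing differential on connected $M$ is an honest constant) matches the paper's observation after the theorem.
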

\begin{proof}
We first consider the case when the graph is a line: for $1 \leq i \leq N$, the edge $e_i$ connects vertex $v_{i-1}$ to vertex $v_i$. Let $W_i$ be the component of $M \backslash Z$ corresponding to $v_i$, and let $Z_i$ be the component of $Z$ corresponding to $e_i$. Fix any moment map $\mu_{W_0}: W_0 \rightarrow \mathfrak{t}^*$ for the action on $W_0$. Identifying the codomain $\mathfrak{t}^*$ with $\{v_0\} \times \mathfrak{t}^* \subseteq \mathcal{R}_{\mathcal{G}}$ gives a moment map $\mu_{W_0}: W_0 \rightarrow \mathcal{R}_{\mathcal{G}}$. By Proposition \ref{prop:localmomentmap}, there is a moment map $\mu_{U_1}$ for the $\mathbb{T}^n$-action in a neighborhood $U_1$ of $Z_1$. The two moment maps
\[
\restr{\mu_{W_0}}{W_0 \cap U_1} \ \ \textrm{and} \ \ \restr{\mu_{U_1}}{W_0 \cap U_1}
\]
correspond to the same $\mathbb{T}^n$-action on $W_0 \cap U_1$, so after a translation we may glue $\mu_{W_0}$ and $\mu_{U_1}$ into a moment map defined on all of $W_0 \cup U_1$. We continue extending the moment map in this manner until it is defined on all of $M$.

When the adjacency graph is a cycle, the above construction breaks down in the final stage; after choosing the correct translation of the moment map $\mu_{U_{N}}$ so that it agrees with $\mu_{W_{N-1}}$ on the overlap of their domains, it may not be the case that $\mu_{U_{N}}$ agrees with $\mu_{W_0}$ on the overlap of their domains. Pick some $p \in U_N \cap W_0$, and define
\[
x_{\textrm{start}} = \mu_{W_0}(p) \ \ \textrm{and}  \ \ x_{\textrm{end}} = \mu_{U_N}(p)
\]
and assume without loss of generality that $x_{\textrm{start}} = (v_0, 0) \in \mathcal{R}_{\mathcal{G}}$. Let $\gamma: \SSS^1 = \mathbb{R} / \mathbb{Z} \rightarrow M$ be a loop with $\gamma(0) = \gamma(1) = p$ that visits the sets $W_0, U_1, W_1, \dots, W_{N-1}, U_N, W_N = W_0$ in that order. Then, for any $X \in \mathfrak{t}$, we have
\[
x_{\textrm{end}} = (v_0, x) \ \ \textrm{where} \ \mu^{X}(x) = \left\{ \begin{array}{c l} \int_\gamma \iota_{X^{\#}}\omega & X \in \mathfrak{t}_Z\\ \bint{\gamma}{} \iota_{X^{\#}}\omega & X \notin \mathfrak{t}_Z \end{array} \right.
\]
When $X \in \mathfrak{t}_Z$, the 1-form $\iota_{X^{\#}}\omega$ has a smooth primitive, so the above integral equals zero. When $X \notin \mathfrak{t}_Z$, the 1-form $\iota_{X^{\#}}\omega$ has a ${^b}C^{\infty}$ primitive, and since the Liouville volume of the pullback is still zero the above integral equals zero. Therefore, $x_{\textrm{end}} = (v_0, 0)$ and the moment maps for the sets $W_i$ and $U_i$ glue into a moment map $\mu: M \rightarrow \mathcal{R}_{\mathcal{G}}$.
\end{proof}
Theorem \ref{thm:mmexistence} proves that every Hamiltonian toric action has a moment map. However, note that as in the classic case different translations of the moment map correspond to the same action.

\section{Delzant theorem}

In this section, we prove a Delzant theorem in $b$-geometry. We begin by defining the notion of a $b$-symplectic toric manifold and of a Delzant $b$-polytope.

\begin{definition}\label{def:bstm}
A \textbf{$b$-symplectic toric manifold} is
\[
(M^{2n}, Z, \omega, \mu: M \rightarrow \mathcal{R}_{\mathcal{G}})
\]
where $(M, Z, \omega)$ is a $b$-symplectic manifold and $\mu$ is a moment map for a toric action on $(M, Z, \omega)$.
\end{definition}

Notice that the definition of a $b$-symplectic toric manifold also implicitly includes the information of $\mathcal{G}$. The definition of a polytope in $\mathcal{R}_{\mathcal{G}}$ will use the definition of a half-space; it might be helpful to look at the examples of half-spaces in $\mathcal{R}_{\mathcal{G}}$  in Figure \ref{fig:halfspaces} before reading the formal definition. Although the boundaries of a half-spaces in $\mathcal{R}_{\mathcal{G}}$ are straight lines when restricted to each $\{k\} \times \mathfrak{t}^* \subseteq \mathcal{R}_{\mathcal{G}}$, they appear curved in Figure \ref{fig:halfspaces} because of the way $\mathcal{R}_{\mathcal{G}}$ is drawn. Notice that the boundary of a half-space will not intersect $\mathcal{Z}_{\mathcal{G}}$ unless it is perpendicular to it.

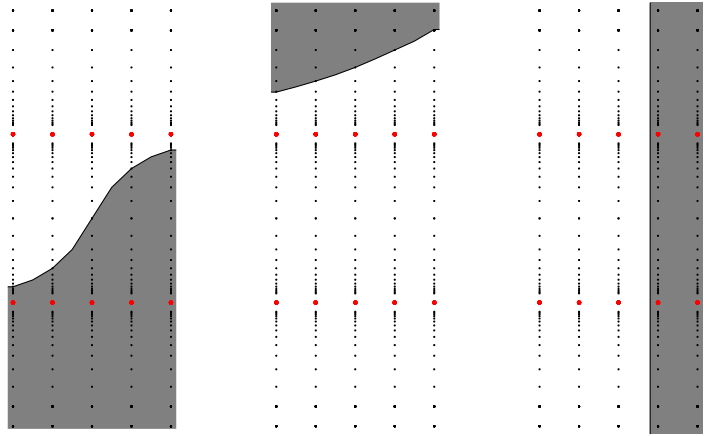
\begin{figure}[ht]
\centering
\begin{tikzpicture}[scale = 0.7]

\pgfmathsetmacro{\startxone}{0}
\pgfmathsetmacro{\startxtwo}{5}
\pgfmathsetmacro{\startxthree}{10}

\pgfmathsetmacro{\startxt}{6} \pgfmathsetmacro{\verlinebottom}{0}

\fill[gray] (-1.6, 1.5 + 0.1 - 1.3) -- (-1.5, 1.5 + 0.1 - 1.3) -- (-1.125, 1.5 + 0.1 - 1.17) -- (-.75, 1.5 + 0.1 - .948) -- (-0.375, 1.5 + 0.1 - .59) -- (0, 1.5 + 0.1) -- (0.375, 1.5 + 0.1 + .59) -- (.75, 1.5 + 0.1 + .948) -- (1.125, 1.5 + 0.1 + 1.17) -- (1.5, 1.5 + 0.1 + 1.3)  -- (1.6, 1.5 + 0.1 + 1.3) -- (1.6, -1.5 - 0.1 - 0.5 - 0.3) -- (-1.6, -1.5 - 0.1 - 0.5 - 0.3) -- cycle;
\draw (-1.6, 1.5 + 0.1 - 1.3) -- (-1.5, 1.5 + 0.1 - 1.3) -- (-1.125, 1.5 + 0.1 - 1.17) -- (-.75, 1.5 + 0.1 - .948) -- (-0.375, 1.5 + 0.1 - .59) -- (0, 1.5 + 0.1) -- (0.375, 1.5 + 0.1 + .59) -- (.75, 1.5 + 0.1 + .948) -- (1.125, 1.5 + 0.1 + 1.17) -- (1.5, 1.5 + 0.1 + 1.3)  -- (1.6, 1.5 + 0.1 + 1.3);

\fill[gray] (\startxtwo -1.6, 3.2 + 1.5 + 0.1 - 0.8) -- (\startxtwo -1.5, 3.2 + 1.5 + 0.1 - 0.8) -- (\startxtwo -1.125, 3.2 + 1.5 + 0.1 - .7) -- (\startxtwo -.75, 3.2 + 1.5 + 0.1 - .59) -- (\startxtwo -0.375, 3.2 + 1.5 + 0.1 - .47) -- (\startxtwo +0, 3.2 + 1.5 + 0.1 - .33) -- (\startxtwo +0.375, 3.2 + 1.5 + 0.1 -.17) -- (\startxtwo +.75, 3.2 + 1.5 + 0.1 + 0) -- (\startxtwo +1.125, 3.2 + 1.5 + 0.1 + .17) -- (\startxtwo +1.5, 3.2 + 1.5 + 0.1 + .39)  -- (\startxtwo +1.6, 3.2 + 1.5 + 0.1 + .39) -- (\startxtwo +1.6, 5.7) -- (\startxtwo -1.6, 5.7) -- cycle;
\draw (\startxtwo -1.6, 3.2 + 1.5 + 0.1 - 0.8) -- (\startxtwo -1.5, 3.2 + 1.5 + 0.1 - 0.8) -- (\startxtwo -1.125, 3.2 + 1.5 + 0.1 - .7) -- (\startxtwo -.75, 3.2 + 1.5 + 0.1 - .59) -- (\startxtwo -0.375, 3.2 + 1.5 + 0.1 - .47) -- (\startxtwo +0, 3.2 + 1.5 + 0.1 - .33) -- (\startxtwo +0.375, 3.2 + 1.5 + 0.1 -.17) -- (\startxtwo +.75, 3.2 + 1.5 + 0.1 + 0) -- (\startxtwo +1.125, 3.2 + 1.5 + 0.1 + .17) -- (\startxtwo +1.5, 3.2 + 1.5 + 0.1 + .39)  -- (\startxtwo +1.6, 3.2 + 1.5 + 0.1 + .39);

\fill[gray] (\startxthree + 0.6, 5.7) -- (\startxthree + 0.6, -2.5) -- (\startxthree + 1.6, -2.5) -- (\startxthree + 1.6, 5.7) -- cycle;
\draw (\startxthree + 0.6, 5.7) -- (\startxthree + 0.6, -2.5);

\foreach \horizshift in {-2, -1, 0, 1, 2}{
\foreach \paramt in {-3, -2.75, ..., 0.01}	
	{
	\pgfmathsetmacro{\height}{1.5 * exp(\paramt)}
	\pgfmathsetmacro{\squeezedhorizshift}{0.75 * \horizshift}
	\draw[fill = black] (\startxone + \squeezedhorizshift, \verlinebottom + \height + 0.1) circle(.1mm);
	\draw[fill = black] (\startxone + \squeezedhorizshift, \verlinebottom - \height - 0.1) circle(.1mm);
	\draw[fill = black] (\startxone + \squeezedhorizshift, \verlinebottom + 3.2 + \height + 0.1) circle(.1mm);
	\draw[fill = black] (\startxone + \squeezedhorizshift, \verlinebottom + 3.2 - \height - 0.1) circle(.1mm);
	
	\draw[fill = black] (\startxtwo + \squeezedhorizshift, \verlinebottom + \height + 0.1) circle(.1mm);
	\draw[fill = black] (\startxtwo + \squeezedhorizshift, \verlinebottom - \height - 0.1) circle(.1mm);
	\draw[fill = black] (\startxtwo + \squeezedhorizshift, \verlinebottom + 3.2 + \height + 0.1) circle(.1mm);
	\draw[fill = black] (\startxtwo + \squeezedhorizshift, \verlinebottom + 3.2 - \height - 0.1) circle(.1mm);
	
	\draw[fill = black] (\startxthree + \squeezedhorizshift, \verlinebottom + \height + 0.1) circle(.1mm);
	\draw[fill = black] (\startxthree + \squeezedhorizshift, \verlinebottom - \height - 0.1) circle(.1mm);
	\draw[fill = black] (\startxthree + \squeezedhorizshift, \verlinebottom + 3.2 + \height + 0.1) circle(.1mm);
	\draw[fill = black] (\startxthree + \squeezedhorizshift, \verlinebottom + 3.2 - \height - 0.1) circle(.1mm);

	\foreach \extratail in {0.25, 0.5}
		{
		\pgfmathsetmacro{\extrat}{\extratail * 1.5}
		\draw[fill = black] (\startxone + \squeezedhorizshift, \verlinebottom + 3.2 + 0.1 + 1.5 + \extrat) circle(.1mm);
		\draw[fill = black] (\startxone + \squeezedhorizshift, \verlinebottom -0.1 - 1.5 - \extrat) circle(.1mm);		
		
		\draw[fill = black] (\startxtwo + \squeezedhorizshift, \verlinebottom + 3.2 + 0.1 + 1.5 + \extrat) circle(.1mm);
		\draw[fill = black] (\startxtwo + \squeezedhorizshift, \verlinebottom -0.1 - 1.5 - \extrat) circle(.1mm);		
		
		\draw[fill = black] (\startxthree + \squeezedhorizshift, \verlinebottom + 3.2 + 0.1 + 1.5 + \extrat) circle(.1mm);
		\draw[fill = black] (\startxthree + \squeezedhorizshift, \verlinebottom -0.1 - 1.5 - \extrat) circle(.1mm);		
		}
	
	\draw[red, fill = red] (\startxone + \squeezedhorizshift, \verlinebottom) circle(.3mm);
	\draw[red, fill = red] (\startxone + \squeezedhorizshift, \verlinebottom + 3.2) circle(.3mm);
	
	\draw[red, fill = red] (\startxtwo + \squeezedhorizshift, \verlinebottom) circle(.3mm);
	\draw[red, fill = red] (\startxtwo + \squeezedhorizshift, \verlinebottom + 3.2) circle(.3mm);
	
	\draw[red, fill = red] (\startxthree + \squeezedhorizshift, \verlinebottom) circle(.3mm);
	\draw[red, fill = red] (\startxthree + \squeezedhorizshift, \verlinebottom + 3.2) circle(.3mm);
	
	}
	}
	
\end{tikzpicture}
\caption{Examples of half-spaces in $\mathcal{R}_{\mathcal{G}}$.}
\label{fig:halfspaces}
\end{figure}

\begin{definition}\label{def:halfspace} Consider the two following kinds of hypersurfaces in $\mathcal{R}_{\mathcal{G}}$, where $X \in \mathfrak{t}$, $Y \in \mathfrak{t}_w$, $k \in \mathbb{R}$ and $v$ is a vertex of $G$:
\begin{align*}
&A_{X, k, v} = \{(v, \xi) \mid \langle \xi, X \rangle = k\} \subseteq \{v\} \times \mathfrak{t}^* \subseteq \mathcal{R}_{\mathcal{G}},\\
&B_{Y, k} = \overline{\{(v, \xi) \mid \langle \xi, Y \rangle = k, v \ \textrm{a vertex of} \ G\}} \subseteq \overline{\mathcal{R}_{\mathcal{G}} \backslash \mathcal{Z}_{\mathcal{G}}} = \mathcal{R}_{\mathcal{G}}.
\end{align*}

When $G$ is a line, the complement of any hypersurface in $\mathcal{R}_{\mathcal{G}}$ is two connected components. The closure of any such component is a \textbf{half-space} in $\mathcal{R}_{\mathcal{G}}$. When $G$ is a cycle, the hypersurfaces of type $A_{X, k, c}$ do not separate the space, so only the closure of a connected component of the complement of some $B_{X, k} \subseteq \mathcal{R}_{\mathcal{G}}$ is called a \textbf{half-space}.
\end{definition}
In Figure \ref{fig:halfspaces}, the first two images are half-spaces of type $A_{X, k, c}$, while the rightmost image is a half-space of type $B_{X, k}$.
\begin{definition}
A \textbf{$b$-polytope} in $\mathcal{R}_{\mathcal{G}}$ is a bounded subset $P$ that intersects each component of $Z_{\mathcal{G}}$ and can be expressed as a finite intersection of half-spaces.
\end{definition}
If the condition that $P$ must intersect each component of $Z_{\mathcal{G}}$ were removed from the definition of a polytope, then for any $\mathcal{G} = (G, w)$ and $\mathcal{G}' = (G', w')$ such that $G$ is a subgraph of $G'$ and $w'$ is an extension of $w$, any polytope in $\mathcal{R}_{\mathcal{G}}$ would also be a polytope in $\mathcal{R}_{\mathcal{G}'}$. The upcoming statement of Theorem \ref{thm:bDelzant}, which generalizes the Delzant theorem, is easier to state when we disallow the possibility of having such extraneous parts of the $b$ moment codomain.

\begin{example}\label{examp:bpolytopes}
Figure \ref{fig:polytopes} shows two examples of $b$-polytopes. In both cases, the torus has dimension two. The $b$ moment codomain on the left corresponds to a graph with an edge and two vertices, while the $b$ moment codomain on the right corresponds to the cycle on two vertices (the top of the picture on the right is identified with the bottom of the picture).
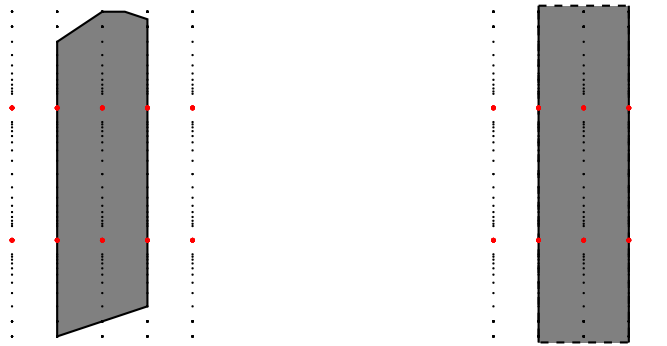
\begin{figure}[ht]
\centering
\begin{tikzpicture}[scale = 0.8]

\pgfmathsetmacro{\startxone}{0}
\pgfmathsetmacro{\startxtwo}{8}

\pgfmathsetmacro{\startxt}{6} \pgfmathsetmacro{\verlinebottom}{0}

\draw [thick, fill = gray] (\startxone - 0.75, \verlinebottom + 2.2 + 0.1 + 1) -- (\startxone, \verlinebottom + 2.2 + 0.1 + 1.5)  -- (\startxone + 0.375, \verlinebottom + 2.2 + 0.1 + 1.5) -- (\startxone + 0.75, \verlinebottom + 2.2 + 0.1 + 1.375) -- (\startxone + 0.75, \verlinebottom - 0.1 - 1) 	-- (\startxone - 0.75, \verlinebottom - 0.1 - 1.5) -- cycle;

\fill [gray] (\startxtwo - 0.75, \verlinebottom + 2.2 + 0.1 + 1.6) -- (\startxtwo + 0.75, \verlinebottom + 2.2 + 0.1 + 1.6)  -- (\startxtwo + 0.75, \verlinebottom - 0.1 - 1.6) 	-- (\startxtwo - 0.75, \verlinebottom - 0.1 - 1.6) -- cycle;

\draw[thick, dashed] (\startxtwo - 0.75, \verlinebottom + 2.2 + 0.1 + 1.6) -- (\startxtwo + 0.75, \verlinebottom + 2.2 + 0.1 + 1.6)  -- (\startxtwo + 0.75, \verlinebottom - 0.1 - 1.6) 	-- (\startxtwo - 0.75, \verlinebottom - 0.1 - 1.6) -- cycle;

\draw[thick] (\startxtwo + 0.75, \verlinebottom + 2.2 + 0.1 + 1.6)  -- (\startxtwo + 0.75, \verlinebottom - 0.1 - 1.6);
\draw[thick] (\startxtwo - 0.75, \verlinebottom + 2.2 + 0.1 + 1.6)  -- (\startxtwo - 0.75, \verlinebottom - 0.1 - 1.6);

\foreach \horizshift in {-2, -1, 0, 1, 2}{
\foreach \paramt in {-2, -1.75, ..., 0.01}	
	{
	\pgfmathsetmacro{\height}{1 * exp(\paramt)}
	\pgfmathsetmacro{\squeezedhorizshift}{0.75 * \horizshift}
	\draw[fill = black] (\startxone + \squeezedhorizshift, \verlinebottom + \height + 0.1) circle(.1mm);
	\draw[fill = black] (\startxone + \squeezedhorizshift, \verlinebottom - \height - 0.1) circle(.1mm);
	\draw[fill = black] (\startxone + \squeezedhorizshift, \verlinebottom + 2.2 + \height + 0.1) circle(.1mm);
	\draw[fill = black] (\startxone + \squeezedhorizshift, \verlinebottom + 2.2 - \height - 0.1) circle(.1mm);
	
	\draw[fill = black] (\startxtwo + \squeezedhorizshift, \verlinebottom + \height + 0.1) circle(.1mm);
	\draw[fill = black] (\startxtwo + \squeezedhorizshift, \verlinebottom - \height - 0.1) circle(.1mm);
	\draw[fill = black] (\startxtwo + \squeezedhorizshift, \verlinebottom + 2.2 + \height + 0.1) circle(.1mm);
	\draw[fill = black] (\startxtwo + \squeezedhorizshift, \verlinebottom + 2.2 - \height - 0.1) circle(.1mm);		
	
	\foreach \extratail in {0.25, 0.5}
		{
		\pgfmathsetmacro{\extrat}{\extratail * 1}
		\draw[fill = black] (\startxone + \squeezedhorizshift, \verlinebottom + 2.2 + 0.1 + 1 + \extrat) circle(.1mm);
		\draw[fill = black] (\startxone + \squeezedhorizshift, \verlinebottom -0.1 - 1 - \extrat) circle(.1mm);		
		
		\draw[fill = black] (\startxtwo + \squeezedhorizshift, \verlinebottom + 2.2 + 0.1 + 1 + \extrat) circle(.1mm);
		\draw[fill = black] (\startxtwo + \squeezedhorizshift, \verlinebottom -0.1 - 1 - \extrat) circle(.1mm);		
		}
	
	\draw[red, fill = red] (\startxone + \squeezedhorizshift, \verlinebottom) circle(.3mm);
	\draw[red, fill = red] (\startxone + \squeezedhorizshift, \verlinebottom + 2.2) circle(.3mm);
	
	\draw[red, fill = red] (\startxtwo + \squeezedhorizshift, \verlinebottom) circle(.3mm);
	\draw[red, fill = red] (\startxtwo + \squeezedhorizshift, \verlinebottom + 2.2) circle(.3mm);

	}
	}
	
\end{tikzpicture}
\caption{Examples of $b$-polytopes.}
\label{fig:polytopes}
\end{figure}
\end{example}

The definitions of many features of classic polytopes, such as \textbf{facets}, \textbf{edges}, and \textbf{vertices}, generalize in a natural way to $b$-polytopes, as does the notion of a \textbf{rational} polytope (one in which the $X$'s and $k$'s in Definition \ref{def:halfspace} are rational). We state some properties of $b$-polytopes, all of which are straightforward consequences of the definition.

\begin{itemize}
\item Because $P$ must intersect each component of $Z_{\mathcal{G}}$, the only hypersurfaces of type $A_{X, k, v}$ that will appear as boundaries of half-spaces constituting $P$ will be have $c = a-1$ or $c = N$.

\item No vertex of $P$ lies on $Z_{\mathcal{G}}$.

\item Given a polytope $P \subseteq \mathcal{R}_{\mathcal{G}}$, there is a (classic) polytope $\Delta_Z \subseteq \mathcal{Z}_{\mathcal{G}}$ having the property that the intersection of $P$ with each component of $\mathcal{Z}_{\mathcal{G}}$ is $\Delta_Z$.

\item $P$ is locally isomorphic to $\{-\varepsilon \leq y_i \leq \varepsilon\} \times \Delta_Z$ near each component of $\mathcal{Z}_{\mathcal{G}}$, and is isomorphic to $\Delta_Z \times \mathbb{R}$ in any component $\{v\} \times \mathfrak{t}^* \subseteq \mathcal{R}_{\mathcal{G}}$ except those vertices $v$ which are leaves of $G$.

\item When $v$ is a leaf of $G$, the restriction of $P$ to $\{v\} \times \mathfrak{t}^*$ is a polyhedron with recession cone\footnote{The recession cone of a convex set $A \subseteq V$ is $recc(A)=\{v\in V\mid \forall_{a\in A} \,a+v\in A\}$.} equal to $\mathbb{R}_0^- w(e)$, where $e=(v,v')$ is an edge.
\end{itemize}

\noindent Because no vertex of $P$ lies on $\mathcal{Z}_{\mathcal{G}}$, the definition of a Delzant polytope generalizes easily to the context of $b$-polytopes.

\begin{definition}
When $G$ is a line, a $b$-polytope $P \subseteq \mathcal{R}_{\mathcal{G}}$ is \textbf{Delzant} if for every vertex $v$ of $P$, there is a lattice basis $\{u_i\}$ of $\mathfrak{t}^*$ such that the edges incident to $v$ can be written near $v$ in the form $v + tu_i$ for $t \geq 0$. When $G$ is a cycle, a $b$-polytope $P \subseteq \mathcal{R}_{\mathcal{G}}$ (which has no vertices) is \textbf{Delzant} if the polytope $\Delta_Z \subseteq \mathfrak{t}_w^*$ is Delzant.
\end{definition}

The left polytope in Figure \ref{fig:polytopes} is not Delzant -- the Delzant condition is not satisfied at the vertex at the top of the picture in the center column of lattice points. The right polytope in Figure \ref{fig:polytopes} is Delzant. Given a Delzant $b$-polytope $P$, the intersection of $P$ with a component of $\mathcal{Z}_{\mathcal{G}}$ is a Delzant polytope in $\mathfrak{t}_w^*$. By the properties of $b$-polytopes, this Delzant polytope does not depend on which component of $\mathcal{Z}_{\mathcal{G}}$ is chosen.

\begin{definition} Given a $b$-polytope $P$, the \textbf{extremal polytope} $\Delta_P$ is the Delzant polytope in $\mathfrak{t}_w^*$ given by the intersection of $P$ with any connected component of $\mathcal{Z}_{\mathcal{G}}$.
\end{definition}

\noindent Before proving the Delzant theorem in our context, we require the following:

\begin{proposition}\label{prop:technicalpropfordelzant}
Let $(X_{\Delta}, \omega_{\Delta}, \mathbb{T}^{n-1}, \mu_{\Delta}:X_{\Delta} \rightarrow \Delta)$ be a (classic) compact connected symplectic toric manifold, and $a < b \in \mathbb{R}$. Consider the non-compact symplectic toric manifold
\[
(M = (a, b) \times \SSS^1 \times X_{\Delta}, \omega_M = dy \wedge d\theta + \omega_{\Delta}, \SSS^1 \times \mathbb{T}^{n-1}, (y, \mu_\Delta): (a, b) \times \Delta)
\]
where $y$ and $\theta$ are the standard coordinates on $(a, b)$ and $\SSS^1$ respectively. This symplectic toric manifold has the property that any symplectic vector field which is tangent to the fibers of the moment map is Hamiltonian.
\end{proposition}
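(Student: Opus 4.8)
The plan is to show that the closed one-form $\iota_V\omega_M$ is exact by checking that its single nontrivial period vanishes. Since $M = (a,b)\times\SSS^1\times X_\Delta$, the K\"unneth formula gives $H^1(M;\RR) \cong H^1((a,b)) \oplus H^1(\SSS^1) \oplus H^1(X_\Delta)$, and the first and third summands vanish: $(a,b)$ is contractible, and a compact symplectic toric manifold is simply connected (a standard fact, provable via Morse theory on a generic component of $\mu_\Delta$, all of whose critical points have even index). Thus $H^1(M;\RR)\cong\RR$, generated by the class of $d\theta$, which pairs nontrivially under the de Rham pairing with the homology class of the loop $\gamma(s) = (y_0, s, x_0)$ for any fixed $y_0 \in (a,b)$ and $x_0 \in X_\Delta$. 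Because $V$ is symplectic, $\iota_V\omega_M$ is closed, so by perfectness of the pairing $H^1_{dR}(M)\times H_1(M;\RR)\to\RR$ it suffices to prove $\int_\gamma \iota_V\omega_M = 0$ for one such $\gamma$; then $[\iota_V\omega_M]=0$ and $V$ is Hamiltonian.

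Next I would choose the basepoint $x_0\in X_\Delta$ so that $\mu_\Delta(x_0)$ lies in the interior of $\Delta$. Then the fiber of the moment map $\Phi = (y,\mu_\Delta)$ through $\gamma$ is $\Phi^{-1}(y_0,\mu_\Delta(x_0)) = \{y_0\}\times\SSS^1\times(\TT^{n-1}\cdot x_0)$, a smooth $\TT^n$-orbit whose tangent space at each point is spanned by $\partial_\theta$ together with the fundamental vector fields of the $\TT^{n-1}$-action on $X_\Delta$. Since $V$ is assumed tangent to the fibers of $\Phi$, at each point $\gamma(s)$ we may write $V = a(s)\,\partial_\theta + W(s)$ with $a(s)\in\RR$ and $W(s)$ tangent to the $X_\Delta$ factor.

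The period computation is then immediate from the product structure: $\omega_M = dy\wedge d\theta + \omega_\Delta$ is block-diagonal with respect to the splitting $M = \bigl((a,b)\times\SSS^1\bigr)\times X_\Delta$, so along $\gamma$ (where $\dot\gamma = \partial_\theta$) we have
\[
(\iota_V\omega_M)(\partial_\theta) = \omega_M(a\,\partial_\theta + W,\partial_\theta) = \omega_M(W,\partial_\theta) = (dy\wedge d\theta)(W,\partial_\theta) + \omega_\Delta(W,\partial_\theta) = 0,
\]
since $dy$ and $d\theta$ vanish on the $X_\Delta$-directed vector $W$ while $\partial_\theta$ lies in the kernel of the pulled-back form $\omega_\Delta$. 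Hence $\gamma^*(\iota_V\omega_M) = 0$, so $\int_\gamma \iota_V\omega_M = 0$, which completes the argument.

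I do not anticipate a genuine obstacle here. The two points that require care are: (i) confirming that $\gamma$ represents a generator of $H_1(M;\RR)$, which is exactly where simple connectivity of $X_\Delta$ enters; and (ii) arranging $\gamma$ inside a regular fiber of $\Phi$, which is what allows the hypothesis ``tangent to the fibers'' to be upgraded to the explicit decomposition $V|_\gamma = a\,\partial_\theta + W$ that drives the computation.
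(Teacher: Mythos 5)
Your proposal is correct and follows essentially the same route as the paper's proof: both reduce to showing that the single period of the closed form $\iota_V\omega_M$ along the loop $\gamma(s)=(y_0,s,x_0)$ vanishes, after using the K\"unneth formula together with the vanishing of $H^1(X_\Delta)$ to identify $H^1(M)$ as one-dimensional and generated by $[d\theta]$. The only cosmetic difference is that the paper deduces the vanishing of the period from the general fact that moment-map fibers are isotropic, whereas you verify it by a direct block-diagonal computation of $\omega_M(V,\partial_\theta)$ along the fiber.
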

\begin{proof} Choose any $y_0 \in (a, b), x_0 \in X_{\Delta}$, and consider the loop
$$\gamma: \SSS^1 \rightarrow (a,b) \times \SSS^1 \times X_{\Delta}, \text{\,\,\,\,\,\,}t \mapsto (y_0, t, x_0).$$
Integration of a $1$-form on $\gamma$ represents an element of $H^1(M)^*$ which pairs nontrivially with $[d\theta]$ and hence is itself nontrivial. By the K\"unneth formula,
\[
H^1(M) \cong (H^0((a, b) \times \SSS^1) \otimes H^1(X_{\Delta})) \oplus (H^1((a, b) \times \SSS^1) \otimes H^0(X_{\Delta}))
\]
which is one-dimensional due to the fact that the cohomology of a compact symplectic toric manifold is supported in even degrees. Therefore, a closed 1-form on $M$ is exact precisely if its integral along $\gamma$ is zero.

Let $v$ be a symplectic vector field on $M$ tangent to the fibers of the moment map. Because the fibers of the moment map are isotropic and because the image of $\gamma$ is contained in a single such fiber, it follows that $\omega_M(v, \gamma_*(\partial / \partial t)) = 0$ at all points in the image of $\gamma$. Therefore, the integral of $\iota_{v}\omega$ along $\gamma$ vanishes, so $\iota_v\omega$ is exact and therefore $v$ is Hamiltonian.
\end{proof}

\begin{theorem}\label{thm:bDelzant}
The map
\begin{equation}\label{eqn:bDelzantbijection1}
\left\{ \begin{array}{c} $b$-symplectic \ toric  \ manifolds\\ (M, Z, \omega, \mu:M \rightarrow \mathcal{R}_\mathcal{G}) \end{array} \right\} \rightarrow \left\{\begin{array}{c} \textrm{Delzant b-polytopes}\\ \textrm{in} \ \mathcal{R}_{\mathcal{G}} \end{array}\right\}
\end{equation}
that sends a $b$-symplectic toric manifold to the image of its moment map is a bijection, where $b$-symplectic toric manifolds are considered up to equivariant $b$-symplectomorphisms that preserve the moment map.
\end{theorem}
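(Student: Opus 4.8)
The plan is to verify the three standard features of a Delzant-type correspondence: well-definedness (the image of a moment map is a Delzant $b$-polytope), surjectivity, and injectivity. When $G$ is a single vertex the statement is exactly the classical Delzant theorem, so I will assume $G$ has at least one edge.

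\textbf{Well-definedness.} First I would fix a component $W$ of $M\setminus Z$ and invoke Proposition~\ref{prop:localmomentmap}: near each adjacent component $Z'$ of $Z$, the image $\mu(W)$ is the product of a Delzant polytope with the ray $\RR_0^- v_{Z'}$. Cutting $W$ along the hypersurfaces $W_{=\varepsilon}$ as in Claim~\ref{clm:Wconvexity} produces a compact connected symplectic toric manifold, whose moment image is a classical Delzant polytope by Atiyah--Guillemin--Sternberg and Delzant; letting $\varepsilon\to0$ identifies $\mu(W)$ with this polytope, the cut facets replaced by infinite prisms over the adjacent $Z'$. Gluing these descriptions over all components $W$ through the charts $y_{A,e}$ of the $b$ moment codomain would exhibit $\mu(M)$ as a bounded finite intersection of half-spaces meeting every component of $\mathcal{Z}_\mathcal{G}$; the Delzant condition at each vertex is inherited from the cut manifolds, and no vertex of $\mu(M)$ lands on $\mathcal{Z}_\mathcal{G}$ by Proposition~\ref{prop:localmomentmap}. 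Hence $\mu(M)$ is a Delzant $b$-polytope.

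\textbf{Surjectivity.} Given a Delzant $b$-polytope $P$ with extremal polytope $\Delta_P\subseteq\mathfrak{t}_w^*$, I would build the classical symplectic toric manifold $(X_{\Delta_P},\omega_{\Delta_P},\mu_{\Delta_P})$ with its $\mathbb{T}_Z^{n-1}$-action, and take the $b$-symplectic surface $S$ whose adjacency graph is $G$ and whose modular periods realize the scalars determined by $w$ (Theorem~\ref{thm:surfaces}). The product $X_{\Delta_P}\times S$ is then a $b$-symplectic toric manifold with moment image $\Delta_P\times\mu_S(S)$; performing $\mathbb{T}^n$-equivariant symplectic cuts in $(X_{\Delta_P}\times S)\setminus Z$ --- one for each facet of $P$ not already present --- exactly as in Claim~\ref{clm:Wconvexity}, should cut this down to a $b$-symplectic toric manifold with moment image $P$. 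Equivalently, one can assemble $P$ directly from the local models of Proposition~\ref{prop:local_model} over the edges of $G$, products $X_{\Delta_P}\times\SSS^1\times I_v$ over the interior vertices, and compact toric caps over the leaf vertices, glued along collars using the equivariant $b$-Moser theorem~\ref{equivglobalmoser}; in the cycle case the chain closes up because the $b$-periods controlling the monodromy vanish, as in the proof of Theorem~\ref{thm:mmexistence}.

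\textbf{Injectivity.} Suppose $(M,Z,\omega,\mu)$ and $(M',Z',\omega',\mu')$ share the moment image $P$. Their extremal polytopes both equal $\Delta_P$, so the Local Model Proposition~\ref{prop:local_model} provides moment-map-preserving equivariant $b$-symplectomorphisms of a neighborhood of $Z$ and of a neighborhood of $Z'$ with the \emph{same} local model. Away from $Z$, I would cut each component $W\subseteq M\setminus Z$ and the matching $W'\subseteq M'\setminus Z'$ along corresponding hypersurfaces to get compact symplectic toric manifolds with the same Delzant polytope, hence equivariantly symplectomorphic by the classical Delzant theorem. The remaining step is to patch the interior symplectomorphism with the near-$Z$ one on their common collar, which has the form $(a,b)\times\SSS^1\times X_{\Delta_P}$: on that collar two equivariant moment-map-preserving symplectomorphisms differ by a symplectic vector field tangent to the fibers of the moment map, which is Hamiltonian by Proposition~\ref{prop:technicalpropfordelzant}; integrating a $\mathbb{T}^n$-averaged primitive would correct one map to the other without disturbing the moment map. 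Assembling everything yields a global equivariant $b$-symplectomorphism $M\to M'$ over $\mathrm{id}_{\mathcal{R}_\mathcal{G}}$, and when $G$ is a cycle the monodromy around the loop is handled exactly as in Theorem~\ref{thm:mmexistence}. The hard part will be precisely this injectivity patching: organizing the local models near $Z$, the classical Delzant identifications on the cut pieces, and the collar corrections coming from Proposition~\ref{prop:technicalpropfordelzant} into one coherent global $b$-symplectomorphism, together with the cycle-case monodromy bookkeeping --- I expect it to need the combined force of Theorem~\ref{equivglobalmoser}, Proposition~\ref{prop:local_model}, Proposition~\ref{prop:technicalpropfordelzant}, and the period computation from Theorem~\ref{thm:mmexistence}.
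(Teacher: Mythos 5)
Your proposal follows essentially the same route as the paper: surjectivity by taking the product of the symplectic toric manifold of the extremal polytope with a toric $b$-surface and then performing symplectic cuts away from $Z$, and injectivity by covering the polytope with local-model neighborhoods of $\mathcal{Z}_{\mathcal{G}}$ and classical pieces over the vertices, then correcting the transition maps on overlaps by Hamiltonian flows supplied by Proposition \ref{prop:technicalpropfordelzant}. The only step you gloss over is how to realize the discrepancy on an overlap as the time-one flow of a symplectic vector field tangent to the moment fibers; the paper does this via Haefliger--Salem (writing $\phi(x)=h(x)\cdot x$ for a $\mathbb{T}$-invariant $h$) together with the contractibility of the overlap's moment image to lift $h$ to $\mathfrak{t}$.
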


\begin{proof}

To prove surjectivity let $P$ be a Delzant $b$-polytope, and construct the (classic) symplectic toric manifold $(X_Z, \omega_Z, \mu_{Z}: X_Z \rightarrow \mathfrak{t}_Z^*)$ associated with the extremal polytope $\Delta_P$. Pick some $X \in \mathfrak{t}$ that pairs nontrivially with the distinguished vector in the definition of ${^b}\mathfrak{t}^*$, which induces an identification $\mathcal{R}_{\mathcal{G}} \cong \mathcal{R}_{\mathcal{G}}^{\prime} \times \mathfrak{t}_Z^*$, where $\mathcal{R}_{\mathcal{G}}^{\prime}$ is a one-dimensional $b$ moment codomain. If $G$ is a cycle, $P$ is the product of all of $\mathcal{R}_{\mathcal{G}}^{\prime}$ with $\Delta_P$; let $(\TT^2, Z_{T}, \omega_T, \mu_T: T^2 \rightarrow \mathcal{R}_{\mathcal{G}}^{\prime})$ be a $b$-symplectic toric manifold having all of $\mathcal{R}_{\mathcal{G}}^{\prime}$
as its moment map image. Then
\[
(\TT^2 \times X_Z, \omega_T \times \omega_Z, (\mu_T, \mu_Z))
\]
is a $b$-symplectic toric manifold having $P$ as the image of its moment map. If $G$ is a line, let $I$ be a closed interval in $\mathcal{R}_{\mathcal{G}}^{\prime}$ large enough that $I \times \Delta_{P} \supseteq P$. Let $(\SSS^2, Z_{S}, \omega_S, \mu_S: \SSS^2 \rightarrow \mathcal{R}_{\mathcal{G}}^{\prime})$ be a $b$-symplectic toric manifold having $I \subseteq \mathcal{R}_{\mathcal{G}}^{\prime}$ as its moment map image. Then
\[
(\SSS^2 \times X_Z, \omega_S \times \omega_Z, (\mu_S, \mu_Z))
\]
is a $b$-symplectic toric manifold having $I \times \Delta_{P}$ as the image of its moment map. By performing a sequence of symplectic cuts, we arrive at a $b$-symplectic toric manifold having $P$ as its moment map image.

The proof of injectivity is inspired by the proof of Proposition 6.4 in \cite{lertol}.
Let $(M, Z, \omega, \mu)$ and $(M', Z', \omega', \mu')$ be two $b$-symplectic toric manifolds having the same moment map image. Pick a lattice element $X \in \mathfrak{t}$ representing a generator of $\mathfrak{t}^* / \mathfrak{t}^*_w$. For each component $\{e\} \times \mathfrak{t}_Z^*$ of $\mathcal{Z}_{\mathcal{G}}$, by Proposition \ref{prop:local_model} there is an $\varepsilon > 0$ such that there is an equivariant isomorphism $\varphi_{Z_e}: \mu^{-1}(P_{Z_e}) \rightarrow \mu'^{-1}(P_{Z_e})$, where
\[
P_{Z_e} = \{-\varepsilon \leq y_{X, e} \leq \varepsilon\} \times \Delta_Z \subseteq P
\]
Similarly, for sufficiently large $N$, there exists an equivariant isomorphism $\varphi_{W_v}: \mu^{-1}(P_{W_v}) \rightarrow \mu'^{-1}(P_{W_v})$, where
$$P_{W_v} = ((-N, N)\times \mathfrak{t}_Z^*) \cap P \subseteq \{v\} \times \mathfrak{t}^* \subseteq \mathcal{R}_{\mathcal{G}},$$
we pick $N$ such that the open sets $\{P_{W_v}\} \cup \{P_{Z_e}\}$ cover $P$ as in Figure \ref{fig:subsetsofpolytopes}.

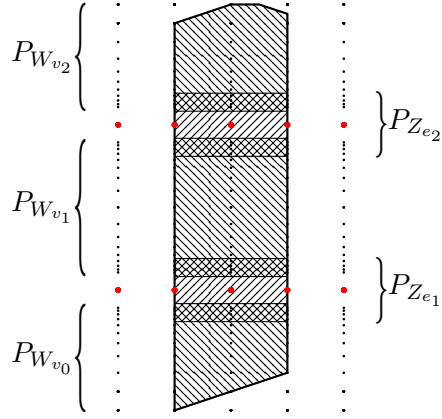
\begin{figure}[ht]
\centering
\begin{tikzpicture}
\pgfmathsetmacro{\startxone}{0}
\pgfmathsetmacro{\startxt}{6} \pgfmathsetmacro{\verlinebottom}{0}
\pgfmathsetmacro{\epsi}{0.12}
\pgfmathsetmacro{\awayfromz}{0.3}

\draw [thin, pattern = north west lines] (\startxone - 0.75, \verlinebottom + 2.2 + 0.1 + 1.25) -- (\startxone, \verlinebottom + 2.2 + 0.1 + 1.5)  -- (\startxone + 0.375, \verlinebottom + 2.2 + 0.1 + 1.5) -- (\startxone + 0.75, \verlinebottom + 2.2 + 0.1 + 1.375) -- (\startxone + 0.75, \verlinebottom + 2.2 + \awayfromz - \epsi) -- (\startxone - 0.75, \verlinebottom + 2.2 + \awayfromz - \epsi) -- cycle;

\draw [thin, pattern = north east lines] (\startxone + 0.75, \verlinebottom + 2.2 + \awayfromz + \epsi) -- (\startxone - 0.75, \verlinebottom + 2.2 + \awayfromz + \epsi) -- (\startxone - 0.75, \verlinebottom + 2.2 - \awayfromz - \epsi) -- (\startxone + 0.75, \verlinebottom + 2.2 - \awayfromz - \epsi) -- cycle;

\draw [thin, pattern = north west lines] (\startxone - 0.75, \verlinebottom + 2.2 - \awayfromz + \epsi) -- (\startxone + 0.75, \verlinebottom + 2.2 - \awayfromz + \epsi) -- (\startxone + 0.75, \verlinebottom + \awayfromz - \epsi) -- (\startxone - 0.75, \verlinebottom + \awayfromz - \epsi) -- cycle;

\draw [thin, pattern = north east lines] (\startxone + 0.75, \verlinebottom + \awayfromz + \epsi) -- (\startxone - 0.75, \verlinebottom + \awayfromz + \epsi) -- (\startxone - 0.75, \verlinebottom - \awayfromz - \epsi) -- (\startxone + 0.75, \verlinebottom - \awayfromz - \epsi) -- cycle;

\draw [thin, pattern = north west lines] (\startxone - 0.75, \verlinebottom - \awayfromz + \epsi) -- (\startxone + 0.75, \verlinebottom - \awayfromz + \epsi) -- (\startxone + 0.75, \verlinebottom - 0.1 - 1) 	-- (\startxone - 0.75, \verlinebottom - 0.1 - 1.5) -- cycle;

\draw [thick] (\startxone - 0.75, \verlinebottom + 2.2 + 0.1 + 1.25) -- (\startxone, \verlinebottom + 2.2 + 0.1 + 1.5)  -- (\startxone + 0.375, \verlinebottom + 2.2 + 0.1 + 1.5) -- (\startxone + 0.75, \verlinebottom + 2.2 + 0.1 + 1.375) -- (\startxone + 0.75, \verlinebottom - 0.1 - 1) 	-- (\startxone - 0.75, \verlinebottom - 0.1 - 1.5) -- cycle;

\node[yscale=2.3] at (1.5 + 0.5 ,\verlinebottom) {$\}$};
\node[scale=1] at (1.5 + 0.5 + 0.4,\verlinebottom) {$\ P_{Z_{e_1}}$};

\node[yscale=2.3] at (1.5 + 0.5 ,\verlinebottom + 2.2) {$\}$};
\node[scale=1] at (1.5 + 0.5 + 0.4,\verlinebottom + 2.2) {$\ P_{Z_{e_2}}$};

\node[yscale=3.75] at (-1.5 - 0.5 ,\verlinebottom - 0.9) {$\{$};
\node[scale=1] at (-1.5 - 0.5 - 0.5,\verlinebottom - 0.9) {$P_{W_{v_0}}$};

\node[yscale=4.75] at (-1.5 - 0.5 ,\verlinebottom + 1.1) {$\{$};
\node[scale=1] at (-1.5 - 0.5 - 0.5,\verlinebottom + 1.1) {$P_{W_{v_1}}$};

\node[yscale=3.75] at (-1.5 - 0.5 ,\verlinebottom + 2.2 + 0.9) {$\{$};
\node[scale=1] at (-1.5 - 0.5 - 0.5,\verlinebottom + 2.2  + 0.9) {$P_{W_{v_2}}$};

\foreach \horizshift in {-2, -1, 0, 1, 2}{
\foreach \paramt in {-2, -1.75, ..., 0.01}	
	{
	\pgfmathsetmacro{\height}{1 * exp(\paramt)}
	\pgfmathsetmacro{\squeezedhorizshift}{0.75 * \horizshift}
	\draw[fill = black] (\startxone + \squeezedhorizshift, \verlinebottom + \height + 0.1) circle(.1mm);
	\draw[fill = black] (\startxone + \squeezedhorizshift, \verlinebottom - \height - 0.1) circle(.1mm);
	\draw[fill = black] (\startxone + \squeezedhorizshift, \verlinebottom + 2.2 + \height + 0.1) circle(.1mm);
	\draw[fill = black] (\startxone + \squeezedhorizshift, \verlinebottom + 2.2 - \height - 0.1) circle(.1mm);

	\foreach \extratail in {0.25, 0.5}
		{
		\pgfmathsetmacro{\extrat}{\extratail * 1}
		\draw[fill = black] (\startxone + \squeezedhorizshift, \verlinebottom + 2.2 + 0.1 + 1 + \extrat) circle(.1mm);
		\draw[fill = black] (\startxone + \squeezedhorizshift, \verlinebottom -0.1 - 1 - \extrat) circle(.1mm);				
		}	
	\draw[red, fill = red] (\startxone + \squeezedhorizshift, \verlinebottom) circle(.3mm);
	\draw[red, fill = red] (\startxone + \squeezedhorizshift, \verlinebottom + 2.2) circle(.3mm);
	}
	}
	
\end{tikzpicture}
\caption{The subsets $P_{Z_{e_i}}$ and $P_{W_{v_i}}$ of a Delzant $b$-polytope.}
\label{fig:subsetsofpolytopes}
\end{figure}

If the equivariant isomorphisms $\varphi_{Z_{e_i}}$ and $\varphi_{W_{v_j}}$ agreed on $U_{ij} := \mu^{-1}(P_{W_{v_i}} \cap P_{Z_{e_j}})$ for all $i, j$, we could glue these isomorphisms together and the proof of injectivity would be complete. Therefore, it suffices to show for every $U_{ij}$ that there is an equivariant automorphism $\psi_{W_{v_i}}$ of $\mu^{-1}(P_{W_{v_i}})$ such that \[
\restr{\varphi_{W_{v_i}}\circ \psi_{W_{v_i}}}{U_{ij}} = \restr{\varphi_{Z_{e_j}}}{U_{ij}} \ \ \ \textrm{and} \ \ \ \restr{\varphi_{W_{v_i}}\circ \psi_{W_{v_i}}}{U_{ik}} = \restr{\varphi_{W_{v_j}}}{U_{ik}} \
\]
for $k \neq j$. Then by replacing $\varphi_{W_{v_i}}$ with $\varphi_{W_{v_i}} \circ \psi_{W_{v_i}}$, the isomorphisms $\varphi_{Z_{e_i}}$ and $\varphi_{W_{v_j}}$ can be glued. Repeating this process for each $U_{ij}$ gives the desired global equivariant isomorphism.

Let $\phi$ be the automorphism of  $U_{ij}$ given by $\varphi_{W_{v_i}}^{-1} \circ \varphi_{Z_{e_j}}$. We must extend this automorphism to an automorphism of $\mu^{-1}(P_{W_{v_i}})$ which is the identity outside an arbitrarily small neighborhood of $U_{ij}$. Notice that $\phi$ is a $\TT$-equivariant symplectic diffeomorphism that preserves orbits. Therefore, by Theorem 3.1 in \cite{haesal}, there exists a smooth $\TT$-invariant map $h: U_{ij} \rightarrow \TT^{n}$ such that $\phi(x) = h(x) \cdot x$. By the $\TT$-invariance of $h$ and the contractibility of $\mu(U_{ij}) = P_{W_{v_i}} \cap P_{Z_{e_j}}$, there is a map $\theta: U_{ij} \rightarrow \mathfrak{t}$ such that $\textrm{exp} \circ \theta = h$. Define the vector field $X_{\theta}$ to be $X_{\theta}(x) = \restr{\frac{d}{ds}}{s = 0} \textrm{exp}(s\theta(x))\cdot x$. Observe that $X_{\theta}$ is a symplectic vector field whose time one flow is the symplectomorphism $\phi$. By Proposition \ref{prop:technicalpropfordelzant}, the vector field is Hamiltonian. Pick an $\hat{f}$ such that $d\hat{f} = \iota_{X_{\theta}}\omega$. Extend $\hat{f}$ to be a function $f$ on all of $\mu^{-1}(P_{W_{v_i}})$ that is locally constant outside a small neighborhood of $U_{ij}$. Then the time-1 flow of the Hamiltonian vector field corresponding to $f$ will be the desired symplectic automorphism of $\mu^{-1}(P_{W_{v_i}})$.
\end{proof}

\begin{remark}
The proof of surjectivity in Theorem \ref{thm:bDelzant} is unlike the proof of surjectivity in the classic Delzant theorem, since we do not construct the $b$-symplectic manifold globally through a symplectic cut in some large $\mathbb{C}^d$. However, we suspect that such a construction is possible by considering an appropriate extension of $\mathbb{C}^d$ similar to the extension of $\mathfrak{t}^*$ to $\mathcal{R}_{\mathcal{G}}$.
\end{remark}

\begin{remark}\label{rmk:classification in dim 2 and 4}
The moment image of a $2n$-dimensional $b$-symplectic toric manifold is represented by an $n$-dimensional polytope $P$, and the corresponding extremal polytope $\Delta_P$ is an $(n-1)$-dimensional Delzant polytope.

For $n=1$, the extremal polytope is a point, and therefore a $b$-symplectic toric surface is equivariantly $b$-symplectomorphic to either a $b$-symplectic torus $\TT^2$ or a $b$-symplectic sphere, as stated earlier in Theorem \ref{thm:surfaces}.

For $n=2$, the extremal polytope is a line segment, corresponding to a symplectic toric sphere. As a consequence, a $b$-symplectic toric $4$-manifold is equivariantly $b$-symplectomorphic to either a product $\TT^2\times\SSS^2$ of a $b$-symplectic torus with a symplectic sphere, or to a manifold obtained from the product $\SSS^2\times\SSS^2$ of two spheres, one $b$-symplectic and the other symplectic, by a series of symplectic cuts which avoid $Z$. In particular, $\CC P^2\# \overline{\CC P^2}$ can be obtained from a $b$-symplectic $\SSS^2\times\SSS^2$ with connected $Z$ via symplectic cutting and therefore can be endowed with a $b$-symplectic toric structure. Because $Z$ was connected (in fact, it would suffice for $Z$ to have an odd number of connected components), there will be fixed points in both the portion of the manifold with positive orientation and in the one with negative orientation. Blowing up these fixed points (each such blow up destroys one fixed point and creates two new ones with the same orientation) corresponds to performing connect sum with either $\CC P^2$ or $\overline{\CC P^2}$, according to the orientation. Therefore, any $m\CC P^2\# n\overline{\CC P^2}$, with $m,n\geq1$ can be endowed with $b$-symplectic toric structures (compare with \cite[Fig.\ 1, Cor.\ 5.2]{cavalcanti}). Observe that $\CC P^2\# n\overline{\CC P^2}$, with $n\geq1$ admits both symplectic and $b$-symplectic toric structures.

\end{remark}

\begin{remark}
The surprising consequence of the Delzant classification theorem for $b$-symplectic toric manifolds is that the existence of a $b$-symplectic toric structure is highly restrictive. Indeed, every $b$-symplectic toric manifold is either the product of a $b$-symplectic $\mathbb{T}^2$ with a classic symplectic toric manifold, or it can be obtained from the product of a $b$-symplectic $\mathbb{S}^2$ with a classic symplectic toric manifold by a sequence of symplectic cuts performed at the north and south ``polar caps'', away from the exceptional hypersurface $Z$.

\end{remark}

\begin{remark}\label{rmk:anotherway}

The procedure of reduction by stages provides a different perspective on Delzant $b$-polytopes and on how to reconstruct toric $b$-symplectic manifolds from them. It can be shown that the reduction of $M$ by the action of the subtorus $\mathbb T^{n-1}_Z$ at a regular point is a $2$-dimensional $b$-symplectic manifold with a natural Hamiltonian $\mathbb{S}^1$-action. Theorem \ref{thm:surfaces} exactly classifies toric $b$-surfaces by data that can be encoded in their one-dimensional moment $b$-polytope. Therefore, a Delzant $b$-polytope can be viewed as a fibration over a classic Delzant polytope $\Delta_Z \subseteq \mathfrak{t}_Z^*$ whose fibers are one-dimensional $b$-polytopes, and the corresponding $b$-symplectic toric manifold can be reconstructed as a fibration over the classic symplectic toric manifold corresponding to $\Delta_Z$, with fibers the $b$-symplectic toric surfaces determined by the one-dimensional $b$-polytopes.
\end{remark}


\end{document}